\documentclass{amsart}

\setlength{\textwidth}{\paperwidth}
\addtolength{\textwidth}{-2in}
\calclayout
%% Language and font encodings
\usepackage[english]{babel}
 \usepackage{tikz}
\usepackage[utf8x]{inputenc}
\usepackage[T1]{fontenc}
%\usepackage[hyperref=false,style=alphabetic,citestyle=alphabetic,backend=bibtex,maxnames=100,doi=false,isbn=false,url=false,giveninits=true]{biblatex}
%\bibliography{bibli}

%% Sets page size and margins
\usepackage[a4paper,top=3cm,bottom=2cm,left=3cm,right=3cm,marginparwidth=1.75cm]{geometry}
% \makeatletter
% \def\keywords{\xdef\@thefnmark{}\@footnotetext}
% \makeatother
%% Useful packages
\usepackage{amsmath,amssymb,amsthm}
\usepackage{graphicx}
\usepackage[colorlinks=true, allcolors=blue]{hyperref}

\theoremstyle{plain}                    %stile corsivo
\newtheorem{teo}{Theorem}[section]      %definizione ambiente teorema
\newtheorem{prop}[teo]{Proposition}    %definizione ambiente proposizione
\newtheorem{cor}[teo]{Corollary}       %definizione ambiente corollario
\newtheorem{lem}[teo]{Lemma}    
        %definizione ambiente lemma
\theoremstyle{definition}               %stile roman
\newtheorem{defin}{Definition}[section]%definizione ambiente definizione
\newtheorem{ese}{Example}[section]      %definizione ambiente esempio
\theoremstyle{remark}                   %stile per osservazioni
\newtheorem{oss}{Remark}[section]       %definizione ambiente osservazione

\newcommand{\longdownarrow}{\lower 1.4ex\hbox{\begin{picture}(18,18)(0,0)
\thicklines
\put(0,18){\vector(0,-1){18}}
\end{picture}}}
\newcommand{\longsearrow}{\lower 1.4ex\hbox{\begin{picture}(18,18)(0,0)
\thicklines
\put(0,18){\vector(1,-1){18}}
\end{picture}}}
\newcommand{\longeearrow}{\lower 1.4ex\hbox{\begin{picture}(0,0)(9,9)
\thicklines
\put(0,18){\vector(1,0){18}}
\end{picture}}}

\title{The Rumin complex on nilpotent Lie groups}
\author{Francesca Tripaldi}
%\date{}

\begin{document}
\subjclass[2010]{ 22E25, 57R19 , 58A15}
\keywords{ Rumin complex, nilpotent Lie groups, subRiemannian spaces,
  $\ell^{p}$-cohomology}
\begin{abstract}
   In this paper an alternative definition of the Rumin complex $(E_0^\bullet,d_c)$ is presented, one that relies on a different concept of weights of forms. In this way, the Rumin complex can be constructed on any nilpotent Lie group equipped with a Carnot-Carath\'eodory metric. Moreover, this construction allows for the direct application of previous non-vanishing results of $\ell^{q,p}$ cohomology to all nilpotent Lie groups that admit a positive grading.
\end{abstract}
\maketitle

\tableofcontents

\section{Introduction}

SubRiemannian structures have been largely studied in several aspects of pure as well as applied mathematics, such as geometric measure theory, subelliptic differential equations, differential geometry, complex variables, optimal control theory, mathematical models in neurosciences, and robotics. 
Roughly speaking, a subRiemannian structure on a manifold $M$ is defined by a subbundle $H$ of the tangent bundle $TM$, that defines the ``admissible'' directions at any point $p$ on $M$. Usually $H$ is referred to as the \textit{horizontal} bundle. If we endow each fibre $H_p$ of $H$ with a scalar product, there is a naturally associated Carnot-Carath\'eodory (CC) distance $d_{CC}$ on $M$, defined as the Riemannian length of the horizontal curves on $M$, that is those curves $\gamma(t)$ such that $\gamma'(t)\in H_{\gamma(t)}$. 

One of the most studied categories among subRiemannian spaces is that of Carnot groups, that is stratifiable nilpotent Lie groups with a CC-distance. Their characteristic group dilations are strictly linked to the stratification of their Lie algebra, and endow them with an intrinsic notion of homogeneity.
Carnot groups appear in several mathematical contexts, such as in harmonic analysis, in the study of hypoelliptic differential operators, and as boundaries of strictly pseudo-convex complex domains, see the books \cite{Stein:book, Capogna-et-al} as initial references.

In particular, the study of hypoelliptic Laplacians on differential forms in Carnot groups has flourished in recent years. The classical approach cannot be directly applied to the usual de Rham complex $(\Omega^\bullet,d)$ since in general it is not homogeneous with respect to the group dilations. This problem can be avoided by considering a homotopically equivalent subcomplex $(E_0^\bullet,d_c)$ known as the Rumin complex, which better reads the homogeneity of the underlying Carnot group. We refer to \cite{BFP1,pansu2019averages,BFP2,BFP3,BFP4,pansu-rumin} for the main results relating to hypoelliptic Laplacians on Carnot groups.

In this paper we extend the construction of such subcomplex $(E_0^\bullet,d_c)$ to arbitrary nilpotent Lie groups (not necessarily stratifiable) equipped with a CC-metric.
% They also
% appear in geometric group theory as asymptotic cones of nilpotent finitely generated groups, see
% \cite{Gromov1, Pansu}.

\section{Notation}

Throughout the paper we will denote as $G$ an arbitrary nilpotent Lie group of dimension $n$, and by $\mathfrak{g}$ its Lie algebra. We will introduce an inner product $\langle\cdot,\cdot\rangle$ on $\mathfrak{g}$ and we will denote by $\mathfrak{g}^\ast$ the dual space of $\mathfrak{g}$, the vector space of linear functionals on the elements of $\mathfrak{g}$. We will use both the following notations to denote the duality product, that is the action of a linear functional $\theta\in\mathfrak{g}^\ast$ on an element $X\in\mathfrak{g}$:
\begin{align*}
    \langle\theta\mid X\rangle:=\theta(X)\,.
\end{align*}

Given $\lbrace X_1,\ldots,X_n\rbrace$ an orthonormal basis for $\mathfrak{g}$, we can consider a particular basis $\lbrace\theta_1,\ldots,\theta_n\rbrace$ of $\mathfrak{g}^\ast$, the so-called dual basis for which
\begin{align*}
    \theta_i(X_j)=\langle \theta_i\mid X_j\rangle=\delta_{ij}\;,\;\forall\,i,j=1,\ldots,n\,.
\end{align*}
We will denote the dual correspondence as $\theta_i=X_i^\ast$. Moreover, we can introduce an inner product, also denoted as $\langle\cdot,\cdot\rangle$ on $\mathfrak{g}^\ast$ such that the dual
basis above will be an orthonormal basis.

Given a $k$-dimensional vector space $V$ with basis $\lbrace v_1,\ldots,v_k\rbrace$, we will denote by $V^\ast$ the dual space of $V$, and the exterior algebras of $V$ and $V^\ast$ will be given respectively as
\begin{align*}
    \Lambda^\bullet V=\bigoplus_{h=1}^k\Lambda^hV\;\text{ and } \Lambda^\bullet V^\ast=\bigoplus_{h=1}^k\Lambda^hV^\ast\,,
\end{align*}
where
\begin{align*}
    \Lambda^hV=&span\lbrace v_{i_1}\wedge\cdots\wedge v_{i_h}\mid 1\le i_1<\cdots<i_h\le k\rbrace\,,\\
    \Lambda^hV^\ast=&span\lbrace v^\ast_{i_1}\wedge\cdots\wedge v_{j_h}^\ast\mid 1\le j_1<\cdots<j_h\le k\rbrace\,.
\end{align*}
Moreover, the inner products on $\mathfrak{g}$ and $\mathfrak{g}^\ast$ extend canonically to $\Lambda^h\mathfrak{g}$ and $\Lambda^h\mathfrak{g}^\ast$ for $1\le h\le n$, making their bases orthonormal too, and they will both be denoted as $\langle\cdot,\cdot\rangle$.

Let us briefly recall the terminology for stratifications and gradings for a nilpotent Lie group $G$.
Given two subspaces $V, W$ of a Lie algebra $\mathfrak{g}$, we set
$
[V,W] :=  span\{[X,Y]\mid  X\in V,\ Y\in W\} .
$
 A \textit{stratification of step $s$} of the Lie algebra $\mathfrak{g}$ is a direct-sum decomposition
$$\mathfrak{g} = V_1 \oplus V_2 \oplus \cdots \oplus V_s
  \label{def:stratifiable-algebras}
  $$
for some integer $s\geq 1$, where $[V_1,V_j] = V_{j+1}$ for all integers $j\in \{1,\dots,s\}$, $V_s \not= \{0\}$ and $V_{s+1} = \{0\}$. 
%We call $s$ the step of the stratification. 
We say that a Lie algebra is \textit{stratifiable} if there exists a stratification of it.

Nilpotent Lie groups $G$ whose Lie algebra $\mathfrak{g}$ is stratifiable are also called {\em Carnot groups}.
%called the {\em associated stratification}
 We should stress that in the case of a stratifiable algebra, the choice of a stratification is essentially unique, that is every two stratifications of $\mathfrak{g}$ differ by a Lie algebra automorphism of $\mathfrak{g}$ (for a reference see \cite[Proposition~2.17]{ledonne_primer}).

A  stratification is a particular example of a positive grading. Indeed, it is a positive grading where the first layer $V_1$ is Lie generating.
  A \emph{positive grading} of a Lie algebra $\mathfrak{g}$ is a
family $(V_t)_{t\in (0,+\infty)}$ of linear subspaces of $\mathfrak{g}$, where all but finitely many of the $V_t$s are $\{0\}$, and such that
$$\mathfrak{g}=  \bigoplus_{t\in (0,+\infty)} V_t \;\;,\;\;\text{ with }[V_t, V_u]\subset V_{t+u} \;  \text{ for all } t,u >0.$$
We say that a Lie algebra $\mathfrak{g}$ is \emph{positively gradable} if there exists a positive grading of it. 

Finally, we say that a Lie algebra is \textit{non-gradable} (or more precisely \textit{non-positively gradable}) if it does not admit any positive gradings.

One should notice that up to dimension 4 all nilpotent Lie algebras are stratifiable, whereas the first examples of non-gradable Lie algebras already appear in dimension 7 (see \cite{finnish,finnish2} for a reference).

\section{An overview of the previous constructions of the Rumin complex}

The starting point for our considerations is the hope to construct a complex of \textit{intrinsic forms} on all nilpotent Lie groups that mirrors the one constructed by Rumin for Carnot groups. 

The Rumin complex was first developed in \cite{rumin1990complexe,rumin1994} as a better choice than the usual de Rham complex in the case of Heisenberg groups $\mathbb{H}^{2n+1}$, especially regarding the homogeneity with respect to the group dilations.

In this first construction on Heisenberg groups, the intrinsic forms were defined by using two  differential ideals:
\begin{itemize}
    \item $\mathcal{I}^\bullet=\lbrace \gamma_1\wedge\theta+\gamma_2\wedge d\theta\mid \gamma_1,\gamma_2\in\Omega^\bullet\rbrace$ the differential ideal generated by the contact form $\theta$, and
    \item $\mathcal{J}^\bullet=\lbrace \beta\in\Omega^\bullet\mid \beta\wedge\theta=\beta\wedge d\theta=0\rbrace$ the annihilator of $\mathcal{I}^\bullet$.
\end{itemize} 

Moreover, since we are working on a contact manifold, we can also consider the map
\begin{align*}
    L\colon \Lambda^k Q^\ast&\to\Lambda^{k+2}Q^\ast\\
    \alpha\;&\mapsto d\theta\wedge \alpha\,,
\end{align*}
where $Q\subset T\mathbb{H}^{2n+1}$ is a subbundle by hyperplanes such that $Ker\,\theta=Q$ and $d\theta\vert_Q$ does not vanish. By exploiting the results on the map $L$ that stem from K\"ahler geometry, one can show that in $\mathbb{H}^{2n+1}$ one has
\begin{align*}
    \Omega^k/\mathcal{I}^k=0\text{ for }k\ge n+1\;,\text{ and }\mathcal{J}^k=0\text{ for }k\le n\,.
\end{align*}

One can then consider the two complexes $(\Omega^\bullet/\mathcal{I}^\bullet,d_Q)$ and $(\mathcal{J}^\bullet,d_Q)$ on $\mathbb{H}^{2n+1}$, where the differential $d_Q$ is simply the usual exterior differential $d$ that descends to the quotients $\Omega^\bullet/\mathcal{I}^\bullet$, and restricts to the subspaces $\mathcal{J}^\bullet$ respectively:
\begin{align*}
    &\Omega^0/\mathcal{I}^0&\xrightarrow[]{d_Q}&\Omega^1/\mathcal{I}^1&\xrightarrow[]{d_Q}&\cdots&\xrightarrow[]{d_Q}&\Omega^{n}/\mathcal{I}^n&\xrightarrow[]{d_Q}&\quad \,0&\xrightarrow[]{d_Q}&\quad \,0&\xrightarrow[]{d_Q}&\cdots&\xrightarrow[]{d_Q}&\quad \,0\\
    & \quad \,0&\xrightarrow[]{d_Q}&\quad \,0&\xrightarrow[]{d_Q}&\cdots&\xrightarrow[]{d_Q}&\quad \,0&\xrightarrow[]{d_Q}&\mathcal{J}^{n+1}&\xrightarrow[]{d_Q}&\mathcal{J}^{n+2}&\xrightarrow[]{d_Q}&\cdots&\xrightarrow[]{d_Q}&\mathcal{J}^{2n+1}\,.
\end{align*}

By exploiting again the properties of the application $L$, one can further construct a second order differential operator, which Rumin denotes as $D$, that links the non-trivial part of the complex $(\Omega^\bullet/\mathcal{I}^\bullet,d_Q)$ to the non-trivial part of the complex $(\mathcal{J}^\bullet,d_Q)$, hence obtaining a new complex of \textit{intrinsic forms} that has the same cohomology as the de Rham complex $(\Omega^\bullet,d)$:
\begin{align}\label{Heisenberg complex}
\Omega^0/\mathcal{I}^0\xrightarrow{d_Q}\Omega^1/\mathcal{I}^1\xrightarrow{d_Q}\cdots\xrightarrow{d_Q}\Omega^n/\mathcal{I}^n\xrightarrow{D}\mathcal{J}^{n+1}\xrightarrow{d_Q}\mathcal{J}^{n+2}\xrightarrow{d_Q}\cdots\xrightarrow{d_Q}\mathcal{J}^{2n+1}\,.
\end{align}

Since these differential ideals create a filtration on smooth forms $0\subset \mathcal{J}^\bullet\subset\mathcal{I}^\bullet\subset\Omega^\bullet$ which is stable under $d$, that is $d(\mathcal{J}^\bullet)\subset\mathcal{J}^\bullet$ and $d(\mathcal{I}^\bullet)\subset\mathcal{I}^\bullet$, one can apply the machinery of spectral sequences to this filtration. Julg in \cite{julg1995complexe} studies, among other things, this spectral sequence on $\mathbb{H}^{2n+1}$ and shows how the non-trivial differentials $d_0$ on the zeroth page quotients $E_0^{k,1}=\Omega^{k}/\mathcal{I}^{k}$ for $k< n$, and $E_0^{k,2}=J^{k}$ for $k>n$ coincide with the first order operators $d_Q$. Moreover, the only non-trivial differential on the second page quotients $d_2\colon E_2^{n,1}\to E_2^{n+1,2}$ coincides with the second order differential operator $D$.

Let us stress that spectral sequences can be used on any filtration of smooth forms that is stable under $d$, in order to compute the (graded part of the) de Rham cohomology of the manifold. 

Since the filtration by differential ideals $\mathcal{I}^\bullet$ and $\mathcal{J}^\bullet$ can only be extended to 2-step nilpotent Lie groups (see \cite{julg1995complexe} for a generalisation of this construction), Rumin introduces a new filtration by weights, still stable under $d$, which can be applied to all Carnot groups $G$ (see \cite{rumin2000around,rumin2000sub,rumin2005Palermo}). 

It should be noticed that in Rumin's construction the concept of weight is strictly linked to the positive grading given by the algebra's stratification
\begin{align*}
    \mathfrak{g}=V_1\oplus\cdots\oplus V_s\,.
\end{align*}
For $\lambda>0$, the dilation of factor $\lambda$ on $\mathfrak{g}$ relative to the associated stratification, is the unique linear map $\delta_{\lambda}\colon \mathfrak{g}\to\mathfrak{g}$ such that $\delta_\lambda(X)=\lambda^t\,X$ for any $X\in V_t$. Rumin then defines the weight function for vectors and covectors as
\begin{align*}
    w(X)=t\Longleftrightarrow X\in V_t\subset\mathfrak{g}\text{ and }w(\theta)=t\Longleftrightarrow \theta\in V_t^\ast\subset\mathfrak{g}^\ast\,,
\end{align*}
and extends it to the whole
differential algebra of $G$ as follows:
\begin{align*}
    w(\theta_1\wedge\cdots\wedge\theta_h)=p\Longleftrightarrow \sum_{i=1}^hw(\theta_i)=p\text{, for any  }\theta_1\wedge\cdots\wedge\theta_h\in\Lambda^h\mathfrak{g}^\ast\,.
\end{align*}

Once the weight function is defined on smooth forms $\Omega^\bullet$ as well, one obtains a decreasing filtration by the spaces $\mathcal{F}^p$ of forms of weight $\ge p$
\begin{align*}
    \mathcal{F}^{Q+1}=0\subset\mathcal{F}^Q\subset\mathcal{F}^{Q-1}\subset\cdots\subset\mathcal{F}^2\subset\mathcal{F}^1\subset\mathcal{F}^0=\Omega^\bullet\,,
\end{align*}
where $Q$ is the homogeneous dimension of the Carnot group $G$. By the definition of the exterior differential $d$, it is easy to check that this filtration is also stable under $d$, that is $d(\mathcal{F}^p)\subset\mathcal{F}^p$.

Following Rumin's notation in \cite{rumin2000around}, the operator $d_0$ denotes the exterior differential $d$ acting on the quotients $\mathcal{F}^p/\mathcal{F}^{p+1}$, and $E_0^\bullet=Ker\,d_0/Im\,d_0$ denotes its cohomology. In the usual notation of spectral sequences we would have had instead $E_0^{\bullet,p}=\mathcal{F}^{p}/\mathcal{F}^{p+1}$ at the zeroth page, and $E_1^{\bullet,p}=Ker\,d_0/Im\,d_0$ at the first page. Moreover, as pointed out by Rumin, $E_0^\bullet=Ker\,d_0/Im\,d_0$ is the bundle whose fibre is the
Lie algebra cohomology of the Carnot group $G$.

When applied to Heisenberg groups, the non-trivial first page quotients $E_0^\bullet$ coincide with the \textit{intrinsic forms} of the complex \eqref{Heisenberg complex}, the first page differentials $d_1\colon E_0^k\to E_0^{k+1}$ with $k\neq n$ coincide with the first order differential operators $d_Q$, and the only non-trivial second page differential $d_2\colon E_0^{n}\to E_0^{n+1}$ coincides with the second order differential operator $D$.

Unfortunately, considering this weight filtration for Carnot groups of arbitrary nilpotency step $s$ is not only complicated to handle analytically, but it also makes it impossible to think of the \textit{intrinsic forms} $E_0^\bullet=Ker\,d_0/Im\,d_0$ as a subspace of $\Omega^\bullet$.

For this reason, in \cite{rumin2005Palermo,rumin2000around} Rumin defines what is now known as the \textit{Rumin complex} $(E_0^\bullet,d_c)$ for an arbitrary Carnot group $G$. In this construction he first introduces a metric on $G$, and maintains the same notation used for the weight spectral sequence to denote the \textit{subspace} $E_0^\bullet=Ker\,d_0\cap(Im\,d_0)^\perp$ of \textit{intrinsic (Rumin) forms}. In this case the operator $d_0$, defined as the part of the differential $d$ that keeps the weight between forms constant, corresponds to the previous operator $d_0$ induced on the zeroth page of the weight spectral sequence, but it is now acting between subspaces of forms on $G$ (and not between quotients). After defining the homotopical equivalence $\Pi_E$ between $(\Omega^\bullet,d)$ and the subcomplex $(E^\bullet,d)$, and the orthogonal projection $\Pi_{E_0}$, we obtain the exact subcomplex $(E_0^\bullet,d_c=\Pi_{E_0}d\Pi_E)$ which is conjugated to $(E^\bullet,d)$. Crucial in this construction is the introduction of the operator $d_0^{-1}$, the inverse map of $d_0$, which can only be defined once we have a metric on $G$.

%The possibility .

% It should be possible to apply these same tools to construct a Rumin complex on nilpotent Lie groups whose Lie algebra is only positively gradable, however this idea has so far only been used to improve some analytic results on Carnot groups (see Section 2.2.1 in \cite{rumin2000around} and Example 9.5 in \cite{pansu-rumin}). Moreover, in order to fully extend this weight construction to non-stratifiable, yet positively gradable Lie algebras, one should first take into account the fact that there may be several non-isomorphic positive gradings on the same Lie algebra $\mathfrak{g}$ (see \cite{finnish}).

What was then left as an open question was the possibility of producing a Rumin complex on arbitrary nilpotent Lie groups, and in particular those that do not admit a stratification. This is precisely the aim of this paper (as for the adaptation of Rumin's construction to positively gradable nilpotent Lie groups, this is mentioned in Section 2.2.1 of \cite{rumin2000around}).
To avoid having to rely on gradings in our construction, we will introduce an alternative definition of weights of 1-forms, one that only depends on the lower central series of the Lie algebra $\mathfrak{g}$. We will also see that in the case of Carnot groups the two definitions of weights coincide. Finally, in order to construct the spaces $E_0^\bullet$ of \textit{intrinsic forms}  we will not use $d_0$, defined as the operator that keeps the weight of forms constant, but instead we will define $E_0^\bullet$ as $Ker\,d_\mathfrak{g}\cap(Im\,d_\mathfrak{g})^\perp$, where $d_\mathfrak{g}$ is simply the algebraic part of the exterior differential $d$. In this way we also want to stress the correspondence between $E_0^\bullet$ and the Lie algebra cohomology of the nilpotent Lie group $G$.

\section{Filtration based on the lower central series}

Let $\mathfrak{g}$ be a nilpotent Lie algebra of dimension $n$.
The lower central series $\lbrace\mathfrak{g}^{(i)}\rbrace$ is defined as follows:
\begin{align*}
    \mathfrak{g}^{(0)}=\mathfrak{g}\;\text{ and }\;\mathfrak{g}^{(i)}=[\mathfrak{g},\mathfrak{g}^{(i-1)}]\;,\;i\ge 1\,.
\end{align*}

A nilpotent Lie algebra $\mathfrak{g}$ has nilpotency step $s$ if $\mathfrak{g}^{(s)}=0$ and $\mathfrak{g}^{(s-1)}\neq 0$
. One should notice that for every nilpotent Lie algebra $\mathfrak{g}$ there exists $s\in\mathbb{N}$ such that $\mathfrak{g}^{(s)}=0$, that is the nilpotency step is finite and the lower central series stops after a finite number of consecutive brackets.

In the case of a Lie group $G$, one can consider the subcomplex of the deRham complex $(\Omega^\bullet,d)$ consisting of the left-invariant differential forms. A left-invariant $k$-form is uniquely determined by its value at the identity, where it defines a linear map $\Lambda^k\mathfrak{g}\to\mathbb{R}$, by identifying the tangent space at the identity with the Lie algebra $\mathfrak{g}$. In other words, a left-invariant $k$-form is an element of $\Lambda^k\mathfrak{g}^\ast$. The exterior derivative then induces a linear map on $\Lambda^k\mathfrak{g}^\ast$ which we will denote as $d_\mathfrak{g}$:
\begin{align*}
    d\big\vert_{\Lambda^k\mathfrak{g}^\ast}=:d_\mathfrak{g}\colon\Lambda^k\mathfrak{g}^\ast\to\Lambda^{k+1}\mathfrak{g}^\ast\,.
\end{align*}

\begin{oss}
The exterior differential applied on left-invariant forms, denoted as $
{d}_\mathfrak{g}$ 
\begin{align*}
    d_\mathfrak{g}\colon\mathfrak{g}^\ast=\Lambda^1\mathfrak{g}^\ast\to\Lambda^2\mathfrak{g}^\ast
\end{align*}
coincides with the dual mapping of the Lie bracket
\begin{align*}
    [\;,\;]\colon\Lambda^2\mathfrak{g}\to\mathfrak{g}\,.
\end{align*}

Moreover, given two left-invariant forms $\theta_i\in\Lambda^i\mathfrak{g}^\ast$ and $\theta_j\in\Lambda^j\mathfrak{g}^\ast$, we have that the usual Leibniz rule applies
\begin{align*}
    d_\mathfrak{g}(\theta_i\wedge \theta_j)=d_\mathfrak{g}\theta_i\wedge \theta_j+(-1)^i\theta_i\wedge d_\mathfrak{g}\theta_j\,.
\end{align*}
\end{oss}

Let us consider the following subspaces of $\mathfrak{g}^\ast=\Lambda^1\mathfrak{g}^\ast$:
\begin{align*}
    F_0=0\;,\;F_i=\lbrace \theta_j\in\Lambda^1\mathfrak{g}^\ast\mid d_\mathfrak{g}\theta_j\in\Lambda^2F_{i-1}\rbrace\;,\;i\ge 1\,.
\end{align*}

Notice that $F_1$ is the subspace of left-invariant closed 1-forms. Furthermore, this sequence $F_i$ of subspaces defines a filtration on $\mathfrak{g}^\ast$
\begin{align*}
    0=F_0\subset F_1\subset F_2\subset\cdots\subset F_{s-1}\subset F_{s}=\mathfrak{g}^\ast\,,
\end{align*}
where $s$ is the nilpotency step of $\mathfrak{g}$.

The spaces $F_i$ are dual to the spaces $\mathfrak{g}^{(i)}$ of the lower central series, that is for all $i\ge 0$
\begin{align*}
    F_i=\lbrace \theta_j\in\mathfrak{g}^\ast\mid \theta_j(X)=0\;,\;\forall X\in\mathfrak{g}^{(i)}\rbrace\,.
\end{align*}
\subsection{Filtration on smooth 1-forms}

All the considerations we have made for left-invariant 1-forms can be extended to smooth 1-forms, so one can achieve a filtration of smooth 1-forms too.

In fact, given $G$ a connected Lie group with Lie algebra $\mathfrak{g}$, we know we can identify the tangent space $T_xG$ to $G$ at $x\in G$ with $\mathfrak{g}$ by means of the isomorphism $dL_x$, where $L_x$ denotes the left-translation by $x$. For $\theta\in\Lambda^k\mathfrak{g}^\ast$ and $f\in C^\infty(G)$, we can regard $\theta\otimes f$ as an element of smooth $k$-forms $\Omega^k$ by $(\theta\otimes f)_x=f(x)(dL_x^{-1})\theta$. This gives rise to an isomorphism
\begin{align*}
    Hom_{\mathbb{R}}(\Lambda^k\mathfrak{g},C^\infty(G))\cong\Lambda^k\mathfrak{g}^\ast\otimes C^\infty(G)\to\Omega^k(G)\,.
\end{align*}

Now, given $k+1$ arbitrary left-invariant vector fields $X_0,\ldots,X_k\in\mathfrak{g}$, the formula for the exterior differential is given as follows
\begin{align*}
    d(\theta\otimes f)(X_0,\ldots,X_k)=&\sum_{i=0}^k(-1)^i\theta(X_0,\ldots,\hat{X}_i,\ldots,X_k)\cdot X_if\,+\\&+\sum_{0\le i<j\le k}(-1)^{i+j}\theta([X_i,X_j],X_0,\ldots,\hat{X}_i,\ldots,\hat{X}_j,\ldots,X_k)\cdot f\,.
\end{align*}

We will use the following notation 
\begin{align*}
    \Gamma(\Lambda^k\mathfrak{g}^\ast):=\Lambda^k\mathfrak{g}\otimes C^\infty(G)=\Omega^k
\end{align*}
to denote the space of $k$-forms with coefficients in $C^\infty(G)$. We will also use the same notation if we want to consider subspaces of left-invariant $k$-forms, for example
\begin{align*}
    \Gamma(F_i):=F_i\otimes C^\infty(G)
\end{align*}
where $F_i\subset\Lambda^1\mathfrak{g}^\ast$ is a subspace of left-invariant 1-forms.

% We will denote the space of smooth sections of $\Lambda^\bullet\mathfrak{g}^\ast$ as $\Gamma^\bullet\mathfrak{g}^\ast$. 

Let us express an arbitrary smooth 1-form $\alpha\in\Gamma(\Lambda^1\mathfrak{g}^\ast)=\Omega^1$ in terms of the orthonormal basis $\lbrace \theta_j\rbrace_{1\le j\le n}$ of left-invariant 1-forms in $\Lambda^1\mathfrak{g}^\ast$:
\begin{align*}
    \alpha=\sum_{j=1}^nf_j\theta_j\in\Gamma(\Lambda^1\mathfrak{g}^\ast)\,.
\end{align*}
Then the exterior differential $d$ applied to $\alpha$ will have the following expression
\begin{align*}
    d\alpha=&d\big(\sum_{j=1}^nf_j\theta_j\big)=\sum_{j=1}^nd(f_j\theta_j)=\sum_{j=1}^n\big(df_j\wedge \theta_j+f_jd\theta_j\big)\\=&\sum_{j=1}^n\big(df_j\wedge \theta_j+f_j\,d_\mathfrak{g}\theta_j\big)=\sum_{j=1}^ndf_j\wedge\theta_j+\sum_{j=1}^nf_j\,d_\mathfrak{g}\theta_j\,.
\end{align*}

\begin{defin} \textbf{The algebraic operator $\mathbf{d_\mathfrak{g}}$}

Given an arbitrary smooth 1-form $\alpha=\sum_{j=1}^nf_j\theta_j$ with $\theta_j\in\Lambda^1\mathfrak{g}^\ast$, we will then denote by $d_\mathfrak{g}\alpha$ the ``left-invariant'' part of the smooth 2-form $d\alpha$, that is
\begin{align*}
    d_\mathfrak{g}\alpha:=\sum_{j=1}^nf_j\,d_\mathfrak{g}\theta_j\,.
\end{align*}
\end{defin}

Just like before, one can construct a filtration on $\Omega^1$ using the action of $d_\mathfrak{g}$ (or, equivalently, using the lower central series):
\begin{align*}
    \Gamma(F_0)=0\;,\;\Gamma(F_i)=\lbrace \alpha\in\Gamma(\mathfrak{g}^\ast)\mid d_\mathfrak{g}\alpha\in\Gamma(\Lambda^2F_{i-1})\rbrace\;,\;i\ge 1\,.
\end{align*}

Exactly in the same way, we have the filtration
\begin{align*}
    0=\Gamma(F_0)\subset \Gamma(F_1)\subset\Gamma(F_2)\subset\cdots\subset\Gamma(F_{s-1})\subset\Gamma(F_s)=\Gamma(\Lambda^1\mathfrak{g}^\ast)\,,
\end{align*}
where $s$ is the nilpotency step of $\mathfrak{g}$.

It is trivial to check that for $i\ge 1$ we have the following equality between subspaces of $\Omega^1$
\begin{align*}
    \lbrace \alpha\in\Omega^1\mid d_\mathfrak{g}\alpha\in F_{i-1}\otimes C^\infty(G)\rbrace=F_i\otimes C^\infty(G)\,,
\end{align*}
which justifies using the notation $\Gamma(F_i)$.

\section{Redefining weights}
Just like in the construction proposed by Rumin in \cite{rumin2000around}, we would like to introduce the concept of ``pure weights'' of 1-forms. This will then allow us to produce a quantitative split of the exterior differential $d$ according to the weight increase. In Rumin's case, the weight of a 1-form is strictly linked to the homogeneity stemming from the dilations of the Carnot group considered. Since we are considering arbitrary nilpotent groups which are not necessarily gradable, we will introduce a different notion of weights linked to the lower central series. We will see that the two definitions of weights coincide when considering Carnot groups.

\begin{defin}\label{Weights of 1-forms} \textbf{Asymptotic weights of left-invariant 1-forms}

Using the scalar product $\langle\cdot,\cdot\rangle$ on $\mathfrak{g}^\ast$, we can express the space of left-invariant 1-forms as a direct sum of subspaces using the filtration $\lbrace F_i\rbrace_{i=0}^s$:
\begin{align}
    \mathfrak{g}^\ast=W_1\oplus W_2\oplus\cdots\oplus W_s\,,
\end{align}
where $W_i=F_i\cap(F_{i-1})^\perp$ for $i\ge 1$.

We say that a left-invariant 1-form $\theta\in\mathfrak{g}^\ast$ has pure asymptotic weight $k$ if $\theta\in W_k$, and we write $w(\theta)=k$.
\end{defin}

Let us stress that if $s$ is the nilpotency step of $G$, then $F_j=F_{j+1}$ for any $j\ge s$, so that $W_j=0$ for $j>s$. Moreover, for $1\le j\le s$
\begin{align*}
    F_j=W_1\oplus\cdots\oplus W_j\,.
\end{align*}

\begin{oss}
The concept of asymptotic weights extends directly to any smooth 1-form $\xi\in\Omega^1$ as follows:
\begin{align*}
    w(\xi)=k\iff \alpha\in\Gamma(W_k)=W_k\otimes C^\infty(G)\,.
\end{align*}
\end{oss}

\begin{prop}
Let $G$ be a nilpotent Lie group of dimension $n$ and nilpotency step $s$.

If $G$ is stratifiable with stratification
\begin{align*}
    \mathfrak{g}=V_1\oplus V_2\oplus\cdots\oplus V_s
\end{align*}
then $V_j^\ast=W_j$, where $W_i=F_i\cap(F_{i-1})^\perp$ for $i\ge 1$.

% If $G$ is not stratifiable, then $W_j=V_j^\ast$, where $V_j$ is a stratification of the asymptotic cone $G_\infty$ of $G$, i.e.
% \begin{align*}
%     \mathfrak{g}_\infty=V_1\oplus V_2\oplus\cdots\oplus V_s\,.
% \end{align*}
% One should point out that a non-stratifiable nilpotent Lie group $G$ and its asymptotic cone $G_\infty$ have the same nilpotency step $s$.
\end{prop}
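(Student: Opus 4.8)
The plan is to reduce everything to a single structural fact about stratified algebras — the explicit description of the lower central series in terms of the layers $V_t$ — and then to read off the equality $V_j^* = W_j$ by pure linear algebra, using the dual characterisation of the $F_i$ already recorded in the previous section.

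First I would establish, by induction on $i$, the key identity
\[
\mathfrak{g}^{(i)} = V_{i+1} \oplus V_{i+2} \oplus \cdots \oplus V_s = \bigoplus_{t > i} V_t \qquad (i \geq 0).
\]
The case $i=0$ is the stratification itself. For the inductive step I would use that a stratification is in particular a positive grading, so that $[V_a,V_b]\subseteq V_{a+b}$; combined with $\mathfrak{g}=\bigoplus_a V_a$ and the inductive hypothesis $\mathfrak{g}^{(i)}=\bigoplus_{b>i}V_b$, this gives $\mathfrak{g}^{(i+1)}=[\mathfrak{g},\mathfrak{g}^{(i)}]\subseteq\bigoplus_{t>i+1}V_t$, since $a+b\ge i+2$ whenever $a\ge 1$ and $b\ge i+1$. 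For the reverse inclusion I would invoke the defining property $[V_1,V_t]=V_{t+1}$ of a stratification: as $V_1\subseteq\mathfrak{g}$ and $V_t\subseteq\mathfrak{g}^{(i)}$ whenever $t>i$, each layer $V_{t+1}$ with $t\ge i+1$ lies in $[\mathfrak{g},\mathfrak{g}^{(i)}]=\mathfrak{g}^{(i+1)}$, and these exhaust $\bigoplus_{t>i+1}V_t$.

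Next, working with an orthonormal basis adapted to the stratification — so that each $X_i$ lies in a single layer and the subspaces $V_t^*$ of $\mathfrak{g}^*$ are pairwise orthogonal — I would feed this identity into the dual characterisation $F_i=\{\theta\in\mathfrak{g}^*\mid \theta(X)=0,\ \forall X\in\mathfrak{g}^{(i)}\}$. Since $\mathfrak{g}^{(i)}=\bigoplus_{t>i}V_t$ is spanned by the basis vectors sitting in the layers $V_t$ with $t>i$, its annihilator is exactly the span of the dual covectors attached to the remaining layers, so that
\[
F_i = V_1^* \oplus V_2^* \oplus \cdots \oplus V_i^*.
\]
Finally I would compute $W_j=F_j\cap (F_{j-1})^\perp$. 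From the previous step $F_j=V_1^*\oplus\cdots\oplus V_j^*$ and $F_{j-1}=V_1^*\oplus\cdots\oplus V_{j-1}^*$; by the orthogonality of the layers, $(F_{j-1})^\perp=V_j^*\oplus V_{j+1}^*\oplus\cdots\oplus V_s^*$, and intersecting the two decompositions yields $W_j=V_j^*$.

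I expect the only genuine content to be the inductive identity $\mathfrak{g}^{(i)}=\bigoplus_{t>i}V_t$ in the first step; once that is in place, the remaining manipulations are routine bookkeeping over the dual basis. The one point requiring care is the compatibility of the inner product with the stratification: the clean splitting $(F_{j-1})^\perp=\bigoplus_{t\ge j}V_t^*$ relies on the layers being mutually orthogonal, so I would either fix the metric to be adapted to the stratification or explicitly note that the statement is understood with respect to such a choice of orthonormal basis.
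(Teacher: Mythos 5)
Your proof is correct and follows essentially the same route as the paper's: dualize the lower central series via the annihilator description of the $F_i$, and read off $W_j=V_j^\ast$ by orthogonality with respect to an adapted basis. The only differences are presentational — you prove the identity $\mathfrak{g}^{(i)}=\bigoplus_{t>i}V_t$ explicitly by induction and work throughout in $\mathfrak{g}^\ast$ (giving $F_i=V_1^\ast\oplus\cdots\oplus V_i^\ast$ directly), whereas the paper works with the dual subspaces $F_k^\ast\subset\mathfrak{g}$, leaves that identity implicit in ``the definition of a stratification,'' and handles your metric-compatibility caveat by fixing an orthonormal basis adapted to the lower central series.
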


\begin{proof}
Let us consider an orthonormal basis of $\mathfrak{g}$ adapted to the lower central series so that the following direct sum decomposition holds:
\begin{align*}
    \mathfrak{g}=\big(\mathfrak{g}^{(0)}\cap(\mathfrak{g}^{(1)})^\perp\big)\oplus\big(\mathfrak{g}^{(1)}\cap(\mathfrak{g}^{(2)})^\perp\big)\oplus\cdots\oplus\underbrace{\big(\mathfrak{g}^{(s)}\cap(\mathfrak{g}^{(s+1)})^\perp\big)}_{\mathfrak{g}^{(s)}=0}=\bigoplus_{i=0}^{s-1}\mathfrak{g}^{(i)}\cap (\mathfrak{g}^{(i+1)})^\perp\,.
\end{align*}

Let us first prove that $\mathfrak{g}^{(0)}\cap(\mathfrak{g}^{(1)})^\perp=F_1^\ast=W_1^\ast$:
\begin{align*}
    X\in\mathfrak{g}^{(0)}\cap(\mathfrak{g}^{(1)})^\perp&\Longleftrightarrow \langle X,Y\rangle=0\;\forall\;Y\in\bigoplus_{i=1}^{s-1}\mathfrak{g}^{(i)}\cap (\mathfrak{g}^{(i+1)})^\perp=\mathfrak{g}^{(1)}\\
    &\Longleftrightarrow \langle X^\ast\mid Y\rangle=X^\ast(Y)=0\;\forall\;Y\in\mathfrak{g}^{(1)}\\
    &\Longleftrightarrow X^\ast\in F_1=W_1\,.
\end{align*}

Let us prove that $\big(\mathfrak{g}^{(0)}\cap(\mathfrak{g}^{(1)})^\perp\big)\oplus\big(\mathfrak{g}^{(1)}\cap(\mathfrak{g}^{(2)})^\perp\big)=F_2^\ast$:
\begin{align*}
    X\in\big(\mathfrak{g}^{(0)}\cap(\mathfrak{g}^{(1)})^\perp\big)\oplus\big(\mathfrak{g}^{(1)}\cap(\mathfrak{g}^{(2)})^\perp\big)&\Longleftrightarrow \langle X,Y\rangle=0\;\forall\;Y\in\bigoplus_{i=2}^{s-1}\mathfrak{g}^{(i)}\cap (\mathfrak{g}^{(i+1)})^\perp=\mathfrak{g}^{(2)}\\
    &\Longleftrightarrow \langle X^\ast\mid Y\rangle=X^\ast(Y)=0\;\forall\;Y\in\mathfrak{g}^{(2)}\\
    &\Longleftrightarrow X^\ast\in F_2\,.
\end{align*}

In general we have $\bigoplus_{i=0}^{k-1}\mathfrak{g}^{(i)}\cap(\mathfrak{g}^{(i+1)})^\perp=F_k^\ast$:
\begin{align*}
    X\in\bigoplus_{i=0}^{k-1}\mathfrak{g}^{(i)}\cap(\mathfrak{g}^{(i+1)})^\perp&\Longleftrightarrow \langle X,Y\rangle=0\;\forall\;Y\in\bigoplus_{i=k}^{s-1}\mathfrak{g}^{(i)}\cap (\mathfrak{g}^{(i+1)})^\perp=\mathfrak{g}^{(k)}\\
    &\Longleftrightarrow \langle X^\ast\mid Y\rangle=X^\ast(Y)=0\;\forall\;Y\in\mathfrak{g}^{(k)}\\
    &\Longleftrightarrow X^\ast\in F_k\,.
\end{align*}

Therefore, by definition of the subspaces $W_i$ of left-invariant 1-forms we have:
\begin{align*}
    W_k^\ast=&\big(F_k\cap(F_{k-1})^\perp\big)^\ast=F_k^\ast\cap(F_{k-1}^\ast)^\perp\\=&\bigoplus_{i=0}^{k-1}\mathfrak{g}^{(i)}\cap(\mathfrak{g}^{(i+1)})^\perp\cap\big(\bigoplus_{i=0}^{k-2}\mathfrak{g}^{(i)}\cap(\mathfrak{g}^{(i+1)})^\perp\big)^\perp=\mathfrak{g}^{(k-1)}\cap(\mathfrak{g}^{(k-2)})^\perp\,.
\end{align*}

The proposition then follows directly from the definition of a stratification for a Carnot group.
\end{proof}

\begin{cor}
The two definitions of pure weights of 1-forms in Rumin \cite{rumin2000around} and  asymptotic weights in Definition \ref{Weights of 1-forms} coincide in the case of stratifiable nilpotent Lie groups.
\end{cor}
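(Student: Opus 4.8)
The plan is to deduce the Corollary directly from the Proposition just proved, which already establishes the layer-by-layer identification $V_j^\ast = W_j$. The only work remaining is to unwind the two definitions of weight and verify that each is governed by the very same direct-sum decomposition of $\mathfrak{g}^\ast$.

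First I would recall Rumin's definition in the stratified setting: given the stratification $\mathfrak{g} = V_1 \oplus \cdots \oplus V_s$, a left-invariant $1$-form $\theta$ has pure weight $k$ precisely when $\theta \in V_k^\ast$, where $V_k^\ast$ is regarded as a subspace of $\mathfrak{g}^\ast$ through the orthogonal splitting $\mathfrak{g}^\ast = V_1^\ast \oplus \cdots \oplus V_s^\ast$ dual to the stratification. On the other hand, Definition \ref{Weights of 1-forms} assigns to $\theta$ the asymptotic weight $k$ precisely when $\theta \in W_k = F_k \cap (F_{k-1})^\perp$.

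Then I would simply invoke the Proposition, which yields $V_k^\ast = W_k$ for every $k = 1,\dots,s$. Consequently the subspace of pure-weight-$k$ forms in Rumin's sense coincides with the subspace of asymptotic-weight-$k$ forms. Since both weight functions are defined by reading off the component of a form in the respective direct-sum decompositions $\mathfrak{g}^\ast = \bigoplus_k V_k^\ast$ and $\mathfrak{g}^\ast = \bigoplus_k W_k$ — which are now seen to be one and the same decomposition — every left-invariant $1$-form is assigned identical weights by the two constructions, and the same conclusion extends to arbitrary smooth $1$-forms after tensoring with $C^\infty(G)$.

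The step I expect to require the most care is purely bookkeeping: confirming that Rumin's symbol $V_k^\ast$ genuinely denotes the copy of the dual of $V_k$ sitting inside $\mathfrak{g}^\ast$ under the inner-product identification, and not some abstract dual space, so that the equality $V_k^\ast = W_k$ supplied by the Proposition is read as an equality of \emph{subspaces of} $\mathfrak{g}^\ast$. Once this identification is made explicit there is no genuine obstacle, and the Corollary becomes an immediate translation of the Proposition into the language of weights.
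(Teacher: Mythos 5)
Your proposal is correct and matches the paper's (implicit) argument: the paper states this corollary without a separate proof, treating it as an immediate consequence of the preceding proposition $V_j^\ast=W_j$, which is exactly what you do by unwinding the two weight definitions as the components of the decompositions $\mathfrak{g}^\ast=\bigoplus_k V_k^\ast$ and $\mathfrak{g}^\ast=\bigoplus_k W_k$ and invoking their equality. Your added remark about reading $V_k^\ast$ as a subspace of $\mathfrak{g}^\ast$ (rather than an abstract dual) is the right bookkeeping point and is consistent with how the paper uses the notation.
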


\begin{oss}
By definition, the Lie algebra $\mathfrak{g}_\infty$ of the asymptotic cone $G_\infty$ of a non-stratifiable nilpotent group $G$ is given by
\begin{align*}
    \mathfrak{g}_\infty=\mathfrak{g}^{(0)}/\mathfrak{g}^{(1)}\oplus\mathfrak{g}^{(1)}/\mathfrak{g}^{(2)}\oplus\cdots\oplus\mathfrak{g}^{(s-1)}/\mathfrak{g}^{(s)}\,.
\end{align*}
The asymptotic cone is a Carnot group with stratification
\begin{align*}
    \mathfrak{g}_{\infty}=V_1^\infty\oplus\cdots\oplus V_s^\infty\,.
\end{align*}
From the proof of the previous proposition, it is clear that there is a correspondence between the decomposition $W_1\oplus\cdots\oplus W_s$ of left-invariant forms given by the lower central series of $G$, and the dual of the stratification $V_1^\infty\oplus\cdots\oplus V_s^\infty$ of $\mathfrak{g}_\infty$, hence the terminology \textit{asymptotic weights}.
\end{oss}

% \begin{defin}
% \textbf{Weights of 1-forms}

% Given the direct sum decomposition of left-invariant forms given by the lower central series
% \begin{align*}
%     \mathfrak{g}^\ast=W_1\oplus W_2\oplus\cdots \oplus W_s
% \end{align*}
% we say that a left-invariant 1-form $\theta\in\Lambda^1\mathfrak{g}^\ast$ has weight $k$ if $\theta\in W_k$, and we write $w(\theta)=k$.
\begin{defin}\textbf{Asymptotic weights of arbitrary smooth forms.}

In general, given a left-invariant $h$-form $\alpha\in\Lambda^h\mathfrak{g}^\ast$, we will say that it has pure asymptotic weight $p$ if it can be expressed as a linear combination of left-invariant $h$-forms $\theta_{i_1}\wedge\cdots\wedge\theta_{i_h}$ such that $w(\theta_{i_1})+\cdots+w(\theta_{i_h})=p$.

Just like in Definition \ref{Weights of 1-forms}, we can extend the concept of asymptotic weights to any smooth $h$-form $\xi\in\Omega^h$.

\end{defin}

%\end{defin}

% \begin{oss}
% In the case of non-stratifiable nilpotent Lie groups, we are only saying that left-invariant 1-forms have the same weight as the 1-forms on the asymptotic cone, but we are not constructing the Rumin complex on the asymptotic cone.
% \end{oss}
% \begin{oss}
% I will be following the steps of my masters thesis from now on. 
% \end{oss}

\begin{lem}
Let $\alpha\in\Lambda^h\mathfrak{g}^\ast$ be a left-invariant differential form, then $d_{\mathfrak{g}}\alpha=d\alpha$ is still a left-invariant form.
\end{lem}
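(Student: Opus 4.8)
The plan is to deduce the left-invariance of $d\alpha$ from the naturality of the exterior differential, and then to recognise $d\alpha$ as its algebraic part $d_\mathfrak{g}\alpha$ by observing that a left-invariant form has constant coefficients in the left-invariant coframe. First I would recall that $\alpha$ is left-invariant if and only if $L_x^\ast\alpha=\alpha$ for every $x\in G$, where $L_x$ denotes left-translation by $x$; this is precisely the identification of $\alpha$ with its value at the identity, i.e. with an element of $\Lambda^h\mathfrak{g}^\ast$. Since each $L_x$ is a diffeomorphism and the exterior differential commutes with pullbacks, $d(\phi^\ast\beta)=\phi^\ast(d\beta)$, one obtains
\begin{align*}
    L_x^\ast(d\alpha)=d(L_x^\ast\alpha)=d\alpha\qquad\text{for all }x\in G\,,
\end{align*}
which is exactly the statement that $d\alpha$ is left-invariant, so that $d\alpha\in\Lambda^{h+1}\mathfrak{g}^\ast$ and the restriction $d_\mathfrak{g}=d\vert_{\Lambda^h\mathfrak{g}^\ast}$ is well defined.

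It then remains to check the equality $d_\mathfrak{g}\alpha=d\alpha$, which is most transparent from the intrinsic formula for $d$ recalled earlier in the excerpt. Evaluating $d\alpha$ on left-invariant vector fields $X_0,\ldots,X_h$, the coefficients of $\alpha$ in the orthonormal coframe of left-invariant $h$-forms are constant functions, so every directional-derivative term of the shape $X_if$ vanishes and only the purely algebraic terms involving the brackets $[X_i,X_j]$ survive. These are precisely the terms retained by the algebraic operator $d_\mathfrak{g}$, whence $d\alpha=d_\mathfrak{g}\alpha$.

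The single point worth isolating — and the only place where the Lie structure enters — is that the bracket of two left-invariant vector fields is again left-invariant. This guarantees that $\alpha([X_i,X_j],X_0,\ldots,\hat X_i,\ldots,\hat X_j,\ldots,X_h)$ is a constant function, so that the surviving expression pairs with a left-invariant frame to produce constants rather than genuine functions, consistently with the first step. I therefore expect no genuine obstacle: the lemma is the standard fact that $d$ preserves left-invariance, repackaged so that its left-invariant output is seen to coincide with the algebraic operator $d_\mathfrak{g}$.
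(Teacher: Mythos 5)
Your proposal is correct and follows essentially the same route as the paper: the paper's entire proof is the naturality computation $L_x^\ast(d_\mathfrak{g}\alpha)=L_x^\ast(d\alpha)=d(L_x^\ast\alpha)=d\alpha=d_\mathfrak{g}\alpha$, which is exactly your first step. Your additional verification via the Koszul formula that $d\alpha=d_\mathfrak{g}\alpha$ (constant coefficients kill the $X_if$ terms, leaving only the bracket terms) is a harmless elaboration of what the paper treats as immediate from the definition of $d_\mathfrak{g}$ as the restriction of $d$ to left-invariant forms.
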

\begin{proof}
For any $x\in G$ one has $L_x^\ast(d_\mathfrak{g}\alpha)=L_x^\ast(d\alpha)=d(L_x^\ast\alpha)=d(\alpha)=d_\mathfrak{g}\alpha$.
\end{proof}

\begin{prop}\label{dg does not increase}
Let $\alpha\in\Lambda^1\mathfrak{g}^\ast$  be a left-invariant 1-form, then $d_{\mathfrak{g}}\alpha=\beta_1+\cdots+\beta_{l_\alpha}$, where for each $i=1,\ldots,l_{\alpha}$ we have $w(\beta_i)\le w(\alpha)$ .
\end{prop}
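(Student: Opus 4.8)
The plan is to reduce to a single dual basis covector and then track weights through the bracket. By linearity it suffices to treat $\alpha$ of pure weight $k$, and then, expanding $\alpha$ in an orthonormal basis of $W_k$, to treat $\alpha=\theta=X^\ast$ for a single vector $X$ of weight $k$. As recalled in the Remark, on $1$-forms $d_\mathfrak{g}$ is the dual of the Lie bracket, so for left-invariant vector fields $Y,Z$ one has $d_\mathfrak{g}\theta(Y,Z)=-\theta([Y,Z])=-\langle X,[Y,Z]\rangle$. Reading off the relevant summand of the decomposition $\mathfrak{g}=\bigoplus_{i=0}^{s-1}\bigl(\mathfrak{g}^{(i)}\cap(\mathfrak{g}^{(i+1)})^\perp\bigr)$ used in the proof of the previous Proposition, a covector of weight $k$ is dual to a vector lying in $\mathfrak{g}^{(k-1)}\cap(\mathfrak{g}^{(k)})^\perp$; in particular $X\in\mathfrak{g}^{(k-1)}$ and $X\perp\mathfrak{g}^{(k)}$. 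Likewise, working in an orthonormal basis adapted to this decomposition, a basis vector of weight $a$ lies in $\mathfrak{g}^{(a-1)}$.

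The crucial ingredient is the classical estimate for the lower central series, $[\mathfrak{g}^{(i)},\mathfrak{g}^{(j)}]\subseteq\mathfrak{g}^{(i+j+1)}$, which I would prove by induction on $j$: the case $j=0$ is the defining relation $[\mathfrak{g},\mathfrak{g}^{(i)}]=\mathfrak{g}^{(i+1)}$, and the inductive step writes a generator of $\mathfrak{g}^{(j)}=[\mathfrak{g},\mathfrak{g}^{(j-1)}]$ as $[W,V]$ and applies the Jacobi identity $[\mathfrak{g}^{(i)},[W,V]]\subseteq[[\mathfrak{g}^{(i)},W],V]+[W,[\mathfrak{g}^{(i)},V]]$, bounding each term by the inductive hypothesis. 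This is the step I expect to be the main (though standard) obstacle, since it is what upgrades the purely filtration-theoretic containment $d_\mathfrak{g}\alpha\in\Lambda^2 F_{k-1}$—which by itself only bounds weights by $2(k-1)$—into the sharp statement.

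Granting this, the conclusion is bookkeeping. If $Y$ has weight $a$ and $Z$ has weight $b$, then $[Y,Z]\in[\mathfrak{g}^{(a-1)},\mathfrak{g}^{(b-1)}]\subseteq\mathfrak{g}^{(a+b-1)}=\bigoplus_{i\ge a+b-1}\bigl(\mathfrak{g}^{(i)}\cap(\mathfrak{g}^{(i+1)})^\perp\bigr)$. Since $X$ lies in the single summand indexed by $i=k-1$, the pairing $\langle X,[Y,Z]\rangle$ can be nonzero only when $k-1\ge a+b-1$, that is $a+b\le k$. Expanding $d_\mathfrak{g}\theta=\sum_{\alpha<\beta}d_\mathfrak{g}\theta(X_\alpha,X_\beta)\,\theta_\alpha\wedge\theta_\beta$ in the adapted basis, every nonvanishing term $\theta_\alpha\wedge\theta_\beta$ therefore has pure weight $w(\theta_\alpha)+w(\theta_\beta)\le k$. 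Grouping the surviving terms by their common pure weight $p$ yields $d_\mathfrak{g}\theta=\sum_{p\le k}\beta_p$ with $w(\beta_p)=p\le k=w(\theta)$, and summing over a basis of $W_k$ gives the statement for $\alpha$.
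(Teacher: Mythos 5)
Your proof is correct and follows essentially the same route as the paper's: both identify a covector of pure weight $k$ with a vector in $\mathfrak{g}^{(k-1)}\cap(\mathfrak{g}^{(k)})^\perp$, dualize $d_\mathfrak{g}$ to the Lie bracket, and conclude from the containment $[\mathfrak{g}^{(a-1)},\mathfrak{g}^{(b-1)}]\subseteq\mathfrak{g}^{(a+b-1)}$ that any basis coefficient with $a+b>k$ must vanish (the paper phrases this as a contradiction, you phrase it directly, which is the same argument). The only real difference is that you supply the inductive Jacobi-identity proof of $[\mathfrak{g}^{(i)},\mathfrak{g}^{(j)}]\subseteq\mathfrak{g}^{(i+j+1)}$, which the paper invokes without proof.
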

\begin{proof}
Let us consider $\lbrace X_{k}\rbrace_{1\le k\le n}$ an orthonormal basis of $\Lambda^1\mathfrak{g}$ adapted to the lower central series, and let $X$ be a left-invariant vector field such that $X=\alpha^\ast$ with $\alpha\in\Lambda^1\mathfrak{g}^\ast$, then
% Then a left-invariant vector field $X=\alpha^\ast$ belongs to the subspace $\mathfrak{g}^{(j)}\cap(\mathfrak{g}^{(j+1)})^\perp$ if and only if (up to multiplicative constants)
% \begin{align*}
%     [X_{i_1},X_{i_2}]=[X_{i_3},X_{i_4}]=\cdots=[X_{i_{2l_\alpha}-1},X_{i_{2l_\alpha}}]=X
% \end{align*}
% where $X_{i_k}^\ast\in W_{m_k}$ and $\max_{1\le n\le l_{\alpha} } m_{2n-1}+m_{2n}=j+1$.

% By definition of the differential $d_\mathfrak{g}$ we have 
% \begin{align*}
%     d_\mathfrak{g}\alpha=d_\mathfrak{g}X^\ast=-X_{i_1}^\ast\wedge X_{i_2}^\ast-X_{i_3}^\ast\wedge X_{i_4}^\ast-\cdots-X_{i_{2l_{\alpha}-1}}^\ast\wedge X_{i_{2l_\alpha}}^\ast.
% \end{align*}

% By assumption $w(\alpha)=j+1$ since $X\in\mathfrak{g}^{(j)}\cap(\mathfrak{g}^{(j+1)})^\perp$, hence since for $1\le n\le l_{\alpha}$
% \begin{align*}
%     w(\beta_n)=w(X^\ast_{i_{2n-1}}\wedge X^\ast_{i_{2n}})=w(X^\ast_{i_{2n-1}})+w(X^\ast_{i_{2n}})=m_{2n-1}+m_{2n}\le j+1
% \end{align*}

% Another way of thinking about this is: 
\begin{align*}
    w(\alpha)=j\Longleftrightarrow \alpha\in W_j\Longleftrightarrow X=\alpha^\ast\in\mathfrak{g}^{(j-1)}\cap(\mathfrak{g}^{(j)})^\perp\,.
\end{align*}
Let us assume that $d_{\mathfrak{g}}\alpha=\beta_1+\cdots+\beta_{l_{\alpha}}$, where there is at least a 2-form $\beta_k\in\Lambda^2\mathfrak{g}^\ast$ with $w(\beta_k)=m> j$. Then 
\begin{align*}
    \beta_k=-c\,X^\ast_{i_1}\wedge X_{i_2}^\ast
\end{align*}
with $c\in\mathbb{R}$, $X_{i_1}^\ast\in W_{n_1}$ and $X_{i_2}^\ast\in W_{n_2}$ so that $n_1+n_2=m>j$.

Let us notice that
\begin{align*}
    -c=\langle d_\mathfrak{g}\alpha\mid X_{i_1}\wedge X_{i_2}\rangle=-\langle\alpha\mid [X_{i_1},X_{i_2}]\rangle=-\langle X,[X_{i_1},X_{i_2}]\rangle
\end{align*}
which implies the equality $cX=[X_{i_1},X_{i_{2}}]$ must hold. However, since $X_{i_1}\in\mathfrak{g}^{(n_1-1)}\cap(\mathfrak{g}^{(n_1)})^\perp$ and $X_{i_2}\in\mathfrak{g}^{(n_2-1)}\cap(\mathfrak{g}^{(n_2)})^\perp$  we have
\begin{align*}
    X=[X_{i_1},X_{i_2}]\in [\mathfrak{g}^{(n_1-1)},\mathfrak{g}^{(n_2-1)}]\subset\mathfrak{g}^{n_1+n_2-1}=\mathfrak{g}^{(m-1)}\underbrace{\subset}_{m>j}\mathfrak{g}^{(j)}
\end{align*}
which leads to a contradiction since $X=\alpha^\ast\in\mathfrak{g}^{(j-1)}\cap(\mathfrak{g}^{(j)})^\perp$.

\end{proof}
\begin{cor}
Let $\alpha\in\Lambda^h\mathfrak{g}^\ast$  be a left-invariant 1-form, then $d_{\mathfrak{g}}\alpha=\beta_1+\cdots+\beta_{l_\alpha}$ where for each $i=1,\ldots,l_{\alpha}$ we have $w(\beta_i)\le w(\alpha)$ .
\end{cor}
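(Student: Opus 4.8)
The plan is to deduce this corollary from Proposition \ref{dg does not increase} by means of the graded Leibniz rule, reducing the general $h$-form statement to the already-established case of $1$-forms. Throughout I read the statement as referring to a left-invariant $h$-form $\alpha\in\Lambda^h\mathfrak{g}^\ast$ of pure asymptotic weight $w(\alpha)=p$ (the $1$-form case being exactly Proposition \ref{dg does not increase}).

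First I would reduce to a single basis monomial. By the definition of pure asymptotic weight for $h$-forms, $\alpha$ can be written as a linear combination of wedge products $\theta_{i_1}\wedge\cdots\wedge\theta_{i_h}$ in which each $\theta_{i_j}$ is a basis covector of pure weight $w(\theta_{i_j})$ and $\sum_{j=1}^h w(\theta_{i_j})=p$. Since $d_\mathfrak{g}$ is linear, it suffices to check the weight bound on one such monomial. I would then apply the graded Leibniz rule recorded in the Remark following the definition of $d_\mathfrak{g}$ to obtain
\[
d_\mathfrak{g}(\theta_{i_1}\wedge\cdots\wedge\theta_{i_h})=\sum_{k=1}^h(-1)^{k-1}\,\theta_{i_1}\wedge\cdots\wedge d_\mathfrak{g}\theta_{i_k}\wedge\cdots\wedge\theta_{i_h}.
\]

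Next, for each factor I invoke Proposition \ref{dg does not increase}: write $d_\mathfrak{g}\theta_{i_k}=\sum_m\gamma_{k,m}$ as a sum of pure-weight $2$-forms with $w(\gamma_{k,m})\le w(\theta_{i_k})$. Substituting and distributing, every resulting summand is, up to sign, of the form $\theta_{i_1}\wedge\cdots\wedge\gamma_{k,m}\wedge\cdots\wedge\theta_{i_h}$, which is either zero or a pure-weight $(h+1)$-form whose weight, by additivity of the weight over wedge products, equals
\[
\sum_{j\ne k}w(\theta_{i_j})+w(\gamma_{k,m})\le\sum_{j=1}^h w(\theta_{i_j})=p=w(\alpha).
\]
Collecting the surviving monomials into their pure-weight components $\beta_1,\ldots,\beta_{l_\alpha}$ then yields $w(\beta_i)\le w(\alpha)$ for every $i$.

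I do not expect a genuine obstacle here: the argument is essentially the Leibniz rule combined with the $1$-form case. The only two points deserving care are that the wedge of pure-weight left-invariant forms is again pure-weight with \emph{additive} weight, which is immediate from the definition of asymptotic weights of $h$-forms, and that some monomials may cancel or vanish (for instance when $\gamma_{k,m}$ repeats a covector already present) — but cancellation only deletes terms and can never raise a weight, so the bound is preserved on all surviving components.
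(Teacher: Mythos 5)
Your proof is correct and follows exactly the route the paper takes: the paper's own proof is the one-line remark that one applies Proposition \ref{dg does not increase} to the Leibniz rule of $d_\mathfrak{g}$, which is precisely the argument you spell out (reduction to pure-weight monomials, Leibniz expansion, the $1$-form bound on each factor, and additivity of weights under wedge products). Your version simply makes explicit the details the paper leaves implicit.
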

\begin{proof}
It is sufficient to apply the previous proposition to the Leibniz rule of $d_\mathfrak{g}$.
\end{proof}
\begin{oss}
In the case of Carnot groups we have that $w(\beta_i)=w(\alpha)$ for all $1\le i\le l_{\alpha}$.
\end{oss}

We would like to use this result in order to find a more sophisticated way to express the exterior differential $d$ in terms of the weight increase.

In general, a $h$-form $\alpha\in\Omega^h$ of pure asymptotic weight $p$ will not be a left-invariant form. However, it can be expressed as a combination of the elements of the orthonormal basis $\lbrace \Tilde{\theta}_i\rbrace$ of $\Lambda^h\mathfrak{g}^\ast$ with coefficients in $C^\infty(G)$
\begin{align*}
    \alpha=\sum_{i}f_i\Tilde{\theta}_i\;\text{ with }f_i\in C^\infty(G)\text{ and }\Tilde{\theta}_i\in W^h_p\,,
\end{align*}
where $W_p^h$ denotes the space of left-invariant $h$-form that have asymptotic weight $p$
\begin{align*}
    W_p^h=\bigoplus_{i_1+i_2+\cdots+i_h=p}\Lambda^1W_{i_1}\otimes\Lambda^1W_{i_2}\otimes\cdots\otimes\Lambda^1W_{i_h} \,.
\end{align*}

Therefore, the expression for the differential in local coordinates will be:
\begin{align*}
    d\alpha=\sum_id(f_i\Tilde{\theta}_i)=\sum_idf_i\wedge\Tilde{\theta}_i+\sum_if_id\Tilde{\theta}_i=\sum_i\sum_{j=1}^nX_jf{\theta}_j\wedge\Tilde{\theta}_i+\sum_if_id\Tilde{\theta}_i\,,
\end{align*}
where $\lbrace \theta_j\rbrace_{1\le j\le n}$ is an orthonormal basis for $\Lambda^1\mathfrak{g}^\ast$.

This provides us with a well-posed decomposition of the different operator $d$ by weights.

\begin{defin}
Let $\alpha=\sum_if_i\Tilde{\theta}_i\in\Omega^h$ be an arbitrary $h$-form of pure asymptotic weight $p$, that is $\Tilde{\theta}_i\in W^h_p$, then we can write:
\begin{align*}
    d\alpha=d_{\mathfrak{g}}\alpha+d_1\alpha+d_2\alpha+\cdots+d_s\alpha\,,
\end{align*}
where $s$ is the nilpotency step of $G$.

For $1\le i\le s$, $d_k$ denotes the part of $d$ which increases the weight of the form $\alpha$ by $k$, whereas $d_\mathfrak{g}\alpha$ is the part of $d$ that does not increase the weight, that is:
\begin{itemize}
    \item $d_{\mathfrak{g}}\alpha=\sum_if_id\Tilde{\theta}_i$ by Proposition \ref{dg does not increase};
    \item $d_k\alpha=\sum_i\sum_{X_j^\ast=\theta_j\in W_k}X_jf_i\,\theta_j\wedge\Tilde{\theta}_i\in\Gamma(W_k\otimes W^h_p)$ for $1\le k\le s$.
\end{itemize}
\end{defin}
\section{The Rumin complex}
Now that we have introduced a new definition of asymptotic weights of forms and we have studied how the operator $d_\mathfrak{g}$ behaves with respect to these new weights, we can finally construct the whole complex $(E_0^\bullet,d_c)$. In this presentation, we follow the steps of the construction in \cite{FT}.

\begin{prop}
$(\Omega^\bullet,d_{\mathfrak{g}})$ is a complex.
\end{prop}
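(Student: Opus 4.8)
The plan is to prove that $d_{\mathfrak{g}}^2=0$, which is exactly the statement that $(\Omega^\bullet,d_{\mathfrak{g}})$ is a cochain complex. The key structural observation is that $d_{\mathfrak{g}}$ is \emph{tensorial}, i.e.\ $C^\infty(G)$-linear: directly from its definition, for a pure-weight form $\alpha=\sum_i f_i\Tilde{\theta}_i$ with $\Tilde{\theta}_i\in W^h_p$ one has $d_{\mathfrak{g}}\alpha=\sum_i f_i\,d\Tilde{\theta}_i$, so $d_{\mathfrak{g}}$ acts only on the left-invariant factor and leaves the coefficients $f_i$ untouched. Under the isomorphism $\Omega^k\cong\Lambda^k\mathfrak{g}^\ast\otimes C^\infty(G)$ this says that $d_{\mathfrak{g}}$ is the restriction of $d$ to $\Lambda^\bullet\mathfrak{g}^\ast$ tensored with the identity on $C^\infty(G)$; since $\Lambda^h\mathfrak{g}^\ast=\bigoplus_p W^h_p$ is a direct sum, the formula above extends unambiguously by linearity to forms that are not of pure weight.

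First I would record that on left-invariant forms $d_{\mathfrak{g}}$ is nothing but $d$. This is precisely the content of the Lemma: for $\omega\in\Lambda^\bullet\mathfrak{g}^\ast$ one has $d_{\mathfrak{g}}\omega=d\omega$, and moreover $d\omega$ is again left-invariant. Consequently $d_{\mathfrak{g}}$ maps $\Lambda^\bullet\mathfrak{g}^\ast$ into $\Lambda^{\bullet+1}\mathfrak{g}^\ast$ and agrees there with the restriction of the de Rham differential. In particular, iterating inside the space of left-invariant forms gives $d_{\mathfrak{g}}(d_{\mathfrak{g}}\omega)=d_{\mathfrak{g}}(d\omega)=d(d\omega)=0$ for every $\omega\in\Lambda^\bullet\mathfrak{g}^\ast$, using $d^2=0$. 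Thus $d_{\mathfrak{g}}^2$ vanishes on the left-invariant subalgebra, which is just the classical fact that the Chevalley--Eilenberg differential squares to zero (equivalently, the Jacobi identity).

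It then remains to propagate this to arbitrary smooth forms by tensoriality. Writing a general $\alpha$ as a sum of pure-weight pieces, it suffices to treat $\alpha=\sum_i f_i\Tilde{\theta}_i$ with $\Tilde{\theta}_i\in W^h_p$. Then
\begin{align*}
    d_{\mathfrak{g}}^2\alpha=d_{\mathfrak{g}}\Big(\sum_i f_i\,d\Tilde{\theta}_i\Big)=\sum_i f_i\,d_{\mathfrak{g}}\big(d\Tilde{\theta}_i\big)=\sum_i f_i\,d\big(d\Tilde{\theta}_i\big)=0,
\end{align*}
where the second equality is the $C^\infty(G)$-linearity of $d_{\mathfrak{g}}$, and the third uses that each $d\Tilde{\theta}_i$ is left-invariant so that $d_{\mathfrak{g}}$ and $d$ coincide on it. The final vanishing is again $d^2=0$.

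I expect no genuine obstacle here: once tensoriality is isolated, the identity $d_{\mathfrak{g}}^2=0$ reduces entirely to $d^2=0$ on left-invariant forms, so the statement carries no analytic content beyond the Jacobi identity. The only point requiring care is bookkeeping: one must check that the definition of $d_{\mathfrak{g}}$, originally given for forms of pure weight, is consistent and extends $C^\infty(G)$-linearly, and that $d\Tilde{\theta}_i$ --- which by Proposition \ref{dg does not increase} may split into several pure-weight components of weight $\le p$ --- is still acted upon by $d_{\mathfrak{g}}$ exactly as by $d$ on each component. Both are immediate from the Lemma, since every such component is left-invariant.
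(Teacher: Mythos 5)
Your proof is correct and takes essentially the same approach as the paper: the paper's proof is the one-line assertion that the claim ``follows directly from the definition of $d_{\mathfrak{g}}$'' (identifying $(\Omega^\bullet,d_{\mathfrak{g}})$ as the Chevalley--Eilenberg complex with coefficients in $C^\infty(G)$), and your argument --- $C^\infty(G)$-linearity of $d_{\mathfrak{g}}$ combined with $d^2=0$ on the left-invariant subalgebra --- is precisely the detail behind that assertion.
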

\begin{proof}
The claim follows directly from the definition of $d_\mathfrak{g}$. In particular, the cohomology of $(\Omega^\bullet,d_\mathfrak{g})$ is the Lie algebra cohomology of $\mathfrak{g}$ with coefficients in $C^\infty(G)$.
\end{proof}
\begin{defin}\label{defin inverse}
We want to define an inverse of the operator $d_{\mathfrak{g}}$. We can exploit the following map
\begin{align*}
    d_\mathfrak{g}\colon \Lambda^k\mathfrak{g}^\ast/Ker\,d_\mathfrak{g}\xrightarrow{\text{}}\Lambda^{k+1}\mathfrak{g}^\ast\,,
\end{align*}
so that by taking $\beta\in\Lambda^{k+1}\mathfrak{g}^\ast$ with $\beta\neq 0$, there exists a unique $\alpha\perp Ker\,d_\mathfrak{g}$ such that $d_\mathfrak{g}\alpha=\beta+\xi$ with $\xi\in(Im\,d_\mathfrak{g})^\perp$.

In general we have $\beta\notin Im\,d_\mathfrak{g}$, hence there exists $\delta\in (Im\,  d_\mathfrak{g})^\perp$ and $\exists \,\gamma\in Im\,d_\mathfrak{g}$ such that $\beta=\gamma+\delta$ In the expression before we will then have $\xi=-\delta$.

Hence we can define
\begin{align*}
    d_\mathfrak{g}^{-1}\colon \Lambda^{k+1}\mathfrak{g}^\ast&\to\lbrace \alpha\in\Lambda^k\mathfrak{g}^\ast\mid\alpha\perp Ker\,d_\mathfrak{g}\rbrace\\
    \beta\,&\mapsto d_{\mathfrak{g}}^{-1}\beta=\alpha\,.
\end{align*}
\end{defin}
\begin{oss}
$Im\,d_\mathfrak{g}^{-1}\cong(Ker\,d_\mathfrak{g})^\perp$.
\end{oss}

Let us consider the following operator
\begin{align*}
    d_\mathfrak{g}^{-1}d\colon Im\,d_\mathfrak{g}^{-1}\to Im\,d_\mathfrak{g}^{-1}\,.
\end{align*}

We can split this new operator depending on the filtration on $\mathfrak{g}^\ast$:
\begin{align*}
    d_\mathfrak{g}^{-1}d=d_\mathfrak{g}^{-1}(d_\mathfrak{g}+d_1+\cdots+d_s)=d_\mathfrak{g}^{-1}d_\mathfrak{g}+\underbrace{d_\mathfrak{g}^{-1}d_1+d_\mathfrak{g}^{-1}d_2+\cdots+d_\mathfrak{g}^{-1}d_s}_{=:D}\,.
\end{align*}
\begin{oss}
$d_\mathfrak{g}^{-1}d_\mathfrak{g}(\alpha)=\alpha$ for any $\alpha\in Im\,d_\mathfrak{g}^{-1}$.

It is a trivial result which comes directly from the fact that $Im\,d_\mathfrak{g}^{-1}\perp Ker\,d_\mathfrak{g}$.
\end{oss}
\begin{cor}
\begin{align*}
    d_\mathfrak{g}^{-1}d\bigg\vert_{Im\,d_\mathfrak{g}^{-1}}=Id+D\,.
\end{align*}
\end{cor}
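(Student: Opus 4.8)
The plan is to read off the statement directly from the weight decomposition of $d$ together with the preceding remark on $d_\mathfrak{g}^{-1}d_\mathfrak{g}$. First I would recall the splitting of the exterior differential by weight increase, $d=d_\mathfrak{g}+d_1+\cdots+d_s$, and use the linearity of $d_\mathfrak{g}^{-1}$ to rewrite the operator $d_\mathfrak{g}^{-1}d$ on $Im\,d_\mathfrak{g}^{-1}$ exactly as in the definition of $D$ given above, namely
\begin{align*}
    d_\mathfrak{g}^{-1}d=d_\mathfrak{g}^{-1}(d_\mathfrak{g}+d_1+\cdots+d_s)=d_\mathfrak{g}^{-1}d_\mathfrak{g}+\underbrace{d_\mathfrak{g}^{-1}d_1+\cdots+d_\mathfrak{g}^{-1}d_s}_{=D}\,.
\end{align*}

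Next I would invoke the remark that $d_\mathfrak{g}^{-1}d_\mathfrak{g}(\alpha)=\alpha$ for every $\alpha\in Im\,d_\mathfrak{g}^{-1}$, which says that $d_\mathfrak{g}^{-1}d_\mathfrak{g}$ restricts to the identity on $Im\,d_\mathfrak{g}^{-1}$. Substituting $d_\mathfrak{g}^{-1}d_\mathfrak{g}=Id$ into the displayed identity immediately produces $d_\mathfrak{g}^{-1}d\big\vert_{Im\,d_\mathfrak{g}^{-1}}=Id+D$, which is the claim. Thus the corollary is a direct bookkeeping consequence of the weight splitting of $d$ and the left-inverse property of $d_\mathfrak{g}^{-1}$.

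The only point that warrants a moment's care is that the term-by-term application of $d_\mathfrak{g}^{-1}$ to the summands of $d$ is legitimate as an identity of operators restricted to $Im\,d_\mathfrak{g}^{-1}$; this is guaranteed by the fact that $d_\mathfrak{g}^{-1}d$ is set up precisely as a map $Im\,d_\mathfrak{g}^{-1}\to Im\,d_\mathfrak{g}^{-1}$, and ultimately by the orthogonality $Im\,d_\mathfrak{g}^{-1}\perp Ker\,d_\mathfrak{g}$ underlying the remark. I do not expect any genuine obstacle here: there is no estimate or delicate construction involved, only the substitution of the already-established relation $d_\mathfrak{g}^{-1}d_\mathfrak{g}=Id$ on $Im\,d_\mathfrak{g}^{-1}$ into the weight decomposition of $d$.
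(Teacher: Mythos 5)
Your proposal is correct and follows exactly the route the paper takes: split $d_\mathfrak{g}^{-1}d=d_\mathfrak{g}^{-1}d_\mathfrak{g}+D$ via the weight decomposition $d=d_\mathfrak{g}+d_1+\cdots+d_s$, then substitute the remark that $d_\mathfrak{g}^{-1}d_\mathfrak{g}=Id$ on $Im\,d_\mathfrak{g}^{-1}$ (which rests on $Im\,d_\mathfrak{g}^{-1}\perp Ker\,d_\mathfrak{g}$). The paper treats the corollary as an immediate consequence of these same two facts, so there is nothing to add.
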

\begin{prop}
${d_\mathfrak{g}^{-1}}$ {does not decrease weight.}
\end{prop}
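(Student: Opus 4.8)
The plan is to prove the statement in the form: if $\beta\in\Lambda^{k+1}\mathfrak{g}^\ast$ has pure asymptotic weight $p$, then every pure-weight component of $\alpha:=d_{\mathfrak{g}}^{-1}\beta$ has weight $\ge p$, i.e. $\alpha$ is orthogonal to all forms of weight $<p$. I will write $\mathcal{W}_{\ge p}$ (resp. $\mathcal{W}_{<p}$) for the sum of the weight spaces of weight $\ge p$ (resp. $<p$), and $P_{<p}$ for the orthogonal projection onto $\mathcal{W}_{<p}$; the weight decomposition is orthogonal, so these are well defined. By Definition \ref{defin inverse}, $\alpha$ is the unique element with $\alpha\perp Ker\,d_{\mathfrak{g}}$ and $d_{\mathfrak{g}}\alpha-\beta\in(Im\,d_{\mathfrak{g}})^\perp$. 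Applying the adjoint $d_{\mathfrak{g}}^\ast$ (with respect to $\langle\cdot,\cdot\rangle$) to $d_{\mathfrak{g}}\alpha-\beta\in(Im\,d_{\mathfrak{g}})^\perp=Ker\,d_{\mathfrak{g}}^\ast$ yields the normal equation $d_{\mathfrak{g}}^\ast d_{\mathfrak{g}}\,\alpha=d_{\mathfrak{g}}^\ast\beta$, while $\alpha\perp Ker\,d_{\mathfrak{g}}$. Setting $L:=d_{\mathfrak{g}}^\ast d_{\mathfrak{g}}$, which is self-adjoint, nonnegative, and invertible on $(Ker\,d_{\mathfrak{g}})^\perp$, this identifies $\alpha=L^{-1}(d_{\mathfrak{g}}^\ast\beta)$, where $L^{-1}$ denotes the inverse on $(Ker\,d_{\mathfrak{g}})^\perp$.

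The first clean step is the dual of Proposition \ref{dg does not increase}: the adjoint $d_{\mathfrak{g}}^\ast$ does not decrease weight. Indeed, if $\mu,\beta$ are of pure weight with $w(\mu)<w(\beta)$, the Corollary gives $w(d_{\mathfrak{g}}\mu)\le w(\mu)<w(\beta)$, so $\langle d_{\mathfrak{g}}\mu,\beta\rangle=0$, hence $\langle\mu,d_{\mathfrak{g}}^\ast\beta\rangle=0$; thus $d_{\mathfrak{g}}^\ast\beta$ is orthogonal to every weight $<w(\beta)$. In particular $d_{\mathfrak{g}}^\ast\beta\in\mathcal{W}_{\ge p}$, and, since $\mathcal{W}_{<p}$ is a $d_{\mathfrak{g}}$-subcomplex (as $d_{\mathfrak{g}}$ does not increase weight), its orthogonal complement $\mathcal{W}_{\ge p}$ is a $d_{\mathfrak{g}}^\ast$-subcomplex. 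Because the pseudoinverse of a self-adjoint operator is a limit of polynomials in it, $L^{-1}$ preserves every $L$-invariant subspace. Hence the whole statement would follow if one could show that $\mathcal{W}_{\ge p}$ is invariant under $L=d_{\mathfrak{g}}^\ast d_{\mathfrak{g}}$.

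This last invariance is exactly where the difficulty concentrates, and it is the feature separating the general nilpotent case from the Carnot one. Decompose $d_{\mathfrak{g}}=d_{\mathfrak{g}}^{(0)}+d_{\mathfrak{g}}^{(-1)}+\cdots+d_{\mathfrak{g}}^{(-s)}$ into the part $d_{\mathfrak{g}}^{(0)}$ preserving weight and the parts $d_{\mathfrak{g}}^{(-m)}$ lowering it by exactly $m$; by the Remark following the Corollary, the components $d_{\mathfrak{g}}^{(-m)}$ with $m\ge 1$ vanish precisely for Carnot groups. On $\mathcal{W}_{\ge p}$ the weight-preserving part stays in $\mathcal{W}_{\ge p}$, but the operators $d_{\mathfrak{g}}^{(-m)}$ push mass below weight $p$, and $d_{\mathfrak{g}}^\ast$ cannot recover it; consequently $L$ does not leave $\mathcal{W}_{\ge p}$ invariant, and neither $Ker\,d_{\mathfrak{g}}$ nor $Im\,d_{\mathfrak{g}}$ is weight-homogeneous. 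This is essential: the bare property ``$d_{\mathfrak{g}}$ does not increase weight'' is \emph{not} enough, since for a general weight-non-increasing linear map the (pseudo)inverse may well decrease weight. What rescues the statement is that the weights are not arbitrary but are dictated by the lower central series: the filtration $\lbrace F_i\rbrace$ exhausts $\mathfrak{g}^\ast$, a global constraint that forbids $Ker\,d_{\mathfrak{g}}$ or $Im\,d_{\mathfrak{g}}$ from being deficient in any single weight.

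I would therefore close the argument by induction on the weight, from the top weight downward. At the top weight the only surviving contribution is weight-preserving, where the claim is immediate because $d_{\mathfrak{g}}^{(0)}$ respects the orthogonal weight grading; one then descends one weight at a time. At each stage, pairing the normal equation $d_{\mathfrak{g}}^\ast d_{\mathfrak{g}}\,\alpha=d_{\mathfrak{g}}^\ast\beta$ with $\alpha_{<p}:=P_{<p}\alpha$ and using $d_{\mathfrak{g}}\alpha_{<p}\in\mathcal{W}_{<p}\perp\beta$ gives the energy identity
\begin{align*}
\|d_{\mathfrak{g}}\alpha_{<p}\|^2=-\langle P_{<p}\,d_{\mathfrak{g}}\,\alpha_{\ge p},\,d_{\mathfrak{g}}\alpha_{<p}\rangle,
\end{align*}
which is the relation that must be exploited to force the lower-weight components of $\alpha$ to vanish. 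The main obstacle is controlling the coupling term $P_{<p}\,d_{\mathfrak{g}}\,\alpha_{\ge p}$ generated by the weight-lowering operators $d_{\mathfrak{g}}^{(-m)}$; this is precisely the place where the lower-central-series definition of the weights (rather than the mere non-increase of $d_{\mathfrak{g}}$) has to enter, and making this induction rigorous is the heart of the proof.
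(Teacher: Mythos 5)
Your proposal is not a complete proof: it is a correct reduction of the statement followed by an explicit admission that the decisive step is left undone (``making this induction rigorous is the heart of the proof''). What you do establish is sound. The normal equation $d_\mathfrak{g}^\ast d_\mathfrak{g}\,\alpha=d_\mathfrak{g}^\ast\beta$ for $\alpha=d_\mathfrak{g}^{-1}\beta$, the dual lemma that $d_\mathfrak{g}^\ast$ does not decrease weight, and the energy identity
\begin{align*}
\|d_\mathfrak{g}\alpha_{<p}\|^2=-\langle P_{<p}\,d_\mathfrak{g}\alpha_{\ge p},\,d_\mathfrak{g}\alpha_{<p}\rangle
\end{align*}
are all correct; moreover, once one knows $d_\mathfrak{g}\alpha_{<p}=0$, the conclusion $\alpha_{<p}=0$ does follow, since $\alpha_{<p}$ then lies in $Ker\,d_\mathfrak{g}$ while $\alpha\perp Ker\,d_\mathfrak{g}$ and the weight decomposition is orthogonal. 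But the one step that carries the actual content of the proposition --- showing that the coupling term $\langle P_{<p}\,d_\mathfrak{g}\alpha_{\ge p},\,d_\mathfrak{g}\alpha_{<p}\rangle$ is nonnegative (e.g.\ vanishes), so that the weight-lowering part of $d_\mathfrak{g}$ cannot feed $d_\mathfrak{g}\alpha_{<p}$ --- is never carried out; you only point to where the lower-central-series structure ``has to enter''. Since, as you yourself argue, the rest of the setup cannot suffice on formal grounds, this missing step is precisely the proposition, and the proposal therefore has a genuine gap.

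Two remarks on how this sits relative to the paper. First, your negative observation is correct and genuinely useful: the statement is not a formal consequence of Proposition \ref{dg does not increase}. On two graded lines, take $Tu_1=w_1$, $Tu_2=w_1+w_2$ with $w(u_i)=w(w_i)=i$; then $T$ does not increase weight, yet $T^{-1}w_2=u_2-u_1$ has a component of weight $1<2$. So any complete argument must exploit more than weight non-increase. Second, the place where you stop is exactly the step the paper's proof performs in its last lines: the paper expands $d_\mathfrak{g}^{-1}\beta=\sum_k c_k\alpha_k$ into pure-weight components, uses Proposition \ref{dg does not increase} to get $\beta\perp d_\mathfrak{g}\alpha_j$ whenever $w(\alpha_j)<p$, and concludes $c_j=0$. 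Note that combining $\beta\perp d_\mathfrak{g}\alpha_j$ with $\beta-\beta'\perp Im\,d_\mathfrak{g}$ gives $\langle d_\mathfrak{g}\alpha,d_\mathfrak{g}\alpha_j\rangle=0$, which, summed over the weights below $p$, is exactly your energy identity; and your toy example shows that passing from these orthogonality relations to $c_j=0$ requires an input beyond non-increase of $d_\mathfrak{g}$ (an orthogonal projection can have nonzero coefficients along vectors orthogonal to $\beta$ when the spanning set $\lbrace d_\mathfrak{g}\alpha_k\rbrace$ is not orthogonal). In short: you have correctly located the crux of the argument, and even sharpened why it is a crux, but locating the obstacle is not the same as overcoming it, so as a proof of the proposition the proposal fails.
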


\begin{proof}
Let us consider $\beta\in\Lambda^h\mathfrak{g}^\ast$, with $w(\beta)=p$. 

Let us consider $\beta'\in\Lambda^h\mathfrak{g}^\ast$ the projection of $\beta$ on $Im\,d_\mathfrak{g}$, then we have
\begin{align*}
    \beta'=&c_1(\beta_1^{1}+\cdots+\beta_{m_1}^{1})+c_2(\beta_{1}^{2}+\cdots+\beta_{m_2}^{2})+\cdots+c_l(\beta_{1}^{l}+\cdots+\beta_{m_l}^{l})\\=&c_1d_\mathfrak{g}\alpha_{1}+\cdots+c_ld_\mathfrak{g}\alpha_{l}=d_\mathfrak{g}\alpha
\end{align*}
where $\alpha=c_1\alpha_{1}+\cdots+c_l\alpha_{l}\in\Lambda^{h-1}\mathfrak{g}^\ast$, and for all $1\le k\le l$ we have $w(\alpha_{k})=p_k$ and 
\begin{align*}
    d_\mathfrak{g}\alpha_{k}=\beta_1^{k}+\cdots+\beta_{m_k}^{k}\,.
\end{align*}

Therefore $d_\mathfrak{g}^{-1}\beta=\alpha=c_1\alpha_{1}+\cdots+c_l\alpha_{l}$.
Let us assume that there exists a ${j}\in\lbrace 1,\ldots,l\rbrace$ such that $w(\alpha_{j})<w(\beta)$, then by Proposition \ref{dg does not increase} we have
\begin{align*}
    w(\beta)>w(\alpha_{j})\ge w(\beta_i^{j}) \;\text{ for all }\,1\le i\le m_j\,.
\end{align*}

This implies that the $h$-form $\beta$ is orthogonal to the subspace spanned by $\beta_{i}^{j}$, $1\le i\le m_j$, hence $c_j=0$. Therefore we have
\begin{align*}
    \beta'=\sum_{k=1, k\neq j}^lc_k\big(\beta_{1}^k+\beta_2^k+\cdots+\beta_{m_k}^k\big)
\end{align*}
and $d_\mathfrak{g}^{-1}\beta=c_1\alpha_1+\cdots+\widehat{c_j\alpha_j}+\cdots+c_l\alpha_l$, that is the component of $d_\mathfrak{g}^{-1}\beta$ with asymptotic weight strictly less than $p=w(\beta)$ is zero.
\end{proof}
\begin{cor}\label{operator D}
The operator $D:=d_\mathfrak{g}^{-1}(d-d_\mathfrak{g})$ increases the asymptotic weight of forms by at least 1. The fact that we are working on a nilpotent group and there exists a maximum weight for forms implies that the operator is indeed nilpotent:
\begin{align*}
    \exists\,N\in\mathbb{N}\text{ such that } D^N\equiv 0\,.
\end{align*}
\end{cor}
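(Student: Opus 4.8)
The plan is to combine the weight decomposition of $d$ with the fact, just established, that $d_\mathfrak{g}^{-1}$ does not decrease weight. Recall from the definition of the operators $d_k$ that $d-d_\mathfrak{g}=d_1+d_2+\cdots+d_s$, where each $d_k$ sends a form of pure weight $p$ to one of pure weight $p+k$. Since every index $k$ appearing in this sum satisfies $k\ge 1$, the operator $d-d_\mathfrak{g}$ raises the weight of any pure-weight form by at least $1$.

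First I would verify the weight-raising claim on a form $\alpha\in\Omega^h$ of pure weight $p$. Writing $(d-d_\mathfrak{g})\alpha=\sum_{k=1}^s d_k\alpha$, each summand $d_k\alpha$ has pure weight $p+k\ge p+1$. Applying $d_\mathfrak{g}^{-1}$ and invoking the preceding Proposition, which guarantees that $d_\mathfrak{g}^{-1}$ does not lower weight, each $d_\mathfrak{g}^{-1}d_k\alpha$ has all its components of weight $\ge p+k\ge p+1$. Hence $D\alpha=\sum_k d_\mathfrak{g}^{-1}d_k\alpha$ is a sum of forms each of weight at least $p+1$, proving that $D$ increases asymptotic weight by at least $1$. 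For a general form I would decompose it into its pure-weight pieces and apply this estimate componentwise.

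For nilpotency, the key observation is that the asymptotic weight of any form on $G$ is bounded above. Indeed, weights of left-invariant $1$-forms take values in $\{1,\ldots,s\}$, so the weight of any left-invariant $h$-form is at most $hs$; in particular there is a finite maximum weight $M$ attained over all degrees $h$ and all forms (for instance $M=\sum_{i=1}^s i\,\dim W_i$, the homogeneous dimension of the associated graded group, realised by the top-degree form $\theta_1\wedge\cdots\wedge\theta_n$). Since each application of $D$ strictly raises the weight of every pure-weight component by at least $1$, after more than $M$ iterations every component would be forced to have weight exceeding $M$, which is impossible. Thus taking any $N>M$ yields $D^N\equiv 0$.

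The main obstacle, though minor here, is bookkeeping: one must ensure that the weight-raising estimate holds componentwise for forms that are only sums of pure-weight pieces, and that the upper bound $M$ is genuinely uniform across all form-degrees $h$. Both follow from the finite-dimensionality of $\Lambda^\bullet\mathfrak{g}^\ast$ and the finiteness of the nilpotency step $s$, so no new analytic input is needed beyond the two facts already proven.
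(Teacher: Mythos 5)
Your proof is correct and follows essentially the same route the paper intends: combining the decomposition $d-d_\mathfrak{g}=d_1+\cdots+d_s$ (each $d_k$ raising weight by $k\ge 1$) with the preceding proposition that $d_\mathfrak{g}^{-1}$ does not decrease weight, and then using the finite maximum weight to conclude nilpotency. The paper states the corollary without writing out these steps, so your argument is precisely the fleshed-out version of its implicit proof.
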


\begin{oss}
The concept of asymptotic weights of forms has been introduced here in order to create a parallel with the construction in \cite{rumin2000around,rumin2005Palermo}. In particular, these weights provide a direct tool to prove the nilpotency of the differential operator $D$. It should be stressed that the same result (and hence the same construction) can also be reached by studying the action of the algebraic operator $d_\mathfrak{g}$ with respect to the filtration $\lbrace \Lambda^\bullet F_i\rbrace_{1\le i\le s}$ induced by the lower central series on the space of left-invariant forms (see Appendix \ref{Alternative proof} for the intuition behind this argument).
\end{oss}

\begin{prop}
Let us introduce another differential operator:
\begin{align*}
    P:=\sum_{k=0}^N(-D)^k\,.
\end{align*}
$P$ is the inverse of $d_\mathfrak{g}^{-1}d$ on $Im\,d_\mathfrak{g}^{-1}$.
\end{prop}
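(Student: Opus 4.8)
The plan is to recognise $P$ as a truncated Neumann series for the inverse of $Id+D$ and to exploit the nilpotency of $D$ established in Corollary \ref{operator D}. By the Corollary asserting that $d_\mathfrak{g}^{-1}d=Id+D$ on $Im\,d_\mathfrak{g}^{-1}$, it suffices to show that $P$ is a two-sided inverse of $Id+D$ on this subspace.

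First I would check that all the operators involved are well-defined endomorphisms of $Im\,d_\mathfrak{g}^{-1}$. Since $D=d_\mathfrak{g}^{-1}(d-d_\mathfrak{g})$ has image contained in $Im\,d_\mathfrak{g}^{-1}\cong(Ker\,d_\mathfrak{g})^\perp$, the operator $D$ maps $Im\,d_\mathfrak{g}^{-1}$ into itself, and hence so do every power $D^k$ and the polynomial $P=\sum_{k=0}^N(-D)^k$. In particular $Id+D$ and $P$ are both polynomials in $D$ and therefore commute, so it is enough to verify a single one-sided identity.

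The heart of the argument is then a telescoping cancellation: I would expand
\begin{align*}
(Id+D)P&=(Id+D)\sum_{k=0}^N(-1)^kD^k\\
&=\sum_{k=0}^N(-1)^kD^k+\sum_{k=0}^N(-1)^kD^{k+1}\\
&=Id-(-1)^{N+1}D^{N+1}.
\end{align*}
By Corollary \ref{operator D} one may choose $N$ so that $D^N\equiv 0$, whence $D^{N+1}=D\cdot D^N\equiv 0$ and the right-hand side collapses to $Id$. The same telescoping, using commutativity, yields $P(Id+D)=Id$, so that $P=(Id+D)^{-1}=(d_\mathfrak{g}^{-1}d)^{-1}$ on $Im\,d_\mathfrak{g}^{-1}$.

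I do not expect any genuine obstacle here: the statement is essentially the finite geometric-series identity for a nilpotent operator. The only points demanding care are the bookkeeping in the telescoping sum and the observation that the chosen $N$ forces $D^{N+1}$, and not merely $D^N$, to vanish — both of which follow immediately from the nilpotency of $D$ and the fact that $D$ preserves $Im\,d_\mathfrak{g}^{-1}$.
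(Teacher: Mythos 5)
Your proof is correct and follows essentially the same route as the paper: both rest on the identity $d_\mathfrak{g}^{-1}d = Id + D$ on $Im\,d_\mathfrak{g}^{-1}$ together with the telescoping geometric-series cancellation and the nilpotency $D^{N+1}=D\cdot D^N\equiv 0$. The only (harmless) difference is that the paper verifies just the one-sided identity $P(d_\mathfrak{g}^{-1}d)\alpha=\alpha$, whereas you additionally obtain the other side via commutativity of polynomials in $D$, which is a slight strengthening rather than a different method.
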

\begin{proof}
First of all $P$ is well-defined as a differential operator on $Im\,d_\mathfrak{g}^{-1}$ as it is a finite sum of $d_\mathfrak{g}^{-1}d_1+\cdots+d_\mathfrak{g}^{-1}d_s$ and its finite powers.

We need to check that $P(d_\mathfrak{g}^{-1}d)\alpha=\alpha$ for an arbitrary $\alpha\in Im\,  d_\mathfrak{g}^{-1}$:
\begin{align*}
    P(d_\mathfrak{g}^{-1}d)\alpha=&P(d_\mathfrak{g}^{-1}d_\mathfrak{g}\alpha+d_\mathfrak{g}^{-1}(d-d_\mathfrak{g})\alpha)=P\alpha+Pd_\mathfrak{g}^{-1}(d-d_\mathfrak{g})\alpha
    =\sum_{k=0}^N(-D)^k\alpha+\sum_{k=0}^N(-D)^kD\alpha\\=&\sum_{k=0}^N(-D)^k\alpha-\sum_{k=1}^{N+1}(-D)^k\alpha=(-D)^0\alpha-\underbrace{(-D)^{N+1}\alpha}_{=0}=\alpha
\end{align*}
\end{proof}
\begin{oss}
The map $   d_\mathfrak{g}^{-1} {d}$ induces an isomorphism from $( {Ker\,  d_\mathfrak{g}})^\perp$ to itself, whose inverse is the differential operator $P$.

Moreover, this means that, when restricted to this subspace, the de Rham differential itself has a left inverse which we will denote by $ {Q}:= {P} {  d_\mathfrak{g}^{-1}}$, i.e. $ {Q\,d=Id}$ on $( {Ker\,  d_\mathfrak{g}})^\perp$.

Thus this subspace can be cut out from the de Rham complex, using the homotopical equivalence $Id- {Q\,d}$. One can also get rid of $ {d}( {Ker\,  d_\mathfrak{g}})^\perp$ and identify the remaining space.
\end{oss}
\begin{prop}
The operator $\Pi_{ {E}}:= {Id}- {Q\,d}- {d\,Q}$ is a projection operator on the subspace $\lbrace\alpha\in\Omega^\bullet\mid\Pi_ {E}\alpha=\alpha\rbrace=\lbrace\alpha\in\Omega^\bullet\mid {Q\,d}\alpha+ {d\,Q}\alpha=0\rbrace$.
\end{prop}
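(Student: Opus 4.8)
The plan is to reduce the statement to a short algebraic identity for $Q$ and then verify that identity from the structure $Q=P\,d_\mathfrak{g}^{-1}$. The description of the fixed subspace is immediate: by definition $\Pi_E\alpha=\alpha$ means $\alpha-Q\,d\alpha-d\,Q\alpha=\alpha$, i.e. $Q\,d\alpha+d\,Q\alpha=0$, so the two sets coincide and the range of a projection always equals its fixed-point set. Hence the entire content is the idempotency $\Pi_E^2=\Pi_E$. Writing $S:=Q\,d+d\,Q$ and $\Pi_E=Id-S$, one has $(Id-S)^2=Id-2S+S^2$, so $\Pi_E^2=\Pi_E$ is equivalent to $S^2=S$.

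First I would record two properties of $Q$. The first is $Q\,d\,Q=Q$: since $P$ preserves $Im\,d_\mathfrak{g}^{-1}$, we have $Im\,Q\subseteq Im\,d_\mathfrak{g}^{-1}=(Ker\,d_\mathfrak{g})^\perp$, and by the preceding remark $Q\,d=Id$ on $(Ker\,d_\mathfrak{g})^\perp$; applying this to $Q\alpha\in(Ker\,d_\mathfrak{g})^\perp$ yields $Q\,d\,Q=Q$. The second, and crucial, property is $Q^2=0$. Granting both, I expand $S^2$ using $d^2=0$ together with $Q\,d\,Q=Q$ (which gives $(Q\,d)^2=(Q\,d\,Q)\,d=Q\,d$ and $(d\,Q)^2=d\,(Q\,d\,Q)=d\,Q$):
\begin{align*}
S^2=(Q\,d)^2+Q\,d\,d\,Q+d\,Q\,Q\,d+(d\,Q)^2=Q\,d+0+d\,Q^2\,d+d\,Q=Q\,d+d\,Q+d\,Q^2\,d\,.
\end{align*}
Thus $S^2=S$ exactly when the mixed term $d\,Q^2\,d$ vanishes, and $Q^2=0$ kills it outright.

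It remains to prove $Q^2=0$, which I expect to be the only substantial step. The key sub-fact is $d_\mathfrak{g}^{-1}d_\mathfrak{g}^{-1}=0$. Since $(\Omega^\bullet,d_\mathfrak{g})$ is a complex we have $Im\,d_\mathfrak{g}\subseteq Ker\,d_\mathfrak{g}$, hence $(Ker\,d_\mathfrak{g})^\perp\subseteq(Im\,d_\mathfrak{g})^\perp$; moreover $Ker\,d_\mathfrak{g}^{-1}=(Im\,d_\mathfrak{g})^\perp$, because by Definition \ref{defin inverse} the operator $d_\mathfrak{g}^{-1}$ sees only the $Im\,d_\mathfrak{g}$-component of its argument and annihilates $(Im\,d_\mathfrak{g})^\perp$. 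Combining these, $Im\,d_\mathfrak{g}^{-1}\subseteq(Ker\,d_\mathfrak{g})^\perp\subseteq(Im\,d_\mathfrak{g})^\perp=Ker\,d_\mathfrak{g}^{-1}$, which is precisely $d_\mathfrak{g}^{-1}d_\mathfrak{g}^{-1}=0$.

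From this I propagate the vanishing through $P$. Since $D=d_\mathfrak{g}^{-1}(d-d_\mathfrak{g})$, the identity $d_\mathfrak{g}^{-1}d_\mathfrak{g}^{-1}=0$ gives $d_\mathfrak{g}^{-1}D=0$, whence $d_\mathfrak{g}^{-1}(-D)^k=0$ for every $k\ge 1$; recalling $P=\sum_{k=0}^N(-D)^k$ from Corollary \ref{operator D}, only the $k=0$ term survives and $d_\mathfrak{g}^{-1}P=d_\mathfrak{g}^{-1}$. Finally
\begin{align*}
Q^2=P\,d_\mathfrak{g}^{-1}\,P\,d_\mathfrak{g}^{-1}=P\,(d_\mathfrak{g}^{-1}P)\,d_\mathfrak{g}^{-1}=P\,d_\mathfrak{g}^{-1}d_\mathfrak{g}^{-1}=0\,,
\end{align*}
so $d\,Q^2\,d=0$, $S^2=S$, and therefore $\Pi_E^2=\Pi_E$. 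The main obstacle is thus isolating and proving $Q^2=0$; everything else is formal manipulation of the idempotents $Q\,d$ and $d\,Q$.
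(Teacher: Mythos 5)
Your proof is correct and follows essentially the same route as the paper: both reduce the claim to the idempotency $\Pi_E^2=\Pi_E$, obtain $(Qd)^2=Qd$ and $(dQ)^2=dQ$ from the left-inverse property $P\,d_\mathfrak{g}^{-1}d=Id$ on $Im\,d_\mathfrak{g}^{-1}$, and kill the mixed term $d\,Q^2\,d$ using $(d_\mathfrak{g}^{-1})^2=0$. Your packaging via $Q\,d\,Q=Q$ and the slightly stronger identity $Q^2=0$ (obtained through $d_\mathfrak{g}^{-1}P=d_\mathfrak{g}^{-1}$) is a mild streamlining of the paper's term-by-term expansion, not a genuinely different argument.
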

This new operator $Q$ is given by the following composition:
\begin{align*}
&\Lambda^{k+1}\mathfrak{g}^\ast\quad\xrightarrow{   d_\mathfrak{g}^{-1}}\quad\Lambda^k\mathfrak{g}^\ast\quad\xrightarrow{ {P}}\quad\Lambda^{k+1}\mathfrak{g}^\ast\\
&\;\;\beta\underbrace{\longmapsto}_{   d_\mathfrak{g}\alpha=\beta+\xi\,,\,\alpha\in {Im\,d}_\mathfrak{g}^{-1}}\alpha\;\;\;\mapsto\;\;\sum\limits_{k=0}^N(- {D})^k\alpha\;\in {Im\,d}_\mathfrak{g}^{-1}
\end{align*}
\begin{oss}
$(   d_\mathfrak{g}^{-1})^2=0$.

This follows from the fact that $   d_\mathfrak{g}^2=0$, that is $ {Im\,  d_\mathfrak{g}}\subset {Ker\,  d_\mathfrak{g}}$. Taking the orthogonal complements we obtain the reverse inclusions: $( {Ker\,  d_\mathfrak{g}})^\perp\subset( {Im\,d}_\mathfrak{g})^\perp= {Ker\,  d_\mathfrak{g}^{-1}}$, hence
\begin{align*}
    d_\mathfrak{g}^{-1}\alpha\in {Ker\,d}_\mathfrak{g}^{-1}\;\Rightarrow\;(   d_\mathfrak{g}^{-1})^2=0\,.
\end{align*}
\end{oss}
\begin{proof}
Let's compute $\Pi_{ {E}}^2$:
\begin{align*}
\Pi_ {E}^2&=( {Id}- {Q\,d}- {d\,Q})\,( {Id}- {Q\,d}- {d\,Q})\\
&= {Id}- {Q\,d}- {d\,Q}- {Q\,d}+ {Q\,d\,Q\,d}+ {d\,Q^2\,d}- {d\,Q}+\underbrace{ {Q\,d^2\,Q}}_{=0}+ {d\,Q\,d\,Q}\,.
\end{align*}

Moreover:
\begin{itemize}
\item[i.] $ {d\,Q\,d\,Q}= {d(P\,  d_\mathfrak{g}^{-1})d\,Q}= {d\,\underbrace{ P(  d_\mathfrak{g}^{-1}d)}_\text{$Id$ on $ {Im\,  d_\mathfrak{g}^{-1}}$}Q}= {d\,Q}$;
\item[ii.] $ {Q\,d\,Q\,d}=( {P\,  d_\mathfrak{g}^{-1}d})\, {Q\,d}= {Q\,d}$;
\item[iii.] $ {d\,Q^2\,d}= {d\,P\,  d_\mathfrak{g}^{-1}\underbrace{ P\,  d_\mathfrak{g}^{-1}d}}_{\in {Im\,d}_\mathfrak{g}^{-1}}=0$ since $(   d_\mathfrak{g}^{-1})^2=0$.
\end{itemize}
Thanks to all these simplifications we obtain:
\begin{align*}
\Pi_ {E}^2= {Id}- {Q\,d}- {d\,Q}=\Pi_{ {E}}.
\end{align*}
\end{proof}
\begin{teo}
\begin{align*}
 {E}=\lbrace\alpha\in\Omega^\bullet\mid {Q\,d}\alpha+ {d\,Q}\alpha=0\rbrace=\lbrace\alpha\in\Omega^\bullet\mid   d_\mathfrak{g}^{-1} {d}\alpha=0,\,   d_\mathfrak{g}^{-1}\alpha=0\rbrace.
\end{align*}
\end{teo}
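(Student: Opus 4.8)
The plan is to prove the two set inclusions separately. One direction will be essentially free from the factorization $Q=P\,d_\mathfrak{g}^{-1}$, while the other requires probing the defining equation $Q\,d\alpha+d\,Q\alpha=0$ with the operator $d_\mathfrak{g}^{-1}$. Throughout I would use three facts already established above: the factorization $Q=P\,d_\mathfrak{g}^{-1}$; the identities $Im\,d_\mathfrak{g}^{-1}=(Ker\,d_\mathfrak{g})^\perp$ and $Ker\,d_\mathfrak{g}^{-1}=(Im\,d_\mathfrak{g})^\perp$; and the fact that $d_\mathfrak{g}^{-1}d$ restricts to an isomorphism of $Im\,d_\mathfrak{g}^{-1}$ onto itself with inverse $P$, so that $Q\,d=Id$ on $Im\,d_\mathfrak{g}^{-1}$ and $P$ is injective there. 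For the inclusion $\supseteq$, suppose $d_\mathfrak{g}^{-1}\alpha=0$ and $d_\mathfrak{g}^{-1}d\alpha=0$. Then $Q\alpha=P(d_\mathfrak{g}^{-1}\alpha)=0$ and $Q\,d\alpha=P(d_\mathfrak{g}^{-1}d\alpha)=0$, whence $Q\,d\alpha+d\,Q\alpha=0+d(0)=0$, so $\alpha\in E$. This direction costs nothing.

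For the substantial inclusion $\subseteq$, let $\alpha$ satisfy $Q\,d\alpha+d\,Q\alpha=0$. First I would apply $d_\mathfrak{g}^{-1}$ to this identity. Since $Q\,d\alpha\in Im\,Q\subseteq Im\,d_\mathfrak{g}^{-1}=(Ker\,d_\mathfrak{g})^\perp$, and since $Im\,d_\mathfrak{g}\subseteq Ker\,d_\mathfrak{g}$ (from $d_\mathfrak{g}^2=0$) gives, after taking orthogonal complements, $(Ker\,d_\mathfrak{g})^\perp\subseteq(Im\,d_\mathfrak{g})^\perp=Ker\,d_\mathfrak{g}^{-1}$, it follows that $d_\mathfrak{g}^{-1}Q\,d\alpha=0$. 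Hence the identity collapses to $d_\mathfrak{g}^{-1}d\,Q\alpha=0$. Next, using that $Q\alpha\in Im\,d_\mathfrak{g}^{-1}$ and that $Q\,d=Id$ there, I would write $Q\alpha=Q\,d\,Q\alpha=P(d_\mathfrak{g}^{-1}d\,Q\alpha)=P(0)=0$. Substituting $Q\alpha=0$ back into the original equation gives $Q\,d\alpha=0$ as well. Finally, since $d_\mathfrak{g}^{-1}\alpha$ and $d_\mathfrak{g}^{-1}d\alpha$ both lie in $Im\,d_\mathfrak{g}^{-1}$ and $P$ is injective there, the equalities $Q\alpha=P(d_\mathfrak{g}^{-1}\alpha)=0$ and $Q\,d\alpha=P(d_\mathfrak{g}^{-1}d\alpha)=0$ force $d_\mathfrak{g}^{-1}\alpha=0$ and $d_\mathfrak{g}^{-1}d\alpha=0$, which is exactly membership in the right-hand set.

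The main obstacle is the middle step of the $\subseteq$ direction: passing from the weak consequence $d_\mathfrak{g}^{-1}d\,Q\alpha=0$ to the strong conclusion $Q\alpha=0$. The key is to exploit that $Q$ is a left inverse of $d$ on $Im\,d_\mathfrak{g}^{-1}$, so that $Q\,d\,Q=Q$; combined with $Q\,d=P\,d_\mathfrak{g}^{-1}d$, this converts the vanishing of $d_\mathfrak{g}^{-1}d\,Q\alpha$ into the vanishing of $Q\alpha$ itself. Once this is in place, everything else is bookkeeping with the orthogonal decomposition of $\Omega^\bullet$ induced by $d_\mathfrak{g}$ and the injectivity of $P$ on $Im\,d_\mathfrak{g}^{-1}$.
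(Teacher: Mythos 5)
Your proof is correct and follows essentially the same route as the paper's: both hinge on probing the equation $Q\,d\alpha+d\,Q\alpha=0$ with $d_\mathfrak{g}^{-1}$, killing the term $d_\mathfrak{g}^{-1}Q\,d\alpha$ via $(d_\mathfrak{g}^{-1})^2=0$, and exploiting that $P$ and $d_\mathfrak{g}^{-1}d$ are mutually inverse on $Im\,d_\mathfrak{g}^{-1}$. The only difference is organizational: the paper evaluates the surviving terms directly (using $(d_\mathfrak{g}^{-1}d)P=Id$ and a second probe with $d_\mathfrak{g}^{-1}d$, where $d^2=0$ kills the extra term), whereas you detour through $Q\alpha=0$ via $Q\,d\,Q=Q$, back-substitute to get $Q\,d\alpha=0$, and finish with injectivity of $P$ on $Im\,d_\mathfrak{g}^{-1}$ --- the same ingredients in a slightly different order.
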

\begin{proof}$\phantom{=}$\\
\fbox{$\supset$} It is just a direct verification.\\
\fbox{$\subset$} By hypothesis $ {  d_\mathfrak{g}^{-1}Q\,d\,}\alpha+   d_\mathfrak{g}^{-1} {d}\,Q\,\alpha=0$.\\
\begin{itemize}
\item First addend: $ {Q\,d\,}\alpha\in {Im\,  d_\mathfrak{g}^{-1}}$, so that $ {  d_\mathfrak{g}^{-1}Q\,d\,}\alpha=0$ since $(   d_\mathfrak{g}^{-1})^2=0$;
\item Second addend: $   d_\mathfrak{g}^{-1} {d}\,Q\,\alpha=\underbrace{   d_\mathfrak{g}^{-1} {d\, P}}_
{= {Id\;on\;Im\,  d_\mathfrak{g}^{-1}}}   d_\mathfrak{g}^{-1}\alpha= {  d_\mathfrak{g}^{-1}}\alpha$.
\end{itemize}

Hence $   d_\mathfrak{g}^{-1}\alpha=0$. Moreover:
\begin{align*}
0= {  d_\mathfrak{g}^{-1}d}( {Q\,d}\alpha+ {d\,Q}\alpha)=\underbrace{ {  d_\mathfrak{g}^{-1}d\,P}}_{ {Id}}\underbrace{ {  d_\mathfrak{g}^{-1}d}\alpha}_{\in {Im\,  d_\mathfrak{g}^{-1}}}+\underbrace{ {  d_\mathfrak{g}^{-1}d^2Q}}_{=0}\alpha= {  d_\mathfrak{g}^{-1}d}\alpha
\end{align*}
\end{proof}
We have just described the projector $\Pi_ {E}$. We can also take into consideration $\Pi_ {F}$, where
\begin{align*}
 {Id}=\Pi_ {E}+\underbrace{ {Id}-\Pi_ {E}}_{=:\Pi_ {F}}\,.
\end{align*}
\begin{lem}
$\Pi_ {F}$ is a projection as well.
\end{lem}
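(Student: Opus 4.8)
The plan is to deduce the idempotency of $\Pi_F$ directly from the idempotency of $\Pi_E$ established in the previous proposition, rather than re-examining the action of $Q\,d$ and $d\,Q$. Since $\Pi_F$ is defined as the complementary operator $Id-\Pi_E$, the statement is a purely algebraic consequence of the relation $\Pi_E^2=\Pi_E$, and no new manipulation of the operators $Q$, $d$, $d_\mathfrak{g}^{-1}$ is required.

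Concretely, I would expand the square and substitute the known identity $\Pi_E^2=\Pi_E$:
\begin{align*}
\Pi_F^2=(Id-\Pi_E)^2=Id-2\Pi_E+\Pi_E^2=Id-2\Pi_E+\Pi_E=Id-\Pi_E=\Pi_F.
\end{align*}
This already gives the required idempotency $\Pi_F^2=\Pi_F$.

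It is worth recording the complementary structure this produces, since it is what motivates the definition of $\Pi_F$. The two operators satisfy $\Pi_E+\Pi_F=Id$ together with $\Pi_E\Pi_F=\Pi_F\Pi_E=0$, the last identities following again from $\Pi_E^2=\Pi_E$. Consequently $\Omega^\bullet$ splits as the direct sum $Im\,\Pi_E\oplus Im\,\Pi_F$, so that $\Pi_F$ projects onto the complement of the subspace $E$ isolated in the theorem.

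The main point is that there is essentially no obstacle: the single input is the already-proven relation $\Pi_E^2=\Pi_E$, and the conclusion is a one-line expansion valid for any idempotent and its complement. The only thing to avoid is reinserting the full expressions for $Q\,d$ and $d\,Q$, which would trigger an unnecessary recomputation of exactly the cancellations that were already carried out in showing that $\Pi_E$ is a projection.
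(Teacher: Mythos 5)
Your proof is correct and is essentially identical to the paper's: both expand $\Pi_F^2=(Id-\Pi_E)^2$ and invoke the already-established identity $\Pi_E^2=\Pi_E$ to collapse the expression to $Id-\Pi_E=\Pi_F$. The extra remarks on $\Pi_E\Pi_F=\Pi_F\Pi_E=0$ and the induced splitting are harmless additions, not deviations.
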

\begin{proof}\begin{align*}
    \Pi_{ {F}}^2=( {Id}-\Pi_{ {E}})( {Id}-\Pi_ {E})= {Id}-\Pi_ {E}-\Pi_ {E}+\Pi_ {E}^2= {Id}-\Pi_ {E}=\Pi_ {F}\,.
\end{align*}
\end{proof}

\begin{prop}
\begin{align*}
F:=\lbrace\alpha\in\Omega\mid\Pi_F\alpha=\alpha\rbrace= {Im\,  d_\mathfrak{g}^{-1}}+ {Im\,( {d\,  d_\mathfrak{g}^{-1}})}\,.
\end{align*}
\end{prop}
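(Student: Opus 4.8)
The plan is to exploit that $\Pi_F$ is a projection (as just shown in the preceding lemma), so that its set of fixed points $F$ coincides with its image $Im\,\Pi_F$; the statement then reduces to identifying this image. Writing out $\Pi_F = Id - \Pi_E = Q\,d + d\,Q$ with $Q = P\,d_\mathfrak{g}^{-1}$, I would compute
$$\Pi_F\alpha = Q\,d\alpha + d\,Q\alpha = P\,d_\mathfrak{g}^{-1}d\alpha + d\,P\,d_\mathfrak{g}^{-1}\alpha$$
for an arbitrary $\alpha\in\Omega^\bullet$ and read off where the two summands live.

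For the inclusion $F\subseteq Im\,d_\mathfrak{g}^{-1}+Im(d\,d_\mathfrak{g}^{-1})$, the first summand $P\,d_\mathfrak{g}^{-1}d\alpha$ lies in $Im\,d_\mathfrak{g}^{-1}$, because $d_\mathfrak{g}^{-1}$ takes values in $Im\,d_\mathfrak{g}^{-1}$ and $P$ preserves this subspace (it is the sum of the identity with positive powers of $D = d_\mathfrak{g}^{-1}(d-d_\mathfrak{g})$, and the image of $D$ already sits inside $Im\,d_\mathfrak{g}^{-1}$). In particular $Im\,Q\subseteq Im\,d_\mathfrak{g}^{-1}$, so $Q\alpha = P\,d_\mathfrak{g}^{-1}\alpha\in Im\,d_\mathfrak{g}^{-1}$; writing $Q\alpha = d_\mathfrak{g}^{-1}\mu$ gives the second summand $d\,Q\alpha = d\,d_\mathfrak{g}^{-1}\mu\in Im(d\,d_\mathfrak{g}^{-1})$. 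Hence every $\Pi_F\alpha$ decomposes into the two required types.

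For the reverse inclusion it suffices to verify that $\Pi_F$ fixes every element of each image. If $\beta = d_\mathfrak{g}^{-1}\gamma$, then $d_\mathfrak{g}^{-1}\beta = (d_\mathfrak{g}^{-1})^2\gamma = 0$ annihilates the term $d\,Q\beta$, while $Q\,d\beta = P\,d_\mathfrak{g}^{-1}d\beta = \beta$ since $\beta\in Im\,d_\mathfrak{g}^{-1}$ and $P$ inverts $d_\mathfrak{g}^{-1}d$ there; thus $\Pi_F\beta = \beta$. If instead $\beta = d\,d_\mathfrak{g}^{-1}\gamma = d\eta$ with $\eta = d_\mathfrak{g}^{-1}\gamma\in Im\,d_\mathfrak{g}^{-1}$, then $Q\,d\beta = Q\,d^2\eta = 0$, whereas $d\,Q\beta = d\,P\,d_\mathfrak{g}^{-1}d\eta = d\eta = \beta$, again by $P\,d_\mathfrak{g}^{-1}d = Id$ on $Im\,d_\mathfrak{g}^{-1}$; so $\Pi_F\beta = \beta$. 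This places both images inside $F$ and, together with the previous paragraph, yields equality.

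The argument is entirely formal, so I expect no analytic obstacle; the delicate points are purely structural and all available from the earlier results: the identity $(d_\mathfrak{g}^{-1})^2 = 0$, the relation $P\,d_\mathfrak{g}^{-1}d = Id$ on $Im\,d_\mathfrak{g}^{-1}$ (i.e.\ that $P$ inverts $Id+D$), and the observation that $P$ preserves $Im\,d_\mathfrak{g}^{-1}$ so that $Im\,Q\subseteq Im\,d_\mathfrak{g}^{-1}$. The only thing to keep track of carefully is which summand of $\Pi_F = Q\,d + d\,Q$ produces the $Im\,d_\mathfrak{g}^{-1}$ part and which produces the $Im(d\,d_\mathfrak{g}^{-1})$ part, and that $d^2 = 0$ and $(d_\mathfrak{g}^{-1})^2 = 0$ kill exactly the cross terms in each verification.
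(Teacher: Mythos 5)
Your proof is correct and follows essentially the same route as the paper's: both directions reduce to the identities $P\,d_\mathfrak{g}^{-1}d=Id$ on $Im\,d_\mathfrak{g}^{-1}$, $(d_\mathfrak{g}^{-1})^2=0$, $d^2=0$, and the fact that $P$ preserves $Im\,d_\mathfrak{g}^{-1}$, exactly as in the paper. The only cosmetic difference is that you phrase the fixed-point set as $Im\,\Pi_F$ and check the two generating images separately by linearity, while the paper works with the equivalent condition $\Pi_E\alpha=0$ and treats a general sum $d_\mathfrak{g}^{-1}\beta_1+d\,d_\mathfrak{g}^{-1}\beta_2$ in one computation.
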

\begin{proof}
It is sufficient to re-express F in terms of $\Pi_E$:
\begin{align*}
F=\lbrace\alpha\in\Omega\mid\Pi_F\alpha=\alpha\rbrace=\lbrace\alpha\in\Omega\mid\Pi_E\alpha=0\rbrace\,.
\end{align*}
Hence we need to show that 
\begin{align*}
F=\lbrace\alpha\in\Omega\mid\alpha-Q {d}\alpha- {d}Q\alpha=0\rbrace\,.
\end{align*}
\fbox{$\supset$} It is just a direct verification: let's take $\alpha=   d_\mathfrak{g}^{-1}\beta_1+ {d\,d}_\mathfrak{g}^{-1}\beta_2$, then
\begin{align*}
\Pi_E\alpha&=\alpha-Q {d\,d}_\mathfrak{g}^{-1}\beta_1- {d}Q {d\,d}_\mathfrak{g}^{-1}\beta_2=
\alpha-P\underbrace{ {  d_\mathfrak{g}^{-1}d}}_{= {Id}+D}\,   d_\mathfrak{g}^{-1}\beta_1- {d}P\underbrace{ {  d_\mathfrak{g}^{-1}\,d}}_{= {Id}+D}   d_\mathfrak{g}^{-1}\beta_2\\
&=\alpha-\underbrace{P( {Id}+D)}_{= {Id}}   d_\mathfrak{g}^{-1}\beta_1- {d}\underbrace{P( {Id}+D)}_{= {Id}}   d_\mathfrak{g}^{-1}\beta_2=\alpha-   d_\mathfrak{g}^{-1}\beta_1- {d\,d}_\mathfrak{g}^{-1}\beta_2=0\,.
\end{align*}
\fbox{$\subset$} Given a form $\alpha=Q {d}\alpha+ {d}Q\alpha$ by applying the results already seen:
\begin{align*}
\alpha=P\underbrace{   d_\mathfrak{g}^{-1} {d}\alpha}_{\in {Im\,d}_\mathfrak{g}^{-1}}+ {d}\underbrace{P   d_\mathfrak{g}^{-1}\alpha}_{\in {Im\,d}_\mathfrak{g}^{-1}}\in  {Im\,d}_\mathfrak{g}^{-1}+ {Im\,d\,d}_\mathfrak{g}^{-1}
\end{align*}
since $P$ maps $ {Im\,d}_\mathfrak{g}^{-1}$ to $ {Im\,d}_\mathfrak{g}^{-1}$.

\end{proof}
At this point, we would like to describe some relevant spaces of forms on which we will restrict ourselves for our considerations. We will see that such a result can be achieved by applying the homotopical equivalence $ {Id}- {d\,  d_\mathfrak{g}^{-1}}-   d_\mathfrak{g}^{-1} {d}$.\\
First of all, we need to choose some orthogonal complement
%\footnote{The operator $   d_\mathfrak{g}$ comes directly from the structure of the Carnot group we are working on: it depends only on a complement of the horizontal distribution $H$ in $TM$. The next choice we needed was that of a complement of $ {Ker\,d}_\mathfrak{g}^{0}$ which produces $   d_\mathfrak{g}^{-1}$ and as a result also $E,\,F,\,\Pi_E$ and $\Pi_F$. Our last choice of a complement of $ {Im\,d}_\mathfrak{g}$ in $ {Ker\,d}_\mathfrak{g}$ will enable us to define the subcomplex $  {E}_0$ we were looking for.}
of $ {Im\,d}_\mathfrak{g}^{-1}$ inside the $ {Ker\,d}_\mathfrak{g}$, since we are dealing with a complex ($   d_\mathfrak{g}^2=0$).\\
\begin{align*}
\overbrace{ {Im\,d}_\mathfrak{g}\;+\;  E_0}^{ {Ker\,d}_\mathfrak{g}}\;+\;&\big( {Ker\,d}_\mathfrak{g}\big)^{\perp}\\
\longdownarrow\quad\;\;\longdownarrow\quad&\quad\quad\longdownarrow\;\Leftarrow\text{This isomorphism defines }   d_\mathfrak{g}^{-1}\\
0\quad\quad\; 0\quad\quad\;&\;\; {Im\,d}_\mathfrak{g}\;+\;  E_0\;+\;\big( {Ker\,d}_\mathfrak{g}\big)^\perp\\
&\quad\quad\,\longdownarrow\;\,\quad\longdownarrow\quad\quad\;\;\longdownarrow\\
&\quad\quad 0\;\quad\quad\, 0\quad\quad\quad\;\;\;\; 0
\end{align*}
\begin{defin} For $0\le h\le n$ we set
\begin{align*}
  E_0^{h}:= {Ker\,d}_\mathfrak{g}\cap\big( {Im\,d}_\mathfrak{g}\big)^\perp\,\subset\Omega^h\,.
\end{align*}
The elements of $  E_0^h$ will be called intrinsic $h$-forms on ${G}$, or more simply Rumin $h$-forms on $G$.
\end{defin}
% \begin{oss}
% From its very definition, it is clear that the elements in $  {E}_0^h$ are left-invariant, thus it is possible to see it as a space of sections of a fiber subbundle of $\Lambda^h\mathfrak{g}^\ast$, generated by left translations and still denoted by $  {E}_0^h$.

%  In particular, $  {E}_0^h$ inherits from $\Lambda^h\mathfrak{g}^\ast$ the scalar product on the fibers. Moreover, there exists a left-invariant orthonormal basis $\Xi_0^h=\lbrace\xi_j\rbrace$ of $  {E}_0^h$ that is adapted to the filtration of $\mathfrak{g}^\ast$.

% % It is straightforward to see that $  {E}_0^1= {span\,}\lbrace\theta_1,\ldots,\theta_m\rbrace$, the horizontal 1-forms, therefore we can assume that $\xi_j=\theta_j$ without loss of generality.\\
% % We will denote by $N^h_{ {min}}$ and $N^h_{ {max}}$ respectively the lowest and highest weight of forms in $  {E}_0^{h}$ and setting $  {E}_0^{h,p}:=  {E}_0^h\cap\Omega^{h,p}$ we obtain the following decomposition:
% % \begin{align*}
% %   {E}_0^h=\bigoplus\limits_{p=N^h_ {min}}^{N^h_ {max}}  {E}_0^{h,p}\,.
% % \end{align*}
% % Let's stress the fact that $  {E}_0^{h,p}$ has an orthonormal basis given by $\Xi_0^{h,p}:=\Xi_0^h\cap\Lambda^{h,p}$, so that all the elements in $\Xi^{h,p}_0$ have pure weight p.
% \end{oss}
\begin{teo}
Using the same notations and definition above we have:
\begin{itemize}
\item[i.] the de Rham complex $(\Omega^\bullet, {d})$ splits into the direct sum of two sub-complexes $(E^\bullet, {d})$ and $(F^\bullet, {d})$ with
\begin{align*}
E= {Ker\,d}_\mathfrak{g}^{-1}\cap {Ker\,d}_\mathfrak{g}^{-1} {d}\;\text{and}\;F= {Im\,d}_\mathfrak{g}^{-1}+ {Im\,  d_\mathfrak{g}^{-1}d}\,;
\end{align*}
\item[ii.] the operator $\Pi_E= {Id}-Q {d}- {d}Q$, which is the projection on $E$ along $F$, is a homotopical equivalence between the subcomplex $(E^\bullet, {d})$ and the de Rham complex $(\Omega^\bullet, {d})$;
\item[iii.] if we denote by $\Pi_{  E_0}$ the orthogonal projection from $\Omega^\bullet$ to $  E_0^\bullet$, we have:
\begin{align*}
\Pi_{  E_0}= {Id}- {  d_\mathfrak{g}^{-1}  d_\mathfrak{g}}- {  d_\mathfrak{g}  d_\mathfrak{g}^{-1}}\;,\;\Pi_{  E_0^\perp}= {  d_\mathfrak{g}^{-1}  d_\mathfrak{g}}+ {  d_\mathfrak{g}  d_\mathfrak{g}^{-1}}
\end{align*}
and this holds also for covectors, because of the algebraic nature of both $   d_\mathfrak{g}$ and $   d_\mathfrak{g}^{-1}$;
\item[iv.] by defining $ {d}_c:=\Pi_{E_0} {d}\Pi_  {E}$, the exact complex $(  E_0^\bullet, {d}_c)$ is conjugated to the complex $(E^\bullet, {d})$.
\end{itemize}
\end{teo}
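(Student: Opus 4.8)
The plan is to derive all four parts from the algebraic identities already established for $d_\mathfrak{g}$, $d_\mathfrak{g}^{-1}$, $D$, $P$, $Q$, $\Pi_E$, together with the Hodge-type orthogonal decomposition induced by $d_\mathfrak{g}$. For part (i) I would first record that $\Pi_E$ commutes with $d$: since $d^2=0$,
\[
d\,\Pi_E=d(Id-Qd-dQ)=d-dQd=(Id-Qd-dQ)d=\Pi_E\,d.
\]
Hence $E=Im\,\Pi_E$ and $F=Ker\,\Pi_E=Im\,\Pi_F$ are both $d$-invariant, so $(\Omega^\bullet,d)$ splits as the direct sum of the subcomplexes $(E^\bullet,d)$ and $(F^\bullet,d)$; the descriptions of $E$ and $F$ are exactly the earlier Theorem and Proposition. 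For part (ii) the inclusion $\iota\colon E\hookrightarrow\Omega$ and $\Pi_E\colon\Omega\to E$ are chain maps (the latter because $\Pi_E$ commutes with $d$), and $\Pi_E\iota=Id_E$ while $\iota\,\Pi_E=Id_\Omega-(Qd+dQ)$. Because $d_\mathfrak{g}^{-1}$ lowers form-degree by one and $P$ preserves it, $Q=P\,d_\mathfrak{g}^{-1}$ has degree $-1$, so $Qd+dQ$ is a chain homotopy between $Id_\Omega$ and $\iota\,\Pi_E$, exhibiting $\Pi_E$ as a homotopy equivalence.

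For part (iii) I would use the orthogonal decomposition coming from $d_\mathfrak{g}^2=0$,
\[
\Omega^h=Im\,d_\mathfrak{g}\ \oplus\ E_0^h\ \oplus\ (Ker\,d_\mathfrak{g})^\perp,\qquad E_0^h=Ker\,d_\mathfrak{g}\cap(Im\,d_\mathfrak{g})^\perp.
\]
By the definition of $d_\mathfrak{g}^{-1}$ as the inverse of the isomorphism $d_\mathfrak{g}\colon(Ker\,d_\mathfrak{g})^\perp\to Im\,d_\mathfrak{g}$ extended by zero on $(Im\,d_\mathfrak{g})^\perp$, a direct check shows that $d_\mathfrak{g}^{-1}d_\mathfrak{g}$ is the orthogonal projection onto $(Ker\,d_\mathfrak{g})^\perp$ and $d_\mathfrak{g}d_\mathfrak{g}^{-1}$ the orthogonal projection onto $Im\,d_\mathfrak{g}$. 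Since $Im\,d_\mathfrak{g}\subset Ker\,d_\mathfrak{g}$ the two ranges are orthogonal, so $d_\mathfrak{g}^{-1}d_\mathfrak{g}+d_\mathfrak{g}d_\mathfrak{g}^{-1}$ is a projection onto $(E_0^h)^\perp$, giving $\Pi_{E_0}=Id-d_\mathfrak{g}^{-1}d_\mathfrak{g}-d_\mathfrak{g}d_\mathfrak{g}^{-1}$. As all operators involved are algebraic (pointwise on $\Lambda^\bullet\mathfrak{g}^\ast$), the formula holds for covectors as well.

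For part (iv) the crux is to show that $\Pi_{E_0}|_E\colon E\to E_0$ and $\Pi_E|_{E_0}\colon E_0\to E$ are mutually inverse isomorphisms intertwining $d$ and $d_c$. Injectivity of $\Pi_{E_0}|_E$ follows from $Im\,d_\mathfrak{g}^{-1}=(Ker\,d_\mathfrak{g})^\perp\subset F$ and $E\cap F=0$, giving $E\cap(Ker\,d_\mathfrak{g})^\perp=0$; since $E\subset(Im\,d_\mathfrak{g})^\perp$ this forces $Ker(\Pi_{E_0}|_E)=0$. For the inverse, given $\eta\in E_0$ one has $d_\mathfrak{g}^{-1}\eta=0$ (as $\eta\perp Im\,d_\mathfrak{g}$), hence $Q\eta=0$ and $\Pi_E\eta=\eta-P\,d_\mathfrak{g}^{-1}d\eta$ with $P\,d_\mathfrak{g}^{-1}d\eta\in Im\,d_\mathfrak{g}^{-1}=(Ker\,d_\mathfrak{g})^\perp\subset E_0^\perp$; therefore $\Pi_{E_0}\Pi_E\eta=\eta$, so $\Pi_{E_0}|_E$ and $\Pi_E|_{E_0}$ are mutual inverses. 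Using $\Pi_E\Pi_{E_0}|_E=Id_E$ I then compute, for $\alpha\in E$, that $d_c\,\Pi_{E_0}\alpha=\Pi_{E_0}d\,\Pi_E\Pi_{E_0}\alpha=\Pi_{E_0}d\alpha$, i.e. $\Pi_{E_0}d=d_c\,\Pi_{E_0}$ on $E$; this intertwining, together with the isomorphism $\Pi_{E_0}|_E$, shows $(E_0^\bullet,d_c)$ is conjugate to $(E^\bullet,d)$, and $d_c^2=0$ follows by transporting $d^2=0$ through the conjugation.

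The main obstacle is precisely part (iv): verifying that the two projections restrict to mutually inverse isomorphisms between $E$ and $E_0$ and that they intertwine the differentials. Everything there rests on the inclusion $(Ker\,d_\mathfrak{g})^\perp=Im\,d_\mathfrak{g}^{-1}\subset F$ and on $P$ preserving $Im\,d_\mathfrak{g}^{-1}$; once these are in hand the remaining identities are formal consequences of $d^2=d_\mathfrak{g}^2=0$ and the degree count for $Q$, so parts (i)--(iii) should be routine while (iv) carries the real content.
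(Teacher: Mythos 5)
Your proof is correct, and in several places it takes a genuinely different route from the paper's. For parts (i)--(ii) the paper does not use the commutation relation $d\,\Pi_E=\Pi_E\,d$: it proves that $E$ and $F$ are $d$-stable directly from their characterizations ($\xi\in E$ gives $d\xi\in Ker\,d_\mathfrak{g}^{-1}$ since $d_\mathfrak{g}^{-1}d\xi=0$, and $d\xi\in Ker\,d_\mathfrak{g}^{-1}d$ since $d^2=0$), and then establishes the homotopy equivalence by iterating the retraction $r=Id-d_\mathfrak{g}^{-1}d-d\,d_\mathfrak{g}^{-1}$ and showing that the powers $r^k$ stabilize to $\Pi_E$ after finitely many steps, which is where the nilpotency of $D$ enters. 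Your argument --- $d\,\Pi_E=\Pi_E\,d=d-dQd$ from $d^2=0$, together with $\Pi_E\iota=Id_E$ and $Id_\Omega-\iota\Pi_E=Qd+dQ$, exhibiting $Q$ itself as the chain homotopy --- is shorter and more standard; what the paper's iteration buys is the explicit link between $\Pi_E$ and the first-order retraction $r$, which it then uses to present $\Pi_{E_0}$ as the ``zeroth order term'' of $\Pi_E$ in part (iii). For (iii), the paper verifies $\Pi_{E_0}^2=\Pi_{E_0}$ by expansion and identifies its fixed-point set with $E_0$ by applying $d_\mathfrak{g}$ and $d_\mathfrak{g}^{-1}$ to the defining equation, whereas you identify $d_\mathfrak{g}^{-1}d_\mathfrak{g}$ and $d_\mathfrak{g}d_\mathfrak{g}^{-1}$ as the orthogonal projections onto $(Ker\,d_\mathfrak{g})^\perp$ and $Im\,d_\mathfrak{g}$; your route is more conceptual and yields orthogonality of $\Pi_{E_0}$ (not merely idempotency) for free. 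For (iv) the two arguments rest on the same key inclusions, $E_0\subset Ker\,d_\mathfrak{g}^{-1}=Ker\,Q$ and $Im\,Q\subset Im\,d_\mathfrak{g}^{-1}\subset Ker\,\Pi_{E_0}$, but the paper proves the two composition identities $\Pi_{E_0}\Pi_E\Pi_{E_0}=\Pi_{E_0}$ and $\Pi_E\Pi_{E_0}\Pi_E=\Pi_E$, while you prove only the first (in the form $\Pi_{E_0}\Pi_E=Id$ on $E_0$) and replace the second by an injectivity argument through the orthogonal decomposition $\Omega^\bullet=Im\,d_\mathfrak{g}\oplus E_0\oplus(Ker\,d_\mathfrak{g})^\perp$ combined with $E\cap F=0$; both are valid, and your explicit verification of the intertwining $d_c\,\Pi_{E_0}=\Pi_{E_0}\,d$ on $E$ spells out a step the paper only asserts implicitly when concluding that $(E_0^\bullet,d_c)$ is conjugated to $(E^\bullet,d)$.
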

The relations which occur between the three complexes $(\Omega^\bullet, {d})$, $(E^\bullet, {d})$ and $(  E_0^\bullet, {d}_c)$ can be gathered in a simple way in the following commutative diagram:
\begin{center}
\begin{tikzpicture}[node distance=2.8cm, auto]

\pgfmathsetmacro{\shift}{0.3ex}

\node (A) {$\Omega^\bullet$};
\node(B)[right of=A] {$\Omega^\bullet$};
\node (C) [below of=A] {$E^\bullet$};
\node (D) [right of=C] {$E^\bullet$};
\node (E) [below of=C] {$  E_0^\bullet$};
\node (F) [below of=D] {$  E_0^\bullet$};

\draw[->](A) to node {\small $ {d}$}(B);
\draw[transform canvas={xshift=-0.5ex},->] (A) --(C) node[left,midway] {\footnotesize $\Pi_E$};
\draw[transform canvas={xshift=.5ex},->](C) -- (A) node[right,midway] {\footnotesize $\iota$};
\draw[transform canvas={xshift=-0.5ex},->] (B) --(D) node[left,midway] {\footnotesize $\Pi_E$};
\draw[transform canvas={xshift=.5ex},->](D) -- (B) node[right,midway] {\footnotesize $\iota$};
\draw[->](C) to node {\small $ {d}$}(D);
\draw[transform canvas={xshift=-0.5ex},->] (C) --(E) node[left,midway] {\footnotesize $\Pi_{  E_0}$};
\draw[transform canvas={xshift=.5ex},->](E) -- (C) node[right,midway] {\footnotesize $\Pi_E$};
\draw[transform canvas={xshift=-0.5ex},->] (D) --(F) node[left,midway] {\footnotesize $\Pi_{  E_0}$};
\draw[transform canvas={xshift=.5ex},->](F) -- (D) node[right,midway] {\footnotesize $\Pi_E$};
\draw[->](E) to node {\small $ {d}_c$}(F);
\end{tikzpicture}
\end{center}
\begin{proof}$\phantom{=}$
\begin{itemize}
\item[i.] Let us first prove that $E= {Ker\,  d_\mathfrak{g}^{-1}}\cap {Ker\,  d_\mathfrak{g}^{-1}d}$ is a complex: one needs to check that given $\xi\in E$, also $ {d}\xi\in E$. By definition, we have $   d_\mathfrak{g}^{-1}\xi= {  d_\mathfrak{g}^{-1}d}=0$, therefore $ {d}\xi\in {Ker\,  d_\mathfrak{g}^{-1}}$ and as $ {d}^2=0$ we have $ {d}\xi\in {Ker\,  d_\mathfrak{g}^{-1}d}$. The fact that $(F^\bullet, {d})$ is a complex can be proved in a similar way.

Moreover, the orthogonal subdivision $\Omega^\bullet=E^\bullet\oplus F^\bullet$ is due to the fact that $\Pi_E(\Omega^\bullet)=E^\bullet$ and $\Pi_F(\Omega^\bullet)= {Id}-\Pi_E(\Omega^\bullet)=F^\bullet$.
\item[ii.] First we need to check that $\Pi_E$ is a chain map. We have already seen that $\forall\,\alpha\in\Omega^\bullet$ we can write $\alpha=\Pi_E\alpha+\Pi_F\alpha$, so that $ {d}\alpha= {d}\Pi_E\alpha+ {d}\Pi_F\alpha$. We have just proved in point i. that both $E$ and $F$ are complexes, therefore $ {d}\Pi_E\alpha\in E$ and $ {d}\Pi_F\alpha\in F$, hence $\forall\,\alpha\in\Omega^\bullet$:
\begin{align*}
\Pi_E {d}\alpha=\Pi_E {d}\big(\Pi_E\alpha+\Pi_F\alpha\big)=\Pi_E {d}\Pi_E\alpha= {d}\Pi_E\alpha\,.
\end{align*}

Moreover, let's consider the retraction $r= {Id}- {  d_\mathfrak{g}^{-1}d}- {d  d_\mathfrak{g}^{-1}}$ on $\Omega^\bullet$, which preserves $ {d}$ and the cohomology (it is a homotopy itself). One can see that the maps $r^k$ stabilize to $\Pi_E$.

In fact, let's compute explicitly the formula for $\Pi_E$:
\begin{align*}
\Pi_E&= {Id}-Q {d}- {d}Q= {Id}-P {  d_\mathfrak{g}^{-1}d}- {d}P   d_\mathfrak{g}^{-1}\\
&= {Id}-\sum\limits_k\big(   d_\mathfrak{g}^{-1}( {d}-   d_\mathfrak{g}^{-1})\big)^k {  d_\mathfrak{g}^{-1}d}- {d}\sum\limits_k\big( {  d_\mathfrak{g}^{-1}}( {d}-   d_\mathfrak{g}^{-1})\big)^k {  d_\mathfrak{g}^{-1}}\\
&= {Id}-\underbrace{ {  d_\mathfrak{g}^{-1}d}- {d  d_\mathfrak{g}^{-1}}}_{k=0}+\underbrace{ {  d_\mathfrak{g}^{-1}( {d}- {  d_\mathfrak{g}^{-1}}) {  d_\mathfrak{g}^{-1}d}}+ {d  d_\mathfrak{g}^{-1}}( {d}- {  d_\mathfrak{g}}) {  d_\mathfrak{g}^{-1}}}_{k=1}+\\
&\phantom{=}\underbrace{- {  d_\mathfrak{g}^{-1}}( {d}-   d_\mathfrak{g}^{-1})   d_\mathfrak{g}^{-1}( {d}-   d_\mathfrak{g})   d_\mathfrak{g}^{-1} {d}- {d  d_\mathfrak{g}^{-1}}( {d}- {  d_\mathfrak{g}^{-1}}) {  d_\mathfrak{g}^{-1}}( {d-  d_\mathfrak{g}})   d_\mathfrak{g}^{-1}}_{k=2}+\cdots
\end{align*}
while
\begin{align*}
r^2&=( {Id-d  d_\mathfrak{g}^{-1}-  d_\mathfrak{g}^{-1}d})( {Id-d  d_\mathfrak{g}^{-1}-  d_\mathfrak{g}^{-1}d})\\
&= {Id-d  d_\mathfrak{g}^{-1}-  d_\mathfrak{g}^{-1}d-d  d_\mathfrak{g}^{-1}+d  d_\mathfrak{g}^{-1}d  d_\mathfrak{g}^{-1}-  d_\mathfrak{g}^{-1}d+  d_\mathfrak{g}^{-1}d  d_\mathfrak{g}^{-1}d}\\
&= {Id-d  d_\mathfrak{g}^{-1}-  d_\mathfrak{g}^{-1}d-d  d_\mathfrak{g}^{-1}+d  d_\mathfrak{g}^{-1}  d_\mathfrak{g}  d_\mathfrak{g}^{-1}+d  d_\mathfrak{g}^{-1}(d-  d_\mathfrak{g})  d_\mathfrak{g}^{-1}}+\\
&\phantom{=}\; {-  d_\mathfrak{g}^{-1}d+  d_\mathfrak{g}^{-1}  d_\mathfrak{g}  d_\mathfrak{g}^{-1}d+  d_\mathfrak{g}^{-1}(d-  d_\mathfrak{g})  d_\mathfrak{g}^{-1}d}\\
&= {Id-d  d_\mathfrak{g}^{-1}-  d_\mathfrak{g}^{-1}d+d  d_\mathfrak{g}^{-1}(d-  d_\mathfrak{g})  d_\mathfrak{g}^{-1}+  d_\mathfrak{g}^{-1}(d-  d_\mathfrak{g})  d_\mathfrak{g}^{-1}d}\,;
\end{align*}
\begin{align*}
r^3&=( {Id-d  d_\mathfrak{g}^{-1}-  d_\mathfrak{g}^{-1}d})( {Id-d  d_\mathfrak{g}^{-1}-  d_\mathfrak{g}^{-1}d+d  d_\mathfrak{g}^{-1}(d-  d_\mathfrak{g})  d_\mathfrak{g}^{-1}+  d_\mathfrak{g}^{-1}(d-  d_\mathfrak{g})  d_\mathfrak{g}^{-1}d})\\
&=r^2- {d  d_\mathfrak{g}^{-1}+d  d_\mathfrak{g}^{-1}d  d_\mathfrak{g}^{-1}-d  d_\mathfrak{g}^{-1}d  d_\mathfrak{g}^{-1}(d-  d_\mathfrak{g})  d_\mathfrak{g}^{-1}-  d_\mathfrak{g}^{-1}d+}
\\&\phantom{=}\;\, {+  d_\mathfrak{g}^{-1}d  d_\mathfrak{g}^{-1}d-  d_\mathfrak{g}^{-1}d  d_\mathfrak{g}^{-1}(d-  d_\mathfrak{g})  d_\mathfrak{g}^{-1}d}\\
&=r^2- {d  d_\mathfrak{g}^{-1}+d  d_\mathfrak{g}^{-1}  d_\mathfrak{g}  d_\mathfrak{g}^{-1}+d  d_\mathfrak{g}^{-1}(d-  d_\mathfrak{g})  d_\mathfrak{g}^{-1}-d  d_\mathfrak{g}^{-1}  d_\mathfrak{g}  d_\mathfrak{g}^{-1}(d-  d_\mathfrak{g})  d_\mathfrak{g}^{-1}}+\\&\phantom{=}\;\,+ {d  d_\mathfrak{g}^{-1}(d-  d_\mathfrak{g})  d_\mathfrak{g}^{-1}(d-  d_\mathfrak{g})  d_\mathfrak{g}^{-1}-  d_\mathfrak{g}^{-1}d+  d_\mathfrak{g}^{-1}  d_\mathfrak{g}  d_\mathfrak{g}^{-1}d+  d_\mathfrak{g}^{-1}(d-  d_\mathfrak{g})  d_\mathfrak{g}^{-1}d+}\\&\phantom{=}\;\, {-  d_\mathfrak{g}^{-1}  d_\mathfrak{g}  d_\mathfrak{g}^{-1}(d-  d_\mathfrak{g})  d_\mathfrak{g}^{-1}d-  d_\mathfrak{g}^{-1}(d-  d_\mathfrak{g})  d_\mathfrak{g}^{-1}(d-  d_\mathfrak{g})  d_\mathfrak{g}^{-1}d}\\&=r^2 {-d  d_\mathfrak{g}^{-1}(d-  d_\mathfrak{g})  d_\mathfrak{g}^{-1}(d-  d_\mathfrak{g})  d_\mathfrak{g}^{-1}-  d_\mathfrak{g}^{-1}(d-  d_\mathfrak{g})  d_\mathfrak{g}^{-1}(d-  d_\mathfrak{g})  d_\mathfrak{g}^{-1}d}
\end{align*}
and so on.

Therefore, since the sum $\sum_k\big( {  d_\mathfrak{g}^{-1}(d-  d_\mathfrak{g})}\big)^k$ is finite, the maps $r^k$ stabilize to $\Pi_E$ in a finite number of steps.
\item[iii.] $\Pi_{  E_0}$ is the zeroth order term of the homotopical equivalence $\Pi_E$. 

It is indeed a projection:
\begin{align*}
\Pi_{  E_0}^2&=( {Id-  d_\mathfrak{g}  d_\mathfrak{g}^{-1}-  d_\mathfrak{g}^{-1}  d_\mathfrak{g}})( {Id-  d_\mathfrak{g}  d_\mathfrak{g}^{-1}-  d_\mathfrak{g}^{-1}  d_\mathfrak{g}})\\&= {Id-  d_\mathfrak{g}  d_\mathfrak{g}^{-1}-  d_\mathfrak{g}^{-1}  d_\mathfrak{g}-  d_\mathfrak{g}  d_\mathfrak{g}^{-1}+  d_\mathfrak{g}  d_\mathfrak{g}^{-1}  d_\mathfrak{g}  d_\mathfrak{g}^{-1}-  d_\mathfrak{g}^{-1}  d_\mathfrak{g}+  d_\mathfrak{g}^{-1}  d_\mathfrak{g}  d_\mathfrak{g}^{-1}  d_\mathfrak{g}}\\&= {Id-  d_\mathfrak{g}  d_\mathfrak{g}^{-1}-  d_\mathfrak{g}^{-1}  d_\mathfrak{g}}=\Pi_{  E_0}\,.
\end{align*}
We need to check that we have:
\begin{align*}
  E_0=\lbrace\alpha\in\Omega^\bullet\mid\Pi_{  E_0}\alpha=\alpha\rbrace=\lbrace\alpha\in\Omega^\bullet\mid {  d_\mathfrak{g}  d_\mathfrak{g}^{-1}}\alpha+ {  d_\mathfrak{g}^{-1}  d_\mathfrak{g}}\alpha=0\rbrace\,.
\end{align*}
\fbox{$\supset$} It is a trivial verification;\\
\fbox{$\subset$} Let's take $\alpha\in\Omega^\bullet$ such that $ {  d_\mathfrak{g}  d_\mathfrak{g}^{-1}}\alpha+ {  d_\mathfrak{g}^{-1}  d_\mathfrak{g}}\alpha=0$, then
\begin{align*}
& {  d_\mathfrak{g}^{-1}}( {  d_\mathfrak{g}  d_\mathfrak{g}^{-1}}\alpha+ {  d_\mathfrak{g}^{-1}  d_\mathfrak{g}}\alpha)=0\;\Rightarrow\; {  d_\mathfrak{g}^{-1}}\alpha=0\;\\ & {  d_\mathfrak{g}}( {  d_\mathfrak{g}  d_\mathfrak{g}^{-1}}\alpha+ {  d_\mathfrak{g}^{-1}  d_\mathfrak{g}}\alpha)=0\;\Rightarrow\; {  d_\mathfrak{g}}\alpha=0\,.
\end{align*}
\item[iv.] One can show that
\begin{align*}
\Pi_E\Pi_{  E_0}\Pi_E=\Pi_E\;\text{and}\;\Pi_{  E_0}\Pi_E\Pi_{  E_0}=\Pi_{  E_0}\,;
\end{align*}
which implies that $E$ and $  E_0$ are in bijection, as $\Pi_E$ restricted to $  E_0$ and $\Pi_{  E_0}$ restricted to $E$ are inverse maps of each other:
\begin{align*}
\Pi_{  E_0}\Pi_E= {Id}\text{ on }  E_0\;\text{ and }\;\Pi_E\Pi_{  E_0}= {Id}\text{ on }E\,.
\end{align*}

Since $  E_0\subset {Ker\,}Q= {Ker\,}P {  d_\mathfrak{g}^{-1}}$ and $ {Im\,}Q\subset {Im\,  d_\mathfrak{g}^{-1}}\subset {Ker\,}\Pi_{  E_0}$, we have that:
\begin{align*}
\Pi_{  E_0}\Pi_E\Pi_{  E_0}=\Pi_{  E_0}( {Id}-Q {d}- {d}Q)\Pi_{  E_0}=\Pi_{  E_0}\,.
\end{align*}

Lastly, we have :
\begin{align*}
\Pi_E\Pi_{  E_0}\Pi_E&=\Pi_E( {Id}-\Pi_{  E_0^\perp})\Pi_E=\Pi_E-\Pi_E\Pi_{  E_0^{\perp}}\Pi_E\\&=\Pi_E-\Pi_E( {  d_\mathfrak{g}  d_\mathfrak{g}^{-1}+  d_\mathfrak{g}^{-1}  d_\mathfrak{g}})\Pi_E\\&=\Pi_E+\Pi_E {  d_\mathfrak{g}}\underbrace{ {  d_\mathfrak{g}^{-1}}\Pi_E}_{=0}+\Pi_E\underbrace{   d_\mathfrak{g}^{-1} {  d_\mathfrak{g}}\Pi_E}_{\in {Im\,d}_\mathfrak{g}^{-1}\subset F}=\Pi_E\,.
\end{align*}

And this implies that the complex $(E^\bullet, {d})$ is conjugated to the complex $(  E_0^\bullet, {d}_c)$, where $ {d}_c=\Pi_{  E_0} {d}\Pi_E$ (see the commutative diagram).
\end{itemize}
\end{proof}
\section{Hodge duality}
We want to show that the so-called Hodge $\ast$-duality holds on the Rumin complex as well and that on the cohomology classes we will have:
\begin{align*}
\ast\,E_0^{h}=E_0^{n-h}\,,
\end{align*}
where $n$ is the dimension of the nilpotent group $G$ we are considering.

To do so we need some technical results concerning the formal adjoint in $L^2({G},\Omega^\bullet)$ of the differential operators ${d_i}$s for $i=1,\ldots,s$ and the algebraic operator $d_\mathfrak{g}$.
\begin{defin}{Formal adjoint of $d$.}

We begin by recalling the definition of the formal adjoint $\delta$ of the de Rham exterior differential $d$ in $L^2({G},\Omega^\bullet)$:
\begin{align*}
\langle {d}\alpha,\beta\rangle=\langle\alpha,\delta\beta\rangle\;,\;\forall\,\alpha\in\Omega^{p-1}\;\forall\,\beta\in\Omega^{p}\,.
\end{align*}
The main result, which is a direct consequence of the definition, is that $\delta:\Omega^{p}\to\Omega^{p-1}$ satisfies the following equality:
\begin{align*}
\delta=(-1)^{n(p+1)+1}\ast{d}\ast\,,
\end{align*}
where $\ast$ denotes the Hodge-$\ast$ operator.
\end{defin}
\begin{prop}\label{Hodge}
Recalling the formal decomposition by asymptotic weights, for $h=0,1,\ldots,n$ and $i=1,\ldots,s$, we have:
\begin{align*}
\delta_i\big(\Gamma(W_p^h)\big)\subset\Gamma(W_{p-i}^{h-1})\;,\;\delta_i=(-1)^{n(h+1)+1}\ast{d}_i\ast\;\text{ and }\delta_\mathfrak{g}=(-1)^{n(h+1)+1}\ast{d}_\mathfrak{g}\ast\,.
\end{align*}
\end{prop}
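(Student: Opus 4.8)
The plan is to start from the formula for the full de Rham adjoint recorded above, $\delta=(-1)^{n(h+1)+1}\ast d\ast$ on $\Omega^h$, and to decompose it along the weight bigrading, matching the resulting pieces with the formal adjoints $\delta_i$ and $\delta_\mathfrak{g}$ of the components $d_i$ and $d_\mathfrak{g}$ of $d$.

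First I would record how the Hodge star interacts with asymptotic weights. Fixing an orthonormal basis $\lbrace\theta_1,\ldots,\theta_n\rbrace$ of $\mathfrak{g}^\ast$ adapted to $\mathfrak{g}^\ast=W_1\oplus\cdots\oplus W_s$, each $\theta_j$ has a pure weight and the total weight (homogeneous dimension) is $Q=\sum_{j=1}^n w(\theta_j)$. For a basis $h$-form $\theta_I=\theta_{i_1}\wedge\cdots\wedge\theta_{i_h}$ one has $\ast\theta_I=\pm\theta_{I^c}$, and since $w(\theta_I)+w(\theta_{I^c})=Q$, the star sends pure weight $p$ to pure weight $Q-p$. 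Tensoring with $C^\infty(G)$ this gives $\ast\big(\Gamma(W_p^h)\big)=\Gamma(W_{Q-p}^{n-h})$. Reading off the conjugated operators is then immediate: since $d_i$ raises weight by exactly $i$ and degree by $1$, the chain $\Gamma(W_p^h)\xrightarrow{\ast}\Gamma(W_{Q-p}^{n-h})\xrightarrow{d_i}\Gamma(W_{Q-p+i}^{n-h+1})\xrightarrow{\ast}\Gamma(W_{p-i}^{h-1})$ shows that $\ast d_i\ast$ lowers weight by exactly $i$; the same computation, together with the fact that $d_\mathfrak{g}$ does not raise weight (Proposition \ref{dg does not increase} and its corollary), shows that $\ast d_\mathfrak{g}\ast$ does not lower weight.

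Then comes the matching argument. On $\Omega^h$ I write $\delta=(-1)^{n(h+1)+1}\ast d\ast$, substitute $d=d_\mathfrak{g}+\sum_{i=1}^s d_i$, and distribute the degree-dependent but common sign, obtaining $\delta=(-1)^{n(h+1)+1}\ast d_\mathfrak{g}\ast+\sum_{i=1}^s(-1)^{n(h+1)+1}\ast d_i\ast$. On the other hand, the formal adjoint of a sum is the sum of formal adjoints, so $\delta=\delta_\mathfrak{g}+\sum_i\delta_i$. Testing against the pairwise orthogonal weight spaces shows $\delta_i$ lowers weight by exactly $i$: for $\beta\in\Gamma(W_q^h)$ the pairing $\langle d_i\alpha,\beta\rangle$ can be nonzero only when $\alpha\in\Gamma(W_{q-i}^{h-1})$, whence $\delta_i\beta\in\Gamma(W_{q-i}^{h-1})$, which is exactly the inclusion claimed; likewise $\delta_\mathfrak{g}$ does not lower weight. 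Because the shifts $-1,\ldots,-s$ are distinct from one another and from all non-negative shifts, both expressions for $\delta$ are the unique decomposition into weight-shift-homogeneous operators, and comparing the pieces of equal weight shift yields $\delta_i=(-1)^{n(h+1)+1}\ast d_i\ast$ and $\delta_\mathfrak{g}=(-1)^{n(h+1)+1}\ast d_\mathfrak{g}\ast$.

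I expect the last step to be the main obstacle: one must ensure that the formal adjoint genuinely respects the weight bigrading. The delicate point, absent in the Carnot case, is that for a non-stratifiable $\mathfrak{g}$ the operator $d_\mathfrak{g}$ may \emph{strictly} lower weight, so its adjoint $\delta_\mathfrak{g}$ is only weight-non-lowering rather than weight-preserving. What rescues the term-by-term identification is that the strictly weight-raising shifts $+1,\ldots,+s$ occur only in the $d_i$, so after conjugation by $\ast$ the weight-lowering shifts $-1,\ldots,-s$ are carried exactly by the $\ast d_i\ast$ and remain disjoint from the non-lowering contribution of $\ast d_\mathfrak{g}\ast$, making the comparison unambiguous.
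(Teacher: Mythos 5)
Your proof is correct and follows essentially the same route as the paper: the inclusion $\delta_i\big(\Gamma(W_p^h)\big)\subset\Gamma(W_{p-i}^{h-1})$ is obtained via the adjoint pairing against weight-homogeneous test forms, and the identities by writing $\delta$ both as $\delta_\mathfrak{g}+\sum_{i=1}^s\delta_i$ and as $(-1)^{n(h+1)+1}\big(\ast d_\mathfrak{g}\ast+\sum_{i=1}^s\ast d_i\ast\big)$ and matching components. The only difference is that you spell out the bookkeeping the paper leaves implicit — that $\ast$ exchanges pure weight $p$ with $Q-p$, and that the term-by-term identification is legitimate because the shifts $-1,\ldots,-s$ carried by the $\delta_i$ (equivalently by $\ast d_i\ast$) are disjoint from the non-lowering shifts of $\delta_\mathfrak{g}$, even when $d_\mathfrak{g}$ strictly lowers weight in the non-stratifiable case.
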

\begin{proof}
For the first claim, we take $\alpha\in\Gamma(W_p^h)$ and $\beta\in\Omega^{h-1}$ and we have
\begin{align*}
\int\langle\delta_i\alpha,\beta\rangle\,{dV}=\int\langle\alpha,{d_i}\beta\rangle\,{dV}\neq 0
\end{align*}
only if $\beta\in\Gamma(W_{p-i}^{h-1})$.

For the second part, we have to keep in mind the orthogonal decomposition on forms given by the asymptotic weight:
\begin{align*}
\delta_\mathfrak{g}+\sum\limits_{i=1}^s\delta_i=\delta=(-1)^{n(h+1)+1}\ast{d}\ast=(-1)^{n(h+1)+1}\ast{d_\mathfrak{g}}\ast+\sum\limits_{i=1}^s(-1)^{n(h+1)+1}\ast{d_i}\ast\,
\end{align*}
so that $\forall\,i=1,\ldots,s$:
\begin{align*}
\delta_i=(-1)^{n(h+1)+1}\ast{d_i}\ast\,\text{ and }\delta_\mathfrak{g}=(-1)^{n(h+1)+1}\ast{d}_\mathfrak{g}.
\end{align*}
\end{proof}
\begin{cor}
$\delta_\mathfrak{g}$ is an algebraic operator too.
\end{cor}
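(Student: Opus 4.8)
The plan is to read the result straight off the factorisation established in Proposition \ref{Hodge}. Recall that by an \emph{algebraic operator} we mean one that is $C^\infty(G)$-linear, i.e. a bundle endomorphism that does not differentiate the coefficients of a form. This is precisely the sense in which $d_\mathfrak{g}$ is algebraic: by its very definition $d_\mathfrak{g}\big(\sum_j f_j\theta_j\big)=\sum_j f_j\,d_\mathfrak{g}\theta_j$ leaves the smooth coefficients $f_j$ untouched and only acts on the left-invariant basis forms (dually to the Lie bracket).

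First I would observe that the Hodge operator $\ast$ is itself algebraic. Since the metric on $G$ is the left-invariant one for which the basis $\lbrace\theta_j\rbrace$ of $\Lambda^1\mathfrak{g}^\ast$ is orthonormal, each left-invariant basis $h$-form $\theta_I=\theta_{i_1}\wedge\cdots\wedge\theta_{i_h}$ satisfies $\ast\theta_I=\pm\theta_{I^c}$, a left-invariant $(n-h)$-form with constant coefficient. Writing a general form as $\alpha=\sum_I f_I\theta_I$ with $f_I\in C^\infty(G)$ then gives $\ast\alpha=\sum_I f_I\,\ast\theta_I$, so that $\ast(f\alpha)=f\,\ast\alpha$ for every $f\in C^\infty(G)$; hence $\ast$ is $C^\infty(G)$-linear.

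It then suffices to invoke Proposition \ref{Hodge}, which gives $\delta_\mathfrak{g}=(-1)^{n(h+1)+1}\ast d_\mathfrak{g}\ast$ on $h$-forms. This exhibits $\delta_\mathfrak{g}$ as a composition of the three algebraic operators $\ast$, $d_\mathfrak{g}$, $\ast$, up to the degree-dependent scalar $(-1)^{n(h+1)+1}$ which merely rescales each degree component and introduces no differentiation. A composition of $C^\infty(G)$-linear maps is again $C^\infty(G)$-linear, so $\delta_\mathfrak{g}$ is algebraic.

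I do not expect any serious obstacle here: the entire content is the observation that $\ast$ is fibrewise, which in turn rests on the metric being left-invariant, so that $\ast$ sends the orthonormal left-invariant coframe to itself up to sign. The only minor point to keep track of is that the sign prefactor varies with the form degree $h$; but as a (locally constant) scalar it is irrelevant to the algebraic, zeroth-order nature of the operator.
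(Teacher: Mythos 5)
Your proof is correct and follows exactly the route the paper intends: the corollary is read off from the identity $\delta_\mathfrak{g}=(-1)^{n(h+1)+1}\ast d_\mathfrak{g}\ast$ of Proposition \ref{Hodge}, together with the fact that $\ast$ and $d_\mathfrak{g}$ are both $C^\infty(G)$-linear. The paper states the corollary without proof precisely because this composition argument is immediate; you have simply made explicit the check that $\ast$ is fibrewise with respect to the left-invariant orthonormal coframe.
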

We are now ready to prove the main result.
\begin{teo}
Let's take $0\le h\le n$ and let $\ast$ denote the the Hodge-$\ast$ operator, then
\begin{align*}
\ast\,E_0^h=E_0^{n-h}\,.
\end{align*}
\end{teo}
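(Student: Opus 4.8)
The plan is to reinterpret $E_0^h$ as a space of ``algebraically harmonic'' forms and then observe that the Hodge star exchanges its two defining conditions. The first step is to rewrite the orthogonality condition. Since $\delta_\mathfrak{g}$ is the formal $L^2$-adjoint of $d_\mathfrak{g}$, for any $\gamma\in\Omega^h$ we have $\gamma\perp Im\,d_\mathfrak{g}$ if and only if $\langle d_\mathfrak{g}\beta,\gamma\rangle=\langle\beta,\delta_\mathfrak{g}\gamma\rangle=0$ for all $\beta\in\Omega^{h-1}$, i.e.\ if and only if $\delta_\mathfrak{g}\gamma=0$. Because both $d_\mathfrak{g}$ and $\delta_\mathfrak{g}$ are algebraic (they act fibrewise on $\Lambda^\bullet\mathfrak{g}^\ast$, by the corollary to Proposition \ref{Hodge}), this identity descends to the fibres, and we obtain
\begin{align*}
E_0^h=Ker\,d_\mathfrak{g}\cap(Im\,d_\mathfrak{g})^\perp=Ker\,d_\mathfrak{g}\cap Ker\,\delta_\mathfrak{g}\subset\Omega^h\,.
\end{align*}
Thus $E_0^h$ is precisely the space of forms annihilated by both $d_\mathfrak{g}$ and $\delta_\mathfrak{g}$, a characterization which is now manifestly symmetric in the two operators.

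Next I would exploit Proposition \ref{Hodge}, which gives $\delta_\mathfrak{g}=(-1)^{n(h+1)+1}\ast d_\mathfrak{g}\ast$ on $h$-forms, together with the elementary identity $\ast\ast=(-1)^{h(n-h)}\,Id$ on $\Lambda^h\mathfrak{g}^\ast$. Take $\alpha\in E_0^h$, so that $d_\mathfrak{g}\alpha=0$ and $\delta_\mathfrak{g}\alpha=0$; I claim $\ast\alpha\in E_0^{n-h}$. For the codifferential condition, applying the adjoint relation in degree $n-h$ and then $\ast\ast=\pm Id$ gives $\delta_\mathfrak{g}(\ast\alpha)=\pm\ast d_\mathfrak{g}(\ast\ast\alpha)=\pm\ast d_\mathfrak{g}\alpha=0$. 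For the differential condition, the relation in degree $h$ reads $\delta_\mathfrak{g}\alpha=\pm\ast d_\mathfrak{g}\ast\alpha$; since $\delta_\mathfrak{g}\alpha=0$ and $\ast$ is invertible, this forces $d_\mathfrak{g}(\ast\alpha)=0$. Hence $\ast\alpha\in Ker\,d_\mathfrak{g}\cap Ker\,\delta_\mathfrak{g}=E_0^{n-h}$, proving $\ast E_0^h\subseteq E_0^{n-h}$. All the signs here are irrelevant, since only the vanishing of the expressions matters.

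Finally, the same argument with $h$ replaced by $n-h$ yields $\ast E_0^{n-h}\subseteq E_0^h$. Combining the two inclusions with $\ast\ast=(-1)^{h(n-h)}Id$ (so that $\ast\ast$ preserves each subspace) gives $E_0^h=\ast\ast E_0^h\subseteq\ast E_0^{n-h}\subseteq E_0^h$, forcing equality throughout; in particular $\ast E_0^h=E_0^{n-h}$. I expect the only real subtlety to be the first step, namely justifying $(Im\,d_\mathfrak{g})^\perp=Ker\,\delta_\mathfrak{g}$ fibrewise: one must use that $\delta_\mathfrak{g}$ is genuinely algebraic, so that the $L^2$-adjoint relation reduces to a pointwise statement on the finite-dimensional fibres $\Lambda^\bullet\mathfrak{g}^\ast$, after which the whole result is pure finite-dimensional linear algebra together with the standard Hodge-star identities.
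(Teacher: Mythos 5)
Your proof is correct and follows essentially the same route as the paper: rewrite $E_0^h=Ker\,d_\mathfrak{g}\cap Ker\,\delta_\mathfrak{g}$ and then apply the relation $\delta_\mathfrak{g}=\pm\ast d_\mathfrak{g}\ast$ of Proposition \ref{Hodge} together with $\ast\ast=\pm Id$. In fact you are somewhat more thorough than the paper's sketch, which ignores signs, leaves the identification $(Im\,d_\mathfrak{g})^\perp=Ker\,\delta_\mathfrak{g}$ implicit, and only verifies the inclusion $\ast\,E_0^h\subseteq E_0^{n-h}$ rather than the two-sided argument you give for full equality.
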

\begin{proof}
This result is based on the expression of $\delta_\mathfrak{g}$ in Proposition \ref{Hodge}.

To give an idea of the proof, let us forget about the signs and consider $\delta_\mathfrak{g}=\ast{d_\mathfrak{g}}\ast$. By definition of the cohomology classes, we know that $\forall\,\alpha\in E_0$ we have ${d_\mathfrak{g}}\alpha=\delta_\mathfrak{g}\alpha=0$. Therefore, we are left to prove that also $\ast\alpha$ belongs to a cohomology class, i.e. $\ast\alpha\in{Ker\,d_\mathfrak{g}}\cap\mathrm{Ker\,\delta_\mathfrak{g}}$ as an $(n-h)$-form.

We apply the results in Proposition \ref{Hodge} and obtain:
\begin{align*}
{d_\mathfrak{g}}(\ast\alpha)=\pm\ast\delta_\mathfrak{g}\alpha=0\;\text{ and }\;\delta_\mathfrak{g}(\ast)\alpha=\ast{d_\mathfrak{g}}\ast\ast\alpha=\pm\ast{d_\mathfrak{g}}\alpha=0\,.
\end{align*}
\end{proof}

\section{Examples}

\subsection{$N_{4,2}\times\mathbb{R}^2$}

Let us take into consideration the Carnot group $G$ whose Lie algebra is given by $N_{4,2}\times\mathbb{R}^2$, where $N_{4,2}$ is the step 3 nilpotent Lie algebra of dimension 4 (see \cite{Gong_Thesis}). 

The non-trivial brackets of its Lie algebra are the following:
\begin{align*}
    [X_1,X_2]=X_3\;,\;[X_1,X_3]=X_4\,.
\end{align*}
\subsubsection*{Left-invariant vector fields and 1-forms} $\phantom{=}$

The left-invariant vector fields are:
\begin{itemize}
    \item $X_1=\partial_{x_1}-\frac{x_2}{2}\partial_{x_3}-\big(\frac{x_3}{2}+\frac{x_1x_2}{12}\big)\partial_{x_4}$;
    \item $X_2=\partial_{x_2}+\frac{x_1}{2}\partial_{x_3}+\frac{x_1^2}{12}\partial_{x_4}$;
    \item $X_3=\partial_{x_3}+\frac{x_1}{2}\partial_{x_4}$;
    \item $X_4=\partial_{x_4}$;
    \item $X_5=\partial_{x_5}$;
    \item $X_6=\partial_{x_6}$,
\end{itemize}
and the respective left-invariant 1-forms are:
\begin{itemize}
    \item $\theta_1=dx_1$;
    \item $\theta_2=dx_2$;
    \item $\theta_3=dx_3-\frac{x_1}{2}dx_2+\frac{x_2}{2}dx_1$;
    \item $\theta_4=dx_4-\frac{x_1}{2}dx_3+\frac{x_1^2}{6}dx_2+\big(\frac{x_3}{2}-\frac{x_1x_2}{6}\big)dx_1$;
    \item $\theta_5=dx_5$;
    \item $\theta_6=dx_6$.
\end{itemize}

\subsubsection*{Orthonormal basis for left-invariant forms and asymptotic weights } $\phantom{=}$

We will pick $\lbrace\theta_i\rbrace_{i=1}^6$ to be an orthonormal basis of $\Lambda^1\mathfrak{g}^\ast$. We then have the following orthonormal basis for the space of left-invariant forms:
\begin{itemize}
    \item $\lbrace \theta_i\wedge\theta_i\rbrace_{i,j=1,i<j}^6$ orthonormal basis for $\Lambda^2\mathfrak{g}^\ast$;
    \item $\lbrace \theta_i\wedge\theta_j\wedge\theta_k\rbrace_{i,j,k=1,i<j<k}^6$ orthonormal basis for $\Lambda^3\mathfrak{g}^\ast$;
    \item $\lbrace \theta_i\wedge\theta_j\wedge\theta_k\wedge\theta_l\rbrace_{i,j,k,l=1,i<j<k<l}^6$ orthonormal basis for $\Lambda^4\mathfrak{g}^\ast$;
    \item $\lbrace \theta_1\wedge\theta_2\wedge\theta_3\wedge\theta_4\wedge\theta_5,\theta_1\wedge\theta_2\wedge\theta_3\wedge\theta_4\wedge\theta_6,\theta_1\wedge\theta_2\wedge\theta_3\wedge\theta_5\wedge\theta_6,,\theta_1\wedge\theta_3\wedge\theta_4\wedge\theta_5\wedge\theta_6,\theta_2\wedge\theta_3\wedge\theta_4\wedge\theta_5\wedge\theta_6\rbrace$ orthonormal basis for $\Lambda^5\mathfrak{g}^\ast$.
\end{itemize}

Let us study the action of the differential ${d}_\mathfrak{g}$ on the left-invariant 1-forms $\theta_i$ that generate $\Lambda^1\mathfrak{g}^\ast$:
%The space of smooth 1-forms is given by $\Gamma^1\mathfrak{g}^\ast=span_{C^\infty(G)}\lbrace \theta_1,\theta_2,\theta_3,\theta_4,\theta_5,\theta_6\rbrace$ with
\begin{itemize}
    \item $d_\mathfrak{g}\theta_1=d_\mathfrak{g}\theta_2=0$;
    \item $d_\mathfrak{g}\theta_3=-\theta_1\wedge \theta_2$;
    \item $d_\mathfrak{g}\theta_4=-\theta_1\wedge \theta_3$;
    \item $d_\mathfrak{g}\theta_5=d_\mathfrak{g}\theta_6=0$.
\end{itemize}

Let us construct the filtration $F_i$ of left-invariant 1-forms:
\begin{itemize}
    \item $F_0=0$;
    \item $F_1=\lbrace \alpha\in\mathfrak{g}^\ast\mid d_\mathfrak{g}\alpha=0\rbrace=span_{\mathbb{R}}\lbrace \theta_1,\theta_2,\theta_5,\theta_6\rbrace$;
    \item $F_2=\lbrace\alpha\in\mathfrak{g}^\ast\mid d_\mathfrak{g}\alpha\in\Lambda^2F_1\rbrace=span_{\mathbb{R}}\lbrace\theta_1,\theta_2,\theta_3,\theta_5,\theta_6\rbrace$;
    \item $F_3=\lbrace \alpha\in\mathfrak{g}^\ast\mid d_\mathfrak{g}\alpha\in\Lambda^2F_2\rbrace=span_{\mathbb{R}}\lbrace \theta_1,\theta_2,\theta_3,\theta_4,\theta_5,\theta_6\rbrace=\mathfrak{g}^\ast$.
\end{itemize}

Let us now define the asymptotic weights of 1-forms using the subspaces $W_i$ of $\mathfrak{g}^\ast$:
\begin{itemize}
    \item $W_1=F_1=span_{\mathbb{R}}\lbrace \theta_1,\theta_2,\theta_5,\theta_6\rbrace$;
    \item $W_2=F_2\cap(F_1)^\perp=span_{\mathbb{R}}\lbrace \theta_1,\theta_2,\theta_3,\theta_5,\theta_6\rbrace\cap(span_{\mathbb{R}}\lbrace\theta_1,\theta_2,\theta_5,\theta_6\rbrace)^\perp=span_{\mathbb{R}}\lbrace \theta_3\rbrace$;
    \item $W_3=F_3\cap(F_2)^\perp=span_{\mathbb{R}}\lbrace \theta_1,\theta_2,\theta_3,\theta_4,\theta_5,\theta_6\rbrace\cap(span_{\mathbb{R}}\lbrace\theta_1,\theta_2,\theta_3,\theta_5,\theta_6\rbrace)^\perp=span_{\mathbb{R}}\lbrace \theta_4\rbrace$.
\end{itemize}

Therefore the left-invariant 1-forms $\theta_i$ have the following asymptotic weights:
\begin{itemize}
    \item $w(\theta_1)=w(\theta_2)=w(\theta_5)=w(\theta_6)=1$;
    \item $w(\theta_3)=2$;
    \item $w(\theta_4)=3$.
\end{itemize}

\begin{oss}
Let us stress that in this case $G$ is a Carnot group with stratification
\begin{align*}
    \mathfrak{g}=V_1\oplus V_2\oplus V_3
\end{align*}
with $V_1=span_\mathbb{R}\lbrace X_1,X_2,X_5,X_6\rbrace$, $V_2=span_\mathbb{R}\lbrace X_3\rbrace$, and $V_3=span_\mathbb{R}\lbrace X_4\rbrace$.
The asymptotic weights given to the left-invariant 1-forms $\theta_i=X_i^\ast$ using the subspaces $W_j$ coincide with the weights that derive from the stratification $V_j$.

Furthermore, let us notice that indeed the differential $d_\mathfrak{g}$ maps 1-forms of asymptotic weight $w$ to 2-forms of same asymptotic weight $w$:
\begin{itemize}
    \item $d_\mathfrak{g}\theta_3=-\theta_1\wedge\theta_2$, where $w(\theta_3)=2=1+1=w(\theta_1)+w(\theta_2)=w(-\theta_1\wedge\theta_2)$, and
    \item $d_\mathfrak{g}\theta_4=-\theta_1\wedge\theta_3$, where $w(\theta_4)=3=1+2=w(\theta_1)+w(\theta_3)=w(-\theta_1\wedge\theta_3)$.
\end{itemize}
\end{oss}

\subsubsection*{Rumin forms}

Let us study the action of the differential $d_\mathfrak{g}$ on the space of all other left-invariant forms in order to compute the space of all Rumin forms $E_0^\bullet$.

We have already studied the action of $d_\mathfrak{g}$ on left-invariant 1-forms, hence we get that
\begin{align*}
    Ker\,d_\mathfrak{g}\cap\Omega^1=span_{C^\infty(G)}\lbrace \theta_1,\theta_2,\theta_5,\theta_6\rbrace\,,
\end{align*}
and since $Im\,d_\mathfrak{g}\cap\Omega^1=0$, we get
\begin{align*}
    E_0^1=span_{C^\infty(G)}\lbrace \theta_1,\theta_2,\theta_5,\theta_6\rbrace\,.
\end{align*}

Let us now study the action of $d_\mathfrak{g}$ on left-invariant 2-forms $\Lambda^2\mathfrak{g}^\ast$:
\begin{itemize}
    \item $d_\mathfrak{g}(\theta_1\wedge\theta_2)=d_\mathfrak{g}(\theta_1\wedge\theta_3)=d_\mathfrak{g}(\theta_1\wedge \theta_4)=d_\mathfrak{g}(\theta_1\wedge\theta_5)=d_\mathfrak{g}(\theta_1\wedge\theta_6)=d_\mathfrak{g}(\theta_2\wedge\theta_3)=0$;
    \item $d_\mathfrak{g}(\theta_2\wedge\theta_4)=\theta_2\wedge\theta_1\wedge\theta_3=-\theta_1\wedge\theta_2\wedge\theta_3$;
    \item $d_\mathfrak{g}(\theta_2\wedge\theta_5)=d_\mathfrak{g}(\theta_2\wedge\theta_6)=d_\mathfrak{g}(\theta_5\wedge\theta_6)=0$;
    \item $d_\mathfrak{g}(\theta_3\wedge\theta_4)=-\theta_1\wedge\theta_2\wedge\theta_4+\theta_3\wedge\theta_1\wedge\theta_3=-\theta_1\wedge\theta_2\wedge\theta_4$;
    \item $d_\mathfrak{g}(\theta_3\wedge\theta_5)=-\theta_1\wedge\theta_2\wedge\theta_5$;
    \item $d_\mathfrak{g}(\theta_3\wedge\theta_6)=-\theta_1\wedge\theta_2\wedge\theta_6$;
    \item $d_\mathfrak{g}(\theta_4\wedge\theta_5 )=-\theta_1\wedge\theta_3\wedge\theta_5$;
    \item $d_\mathfrak{g}(\theta_4\wedge\theta_6)=-\theta_1\wedge\theta_3\wedge\theta_6$.
\end{itemize}

Therefore 
\begin{align*}
    Ker\,d_\mathfrak{g}\cap\Omega^2=span_{C^\infty(G)}\lbrace &\theta_1\wedge\theta_2,\theta_1\wedge\theta_3,\theta_1\wedge\theta_4,\theta_1\wedge\theta_5,\theta_1\wedge\theta_6,\theta_2\wedge\theta_3,\\
    &\theta_2\wedge\theta_5,\theta_2\wedge\theta_6,\theta_5\wedge\theta_6\rbrace
\end{align*}
and since
\begin{align*}
    Im\,d_\mathfrak{g}\cap\Omega^2=span_{C^\infty(G)}\lbrace \theta_1\wedge\theta_2,\theta_1\wedge\theta_3\rbrace
\end{align*}
we get
\begin{align*}
    E_0^2=&span_{C^\infty(G)}\lbrace \theta_1\wedge\theta_4, \theta_1\wedge\theta_5,\theta_1\wedge\theta_6,\theta_2\wedge\theta_3,\theta_2\wedge\theta_5,\theta_2\wedge\theta_6,\theta_5\wedge\theta_6\rbrace\\=&span_{C^\infty(G)}\underbrace{\lbrace \theta_1\wedge\theta_5,\theta_1\wedge\theta_6,\theta_2\wedge\theta_5,\theta_2\wedge\theta_6,\theta_5\wedge\theta_6\rbrace}_{\text{weight }2}\oplus\\&\oplus span_{C^\infty(G)}\underbrace{\lbrace \theta_2\wedge\theta_3\rbrace}_{\text{weight }3}\oplus span_{C^\infty(G)}\underbrace{\lbrace \theta_1\wedge\theta_4\rbrace}_{\text{weight }4}\,.
\end{align*}

Let us now study the action of $d_\mathfrak{g}$ on left-invariant 3-forms $\Lambda^3\mathfrak{g}^\ast$:
\begin{itemize}
    \item $d_\mathfrak{g}(\theta_1\wedge\theta_2\wedge\theta_3)=d_\mathfrak{g}(\theta_1\wedge\theta_2\wedge\theta_4)=d_\mathfrak{g}(\theta_1\wedge\theta_2\wedge\theta_5)=d_\mathfrak{g}(\theta_1\wedge\theta_2\wedge\theta_6)=d_\mathfrak{g}(\theta_1\wedge\theta_3\wedge\theta_4)=0$;
    \item $d_\mathfrak{g}(\theta_1\wedge\theta_3\wedge\theta_5)=d_\mathfrak{g}(\theta_1\wedge\theta_3\wedge\theta_6)=d_\mathfrak{g}(\theta_1\wedge\theta_4\wedge\theta_5)=d_\mathfrak{g}(\theta_1\wedge\theta_4\wedge\theta_6)=d_\mathfrak{g}(\theta_1\wedge\theta_5\wedge\theta_6)=0$;
    \item $d_\mathfrak{g}(\theta_2\wedge\theta_3\wedge\theta_4)=d_\mathfrak{g}(\theta_2\wedge\theta_3\wedge\theta_5)=d_\mathfrak{g}(\theta_2\wedge\theta_3\wedge\theta_6)=d_\mathfrak{g}(\theta_2\wedge\theta_5\wedge\theta_6)=0$;
    \item $d_\mathfrak{g}(\theta_2\wedge\theta_4\wedge\theta_5)=\theta_2\wedge\theta_1\wedge\theta_3\wedge\theta_5=-\theta_1\wedge\theta_2\wedge\theta_3\wedge\theta_5$;
    \item $d_\mathfrak{g}(\theta_2\wedge\theta_4\wedge\theta_6)=\theta_2\wedge\theta_1\wedge\theta_3\wedge\theta_6=-\theta_1\wedge\theta_2\wedge\theta_3\wedge\theta_6$;
    \item $d_\mathfrak{g}(\theta_3\wedge\theta_4\wedge\theta_5)=-\theta_1\wedge\theta_2\wedge\theta_4\wedge\theta_5$;
    \item $d_\mathfrak{g}(\theta_3\wedge\theta_4\wedge\theta_6)=-\theta_1\wedge\theta_2\wedge\theta_4\wedge\theta_6$;
    \item $d_\mathfrak{g}(\theta_3\wedge\theta_5\wedge\theta_6)=-\theta_1\wedge\theta_2\wedge\theta_5\wedge\theta_6$;
    \item $d_\mathfrak{g}(\theta_4\wedge\theta_5\wedge\theta_6)=-\theta_1\wedge\theta_3\wedge\theta_5\wedge\theta_6$.
\end{itemize}

Therefore
\begin{align*}
    Ker\,d_\mathfrak{g}\cap\Omega^3=span_{C^\infty(G)}\lbrace & \theta_1\wedge\theta_2\wedge\theta_3,\theta_1\wedge\theta_2\wedge\theta_4,\theta_1\wedge\theta_2\wedge\theta_5,\theta_1\wedge\theta_2\wedge\theta_6,\theta_1\wedge\theta_3\wedge\theta_4,\\& \theta_1\wedge\theta_3\wedge\theta_5,\theta_1\wedge\theta_3\wedge\theta_6,\theta_1\wedge\theta_4\wedge\theta_5,\theta_1\wedge\theta_4\wedge\theta_6,\theta_1\wedge\theta_5\wedge\theta_6,\\&
    \theta_2\wedge\theta_3\wedge\theta_4,\theta_2\wedge\theta_3\wedge\theta_5,\theta_2\wedge\theta_3\wedge\theta_6,\theta_2\wedge\theta_5\wedge\theta_6\rbrace
\end{align*}
and since
\begin{align*}
    Im\,d_\mathfrak{g}\cap\Omega^3=span_{C^\infty(G)}\lbrace & \theta_1\wedge\theta_2\wedge\theta_3, \theta_1\wedge\theta_2\wedge\theta_4,\theta_1\wedge\theta_2\wedge\theta_5,\theta_1\wedge\theta_2\wedge\theta_6,\\&\theta_1\wedge\theta_3\wedge\theta_5,\theta_1\wedge\theta_3\wedge\theta_6\rbrace
\end{align*}
we get
\begin{align*}
    E_0^3=span_{C^\infty(G)}\lbrace & \theta_1\wedge\theta_3\wedge\theta_4,\theta_1\wedge\theta_4\wedge\theta_5,\theta_1\wedge\theta_4\wedge\theta_6,\theta_1\wedge\theta_5\wedge\theta_6,\\& \theta_2\wedge\theta_3\wedge\theta_4,\theta_2\wedge\theta_3\wedge\theta_5,\theta_2\wedge\theta_3\wedge\theta_6,\theta_2\wedge\theta_5\wedge\theta_6\rbrace\\
    =span_{C^\infty(G)}\lbrace &\underbrace{\theta_1\wedge\theta_5\wedge\theta_6,\theta_2\wedge\theta_5\wedge\theta_6\rbrace}_{\text{weight }3}\oplus span_{C^\infty(G)}\underbrace{\lbrace \theta_2\wedge\theta_3\wedge\theta_5,\theta_2\wedge\theta_3\wedge\theta_6\rbrace}_{\text{weight }4}\oplus\\\oplus span_{C^\infty(G)}\lbrace &\underbrace{\theta_1\wedge\theta_4\wedge\theta_5,\theta_1\wedge\theta_4\wedge\theta_6\rbrace}_{\text{weight }5}\oplus span_{C^\infty(G)}\underbrace{\lbrace \theta_1\wedge\theta_3\wedge\theta_4,\theta_2\wedge\theta_3\wedge\theta_4\rbrace}_{\text{weight }6}\,.
\end{align*}

Let us study the action of $d_\mathfrak{g}$ on left-invariant 4-forms $\Lambda^4 \mathfrak{g}^\ast$:
\begin{itemize}
    \item $d_\mathfrak{g}(\theta_1\wedge\theta_2\wedge\theta_3\wedge\theta_4)=d_\mathfrak{g}(\theta_1\wedge\theta_2\wedge\theta_3\wedge\theta_5)=d_\mathfrak{g}(\theta_1\wedge\theta_2\wedge\theta_3\wedge\theta_6)=d_\mathfrak{g}(\theta_1\wedge\theta_2\wedge\theta_4\wedge\theta_5)=0$;
    \item $d_\mathfrak{g}(\theta_1\wedge\theta_2\wedge\theta_4\wedge\theta_6)=d_\mathfrak{g}(\theta_1\wedge\theta_2\wedge\theta_5\wedge\theta_6)=d_\mathfrak{g}(\theta_1\wedge\theta_3\wedge\theta_4\wedge\theta_5)=d_\mathfrak{g}(\theta_1\wedge\theta_3\wedge\theta_4\wedge\theta_6)=0$;
    \item $d_\mathfrak{g}(\theta_1\wedge\theta_3\wedge\theta_5\wedge\theta_6)=d_\mathfrak{g}(\theta_1\wedge\theta_4\wedge\theta_5\wedge\theta_6)=d_\mathfrak{g}(\theta_2\wedge\theta_3\wedge\theta_4\wedge\theta_5)=d_\mathfrak{g}(\theta_2\wedge\theta_3\wedge\theta_4\wedge\theta_6)=0$;
    \item $d_\mathfrak{g}(\theta_2\wedge\theta_3\wedge\theta_5\wedge\theta_6)=0$;
    \item $d_\mathfrak{g}(\theta_2\wedge\theta_4\wedge\theta_5\wedge\theta_6)=\theta_2\wedge\theta_1\wedge\theta_3\wedge\theta_5\wedge\theta_6=-\theta_1\wedge\theta_2\wedge\theta_3\wedge\theta_5\wedge\theta_6$;
    \item $d_\mathfrak{g}(\theta_3\wedge\theta_4\wedge\theta_5\wedge\theta_6)=-\theta_1\wedge\theta_2\wedge\theta_4\wedge\theta_5\wedge\theta_6$.
\end{itemize}

Therefore
\begin{align*}
    Ker\,d_\mathfrak{g}\cap\Omega^4=span_{C^\infty(G)}\lbrace &\theta_1\wedge\theta_2\wedge\theta_3\wedge\theta_4,\theta_1\wedge\theta_2\wedge\theta_3\wedge\theta_5,\theta_1\wedge\theta_2\wedge\theta_3\wedge\theta_6, \theta_1\wedge\theta_2\wedge\theta_4\wedge\theta_5,\\&\theta_1\wedge\theta_2\wedge\theta_4\wedge\theta_6,\theta_1\wedge\theta_2\wedge\theta_5\wedge\theta_6, \theta_1\wedge\theta_3\wedge\theta_4\wedge\theta_5,\theta_1\wedge\theta_3\wedge\theta_4\wedge\theta_6,\\ &\theta_1\wedge\theta_3\wedge\theta_5\wedge\theta_6,\theta_1\wedge\theta_4\wedge\theta_5\wedge\theta_6,\theta_2\wedge\theta_3\wedge\theta_4\wedge\theta_5,\theta_2\wedge\theta_3\wedge\theta_4\wedge\theta_6,\\&\theta_2\wedge\theta_3\wedge\theta_5\wedge\theta_6\rbrace
\end{align*}
and since
\begin{align*}
    Im\,d_\mathfrak{g}\cap\Omega^4=span_{C^\infty(G)}\lbrace &\theta_1\wedge\theta_2\wedge\theta_3\wedge\theta_5,\theta_1\wedge\theta_2\wedge\theta_3\wedge\theta_6,\theta_1\wedge\theta_2\wedge\theta_4\wedge\theta_5,\theta_1\wedge\theta_2\wedge\theta_4\wedge\theta_6,\\&\theta_1\wedge\theta_2\wedge\theta_5\wedge\theta_6,\theta_1\wedge\theta_3\wedge\theta_5\wedge\theta_6\rbrace
\end{align*}
we get
\begin{align*}
    E_0^4=&span_{C^\infty(G)}\lbrace  \theta_1\wedge\theta_2\wedge\theta_3\wedge\theta_4,\theta_1\wedge\theta_3\wedge\theta_4\wedge\theta_5,\theta_1\wedge\theta_3\wedge\theta_4\wedge\theta_6,\theta_1\wedge\theta_4\wedge\theta_5\wedge\theta_6,\\&\phantom{span_{C^\infty(G)}\lbrace }\theta_2\wedge\theta_3\wedge\theta_4\wedge\theta_5,\theta_2\wedge\theta_3\wedge\theta_4\wedge\theta_6, \theta_2\wedge\theta_3\wedge\theta_5\wedge\theta_6\rbrace\\=&span_{C^\infty(G)}\lbrace \underbrace{\theta_2\wedge\theta_3\wedge\theta_5\wedge\theta_6\rbrace}_{\text{weight }5}\oplus span_{C^\infty(G)}\underbrace{\lbrace \theta_1\wedge\theta_4\wedge\theta_5\wedge\theta_5\rbrace}_{\text{weight }6}\oplus\\ &\oplus span_{C^\infty(G)}\underbrace{\lbrace \theta_1\wedge\theta_2\wedge\theta_3\wedge\theta_4, \theta_1\wedge\theta_3\wedge\theta_4\wedge\theta_5, \theta_1\wedge\theta_3\wedge\theta_4\wedge\theta_6,}_{\text{weight } 7}\\&\phantom{\oplus \;\,span_{C^\infty(G)}\lbrace } \underbrace{\theta_2\wedge\theta_3\wedge\theta_4\wedge\theta_5,\theta_2\wedge\theta_3\wedge\theta_4\wedge\theta_6\rbrace}_{\text{weight }7}\,.
\end{align*}

Let us study the action of $d_\mathfrak{g}$ on left-invariant 5-forms $\Lambda^5\mathfrak{g}^\ast$:
\begin{itemize}
    \item $d_{\mathfrak{g}}(\theta_1\wedge\theta_2\wedge\theta_3\wedge\theta_4\wedge\theta_5)=d_\mathfrak{g}(\theta_1\wedge\theta_2\wedge\theta_3\wedge\theta_4\wedge\theta_6)=d_\mathfrak{g}(\theta_1\wedge\theta_2\wedge\theta_3\wedge\theta_5\wedge\theta_6)=0$;
     \item $d_{\mathfrak{g}}(\theta_1\wedge\theta_2\wedge\theta_4\wedge\theta_5\wedge\theta_6)=d_\mathfrak{g}(\theta_1\wedge\theta_3\wedge\theta_4\wedge\theta_5\wedge\theta_6)=d_\mathfrak{g}(\theta_2\wedge\theta_3\wedge\theta_4\wedge\theta_5\wedge\theta_6)=0$.
\end{itemize}

Therefore
\begin{align*}
    Ker\,d_\mathfrak{g}\cap\Omega^5=\Omega^5
\end{align*}
and since
\begin{align*}
    Im\,d_\mathfrak{g}\cap\Omega^5=span_{C^\infty(G)}\lbrace \theta_1\wedge\theta_2\wedge\theta_3\wedge\theta_5\wedge\theta_6,\theta_1\wedge\theta_2\wedge\theta_4\wedge\theta_5\wedge\theta_6\rbrace
\end{align*}
we get
\begin{align*}
    E_0^5=span_{C^\infty(G)}\lbrace & \theta_1\wedge\theta_2\wedge\theta_3\wedge\theta_4\wedge\theta_5,\theta_1\wedge\theta_2\wedge\theta_3\wedge\theta_4\wedge\theta_6,\\ &\theta_1\wedge\theta_3\wedge\theta_4\wedge\theta_5\wedge\theta_6,\theta_2\wedge\theta_3\wedge\theta_4\wedge\theta_5\wedge\theta_6\rbrace\,.
\end{align*}

Finally, for 6-forms we have:
\begin{align*}
    E_0^6=\Omega^6=span_{C^\infty(G)}\lbrace \theta_1\wedge\theta_2\wedge\theta_3\wedge\theta_4\wedge\theta_5\wedge\theta_6\rbrace\,.
\end{align*}

\subsubsection*{The Rumin differential on 0-forms}

Let us study the differential $d_c$ when applied to the space of Rumin 0-forms $E_0^0=\Omega^0=C^\infty(G)$. Let us then take $f\in C^\infty(G)$, we then have
\begin{align*}
    d_cf=&\Pi_{E_0}d\Pi_Ef=\Pi_{E_0}d(f-Qdf-dQf)=\Pi_{E_0}d(f-P\underbrace{d_\mathfrak{g}^{-1}df}_{=0}-dP\underbrace{d_\mathfrak{g}^{-1}f}_{=0})=\Pi_{E_0}df\\=&\Pi_{E_0}(X_1f\theta_1+X_2f\theta_2+X_3f\theta_3+X_4f\theta_4+X_5f\theta_5+X_6f\theta_6)\\=&(Id-d_\mathfrak{g}^{-1}d_\mathfrak{g}-d_\mathfrak{g}d_\mathfrak{g}^{-1})(X_1f\theta_1+X_2f\theta_2+X_3f\theta_3+X_4f\theta_4+X_5f\theta_5+X_6f\theta_6)\\=&X_1f\theta_1+X_2f\theta_2+X_3f\theta_3+X_4f\theta_4+X_5f\theta_5+X_6f\theta_6-d_\mathfrak{g}^{-1}d_\mathfrak{g}(X_3f\theta_3)-d_\mathfrak{g}^{-1}d_\mathfrak{g}(X_4f\theta_4)\\=&X_1f\theta_1+X_2f\theta_2+X_5f\theta_5+X_6f\theta_6
\end{align*}

\subsubsection*{The Rumin differential on 1-forms} Let us study the differential 
\begin{align*}
    d_c\colon E_0^{1}\to E_0^2\,.
\end{align*}
In order to do so, we will break the computations into steps.

Given an arbitrary Rumin 1-form $\alpha=f_1\theta_1+f_2\theta_2+f_3\theta_5+f_4\theta_6$, we have
\begin{itemize}
    \item let us compute $d_\mathfrak{g}^{-1}d\alpha$:
\end{itemize}
\begin{align*}
    d_\mathfrak{g}^{-1}d\alpha=&d_\mathfrak{g}^{-1}d(f_1\theta_1+f_2\theta_2+f_3\theta_5+f_4\theta_6)\\=&d_\mathfrak{g}^{-1}\big[-X_2f_1\theta_1\wedge\theta_2-X_3f_1\theta_1\wedge\theta_3+X_1f_2\theta_1\wedge\theta_2+(Im\,d_\mathfrak{g})^\perp \big]\\=&d_\mathfrak{g}^{-1}\big[(X_1f_2-X_2f_1)\theta_1\wedge\theta_2-X_3f_1\theta_1\wedge\theta_3+(Im\,d_\mathfrak{g})^\perp\big]=-(X_1f_2-X_2f_1)\theta_3+X_3f_1\theta_4\,;
\end{align*}
\begin{itemize}
    \item let us compute $d_\mathfrak{g}^{-1}(d-d_\mathfrak{g})d_\mathfrak{g}^{-1}d\alpha$:
\end{itemize}
\begin{align*}
    d_\mathfrak{g}^{-1}(d-d_\mathfrak{g})d_\mathfrak{g}^{-1}d\alpha=&d_\mathfrak{g}^{-1}(d-d_\mathfrak{g})\big[-(X_1f_2-X_2f_1)\theta_3+X_3f_1\theta_4\big]\\=&d_\mathfrak{g}^{-1}\big[-X_1(X_1f_2-X_2f_1)\theta_1\wedge\theta_3+(Im\,d_\mathfrak{g})^\perp\big]=X_1(X_1f_2-X_2f_1)\theta_4\,;
\end{align*}
\begin{itemize}
    \item therefore, we have the following expression for $\Pi_E\alpha$:
\end{itemize}
\begin{align*}
    \Pi_E\alpha=&\alpha-d_\mathfrak{g}^{-1}d\alpha+d_\mathfrak{g}^{-1}(d-d_\mathfrak{g})d_\mathfrak{g}^{-1}d\alpha\\=&f_1\theta_1+f_2\theta_2+f_3\theta_5+f_4\theta_6+(X_1f_2-X_2f_1)\theta_3-X_3f_1\theta_4+X_1(X_1f_2-X_2f_1)\theta_4\\=&f_1\theta_1+f_2\theta_2+(X_1f_2-X_2f_1)\theta_3+[X_1(X_1f_2-X_2f_1)-X_3f_1]\theta_4+f_3\theta_5+f_4\theta_6\,;
\end{align*}
\begin{itemize}
    \item let us now compute $d\Pi_E\alpha$:
\end{itemize}
\begin{align*}
    d\Pi_E\alpha=&d\big[f_1\theta_1+f_2\theta_2+(X_1f_2-X_2f_1)\theta_3+[X_1(X_1f_2-X_2f_1)-X_3f_1]\theta_4+f_3\theta_5+f_4\theta_6\big]\\=&-X_2f_1\theta_1\wedge\theta_2-X_3f_1\theta_1\wedge\theta_3-X_4f_1\theta_1\wedge\theta_4-X_5f_1\theta_1\wedge\theta_5-X_6f_1\theta_1\wedge\theta_6+\\
    &+X_1f_2\theta_1\wedge\theta_2-X_3f_2\theta_2\wedge\theta_3-X_4f_2\theta_2\wedge\theta_4-X_5f_2\theta_2\wedge\theta_5-X_6f_2\theta_2\wedge\theta_6+\\
    &+X_1(X_1f_2-X_2f_1)\theta_1\wedge\theta_3+X_2(X_1f_2-X_2f_1)\theta_2\wedge\theta_3-X_4(X_1f_2-X_2f_1)\theta_3\wedge\theta_4+\\
    &-X_5(X_1f_2-X_2f_1)\theta_3\wedge\theta_5-X_6(X_1f_2-X_2f_1)\theta_3\wedge\theta_6+(X_1f_2-X_2f_1)d_\mathfrak{g}\theta_3+\\&+X_1[X_1(X_1f_2-X_2f_1)-X_3f_1]\theta_1\wedge\theta_4+X_2[X_1(X_1f_2-X_2f_1)-X_3f_1]\theta_2\wedge\theta_4+\\
    &+X_3[X_1(X_1f_2-X_2f_1)-X_3f_1]\theta_3\wedge\theta_4-X_5[X_1(X_1f_2-X_2f_1)-X_3f_1]\theta_4\wedge\theta_5+\\
    &-X_6[X_1(X_1f_2-X_2f_1)-X_3f_1]\theta_4\wedge\theta_6+[X_1(X_1f_2-X_2f_1)-X_3f_1]d_\mathfrak{g}\theta_4+\\&+X_1f_3\theta_1\wedge\theta_5+X_2f_3\theta_2\wedge\theta_5+X_3f_3\theta_3\wedge\theta_5+X_4f_3\theta_4\wedge\theta_5-X_6f_3\theta_5\wedge\theta_6+\\
    &+X_1f_4\theta_1\wedge\theta_6+X_2f_4\theta_2\wedge\theta_6+X_3f_4\theta_3\wedge\theta_6+X_4f_4\theta_4\wedge\theta_6+X_5f_4\theta_5\wedge\theta_6\end{align*}
    \begin{align*}
    \phantom{d\Pi_E\alpha}=&(X_1f_2-X_2f_1)\theta_1\wedge\theta_2+[X_1(X_1f_2-X_2f_1)-X_3f_1]\theta_1\wedge\theta_3+(X_1f_2-X_2f_1)d_\mathfrak{g}\theta_3+\\&+[X_1^2(X_1f_2-X_2f_1)-X_1X_3f_1-X_4f_1]\theta_1\wedge\theta_4+(X_1f_3-X_5f_1)\theta_1\wedge\theta_5+\\&+(X_1f_4-X_6f_1)\theta_1\wedge\theta_6+[X_2(X_1f_2-X_2f_1)-X_3f_2]\theta_2\wedge\theta_3+\\&+[X_2X_1(X_1f_2-X_2f_1)-X_2X_3f_1-X_4f_2]\theta_2\wedge\theta_4+(X_2f_3-X_5f_2)\theta_2\wedge\theta_5+\\&+(X_2f_4-X_6f_2)\theta_2\wedge\theta_6+[X_3X_1(X_1f_2-X_2f_1)-X_3^2f_1-X_4(X_1f_2-X_2f_1)]\theta_3\wedge\theta_4+\\&+[X_3f_3-X_5(X_1f_2-X_2f_1)]\theta_3\wedge\theta_5+[X_3f_4-X_6(X_1f_2-X_2f_1)]\theta_3\wedge\theta_6+\\&+[X_4f_3-X_5X_1(X_1f_2-X_2f_1)+X_5X_3f_1]\theta_4\wedge\theta_5+[X_1(X_1f_2-X_2f_1)-X_3f_1]d_\mathfrak{g}\theta_4+\\&+[X_4f_4-X_6X_1(X_1f_2-X_2f_1)+X_6X_3f_1]\theta_4\wedge\theta_6+(X_5f_4-X_6f_3)\theta_5\wedge\theta_6\,;
\end{align*}
\begin{itemize}
    \item before finishing the computations, we will study the action of the projection $\Pi_{E_0}=Id-d_\mathfrak{g}^{-1}d_\mathfrak{g}-d_\mathfrak{g}d_\mathfrak{g}^{-1}$ on each left-invariant 2-form:
    \begin{itemize}
        \item $\Pi_{E_0}(\theta_1\wedge\theta_2)=\theta_1\wedge\theta_2-d_\mathfrak{g}^{-1}d_\mathfrak{g}(\theta_1\wedge\theta_2)-d_\mathfrak{g}d_\mathfrak{g}^{-1}(\theta_1\wedge\theta_2)=0$;
        \item $\Pi_{E_0}(\theta_1\wedge\theta_3)=\theta_1\wedge\theta_3-d_\mathfrak{g}^{-1}d_\mathfrak{g}(\theta_1\wedge\theta_3)-d_\mathfrak{g}d_\mathfrak{g}^{-1}(\theta_1\wedge\theta_3)=0$;
        \item $\Pi_{E_0}(\theta_1\wedge\theta_4)=\theta_1\wedge\theta_4-d_\mathfrak{g}^{-1}d_\mathfrak{g}(\theta_1\wedge\theta_4)-d_\mathfrak{g}d_\mathfrak{g}^{-1}(\theta_1\wedge\theta_4)=\theta_1\wedge\theta_4$;
         \item $\Pi_{E_0}(\theta_1\wedge\theta_5)=\theta_1\wedge\theta_5-d_\mathfrak{g}^{-1}d_\mathfrak{g}(\theta_1\wedge\theta_5)-d_\mathfrak{g}d_\mathfrak{g}^{-1}(\theta_1\wedge\theta_5)=\theta_1\wedge\theta_5$;
          \item $\Pi_{E_0}(\theta_1\wedge\theta_6)=\theta_1\wedge\theta_6-d_\mathfrak{g}^{-1}d_\mathfrak{g}(\theta_1\wedge\theta_6)-d_\mathfrak{g}d_\mathfrak{g}^{-1}(\theta_1\wedge\theta_6)=\theta_1\wedge\theta_6$;
           \item $\Pi_{E_0}(\theta_2\wedge\theta_3)=\theta_2\wedge\theta_3-d_\mathfrak{g}^{-1}d_\mathfrak{g}(\theta_2\wedge\theta_3)-d_\mathfrak{g}d_\mathfrak{g}^{-1}(\theta_2\wedge\theta_3)=\theta_2\wedge\theta_3$;
           \item $\Pi_{E_0}(\theta_2\wedge\theta_4)=\theta_2\wedge\theta_4-d_\mathfrak{g}^{-1}d_\mathfrak{g}(\theta_2\wedge\theta_4)-d_\mathfrak{g}d_\mathfrak{g}^{-1}(\theta_2\wedge\theta_4)=0$;
           \item $\Pi_{E_0}(\theta_2\wedge\theta_5)=\theta_2\wedge\theta_5-d_\mathfrak{g}^{-1}d_\mathfrak{g}(\theta_2\wedge\theta_5)-d_\mathfrak{g}d_\mathfrak{g}^{-1}(\theta_2\wedge\theta_5)=\theta_2\wedge\theta_5$;
           \item $\Pi_{E_0}(\theta_2\wedge\theta_6)=\theta_2\wedge\theta_6-d_\mathfrak{g}^{-1}d_\mathfrak{g}(\theta_2\wedge\theta_6)-d_\mathfrak{g}d_\mathfrak{g}^{-1}(\theta_2\wedge\theta_6)=\theta_2\wedge\theta_6$;
           \item $\Pi_{E_0}(\theta_3\wedge\theta_4)=\theta_3\wedge\theta_4-d_\mathfrak{g}^{-1}d_\mathfrak{g}(\theta_3\wedge\theta_4)-d_\mathfrak{g}d_\mathfrak{g}^{-1}(\theta_3\wedge\theta_4)=0$;
           \item $\Pi_{E_0}(\theta_3\wedge\theta_5)=\theta_3\wedge\theta_5-d_\mathfrak{g}^{-1}d_\mathfrak{g}(\theta_3\wedge\theta_5)-d_\mathfrak{g}d_\mathfrak{g}^{-1}(\theta_3\wedge\theta_5)=0$;
           \item $\Pi_{E_0}(\theta_3\wedge\theta_6)=\theta_3\wedge\theta_6-d_\mathfrak{g}^{-1}d_\mathfrak{g}(\theta_3\wedge\theta_6)-d_\mathfrak{g}d_\mathfrak{g}^{-1}(\theta_3\wedge\theta_6)=0$;
           \item $\Pi_{E_0}(\theta_4\wedge\theta_5)=\theta_4\wedge\theta_5-d_\mathfrak{g}^{-1}d_\mathfrak{g}(\theta_4\wedge\theta_5)-d_\mathfrak{g}d_\mathfrak{g}^{-1}(\theta_4\wedge\theta_5)=0$;
           \item $\Pi_{E_0}(\theta_4\wedge\theta_6)=\theta_4\wedge\theta_6-d_\mathfrak{g}^{-1}d_\mathfrak{g}(\theta_4\wedge\theta_6)-d_\mathfrak{g}d_\mathfrak{g}^{-1}(\theta_4\wedge\theta_6)=0$;
            \item $\Pi_{E_0}(\theta_5\wedge\theta_6)=\theta_5\wedge\theta_6-d_\mathfrak{g}^{-1}d_\mathfrak{g}(\theta_5\wedge\theta_6)-d_\mathfrak{g}d_\mathfrak{g}^{-1}(\theta_5\wedge\theta_6)=\theta_5\wedge\theta_6$.
    \end{itemize}
    \item Finally, using all the previous computations, we can express explicitly the differential operator $d_c\alpha=\Pi_{E_0}d\Pi_E$, so that
    \begin{align*}
        d_c\alpha=&d_c(f_1\theta_1+f_2\theta_2+f_3\theta_5+f_4\theta_6)\\
        =&[X_1^2(X_1f_2-X_2f_1)-X_1X_3f_1-X_4f_1]\theta_1\wedge\theta_4+(X_1f_3-X_5f_1)\theta_1\wedge\theta_5+\\&+(X_1f_4-X_6f_1)\theta_1\wedge\theta_6+[X_2(X_1f_2-X_2f_1)-X_3f_2]\theta_2\wedge\theta_3+\\&+(X_2f_3-X_5f_2)\theta_2\wedge\theta_5+(X_2f_4-X_6f_2)\theta_2\wedge\theta_6+(X_5f_4-X_6f_3)\theta_5\wedge\theta_6\,.
    \end{align*}
\end{itemize}

% We can express the differential $d_c\colon E_0^1\to E_0^2$ in matrix form. If we consider the ordered bases $\lbrace \theta_1,\theta_2,\theta_5,\theta_6\rbrace$ for $E_0^1$ and $\lbrace \theta_1\wedge\theta_5, \theta_1\wedge\theta_6,\theta_2\wedge\theta_5, \theta_2\wedge\theta_6,\theta_5\wedge\theta_6, \theta_2\wedge\theta_3,\theta_1\wedge\theta_4\rbrace$ for $E_0^2$, we have
% \begin{align*}
%     \begin{bmatrix}
%     -X_5 & 0 & X_1 & 0 \\
%     -X_6 & 0 & 0 & X_1 \\
%     0 & -X_5 & X_2 & 0 \\
%     0 & -X_6 & 0 & X_2\\
%     0 & 0 & -X_6 & X_5 \\
%     -X_2^2 & X_2X_1 -X_3 & 0 & 0& \\
%     -X_1^2X_2-X_1X_3-X_4 & X_1^3 & 0 & 0
%     \end{bmatrix}
% \end{align*}

\subsubsection*{The Rumin differential on 2-forms} Let us study the differential 
\begin{align*}
    d_c\colon E_0^{2}\to E_0^3\,.
\end{align*}
In order to do so, we will break the computations into steps.

Given an arbitrary Rumin 2-form $\beta=g_1\theta_1\wedge\theta_5+g_2\theta_1\wedge\theta_6+g_3\theta_2\wedge\theta_5+g_4\theta_2\wedge\theta_6+g_5\theta_5\wedge\theta_6+g_6\theta_2\wedge\theta_3+g_7\theta_1\wedge\theta_4$, we have
\begin{itemize}
    \item let us compute $d_\mathfrak{g}^{-1}d\beta$:
\end{itemize}
\begin{align*}
    d_\mathfrak{g}^{-1}d\beta=&d_\mathfrak{g}^{-1}d(g_1\theta_1\wedge\theta_5+g_2\theta_1\wedge\theta_6+g_3\theta_2\wedge\theta_5+g_4\theta_2\wedge\theta_6+g_5\theta_5\wedge\theta_6+g_6\theta_2\wedge\theta_3+g_7\theta_1\wedge\theta_4)\\=&d_\mathfrak{g}^{-1}\big[-X_2g_1\theta_1\wedge\theta_2\wedge\theta_5-X_3g_1\theta_1\wedge\theta_3\wedge\theta_5-X_2g_2\theta_1\wedge\theta_2\wedge\theta_6-X_3g_2\theta_1\wedge\theta_3\wedge\theta_6+\\+&X_1g_3\theta_1\wedge\theta_2\wedge\theta_5+X_1g_4\theta_1\wedge\theta_2\wedge\theta_6+X_1g_6\theta_1\wedge\theta_2\wedge\theta_3-X_2g_7\theta_1\wedge\theta_2\wedge\theta_4+(Im\,d_\mathfrak{g})^\perp \big]\\=&d_\mathfrak{g}^{-1}\big[(X_1g_3-X_2g_1)\theta_1\wedge\theta_2\wedge\theta_5-X_3g_1\theta_1\wedge\theta_3\wedge\theta_5+(X_1g_4-X_2g_2)\theta_1\wedge\theta_2\wedge\theta_6+\\&-X_3g_2\theta_1\wedge\theta_3\wedge\theta_6+X_1g_6\theta_1\wedge\theta_2\wedge\theta_3-X_2g_7\theta_1\wedge\theta_2\wedge\theta_4+(Im\,d_\mathfrak{g})^\perp \big]\\=&-(X_1g_3-X_2g_1)\theta_3\wedge\theta_5+X_3g_1\theta_4\wedge\theta_5-(X_1g_4-X_2g_2)\theta_3\wedge\theta_6+X_3g_2\theta_4\wedge\theta_6+\\&-X_1g_6\theta_2\wedge\theta_4+X_2g_7\theta_3\wedge\theta_4\,;
\end{align*}
\begin{itemize}
    \item let us compute $d_\mathfrak{g}^{-1}(d-d_\mathfrak{g})d_\mathfrak{g}^{-1}d\beta$:
\end{itemize}
\begin{align*}
    d_\mathfrak{g}^{-1}(d-d_\mathfrak{g})d_\mathfrak{g}^{-1}d\beta=&d_\mathfrak{g}^{-1}(d-d_\mathfrak{g})\big[-(X_1g_3-X_2g_1)\theta_3\wedge\theta_5+X_3g_1\theta_4\wedge\theta_5-(X_1g_4-X_2g_2)\theta_3\wedge\theta_6+\\&+X_3g_2\theta_4\wedge\theta_6-X_1g_6\theta_2\wedge\theta_4+X_2g_7\theta_3\wedge\theta_4\big]\\=&d_\mathfrak{g}^{-1}\big[-X_1(X_1g_3-X_2g_1)\theta_1\wedge\theta_3\wedge\theta_5-X_1(X_1g_4-X_2g_2)\theta_1\wedge\theta_3\wedge\theta_6+\\&-X_1^2g_6\theta_1\wedge\theta_2\wedge\theta_4
    +(Im\,d_\mathfrak{g})^\perp\big]\\=&X_1(X_1g_3-X_2g_1)\theta_4\wedge\theta_5+X_1(X_1g_4-X_2g_2)\theta_4\wedge\theta_6+X_1^2g_6\theta_3\wedge\theta_4\,;
\end{align*}
\begin{itemize}
    \item therefore, we have the following expression for $\Pi_E\beta$:
\end{itemize}
\begin{align*}
    \Pi_E\beta=&\beta-d_\mathfrak{g}^{-1}d\beta+d_\mathfrak{g}^{-1}(d-d_\mathfrak{g})d_\mathfrak{g}^{-1}d\beta\\=&g_1\theta_1\wedge\theta_5+g_2\theta_1\wedge\theta_6+g_3\theta_2\wedge\theta_5+g_4\theta_2\wedge\theta_6+g_5\theta_5\wedge\theta_6+g_6\theta_2\wedge\theta_3+g_7\theta_1\wedge\theta_4+\\&+(X_1g_3-X_2g_1)\theta_3\wedge\theta_5-X_3g_1\theta_4\wedge\theta_5+(X_1g_4-X_2g_2)\theta_3\wedge\theta_6-X_3g_2\theta_4\wedge\theta_6+\\&+X_1g_6\theta_2\wedge\theta_4-X_2g_7\theta_3\wedge\theta_4+X_1(X_1g_3-X_2g_1)\theta_4\wedge\theta_5+X_1(X_1g_4-X_2g_2)\theta_4\wedge\theta_6+\\&+X_1^2g_6\theta_3\wedge\theta_4\\=&g_1\theta_1\wedge\theta_5+g_2\theta_1\wedge\theta_6+g_3\theta_2\wedge\theta_5+g_4\theta_2\wedge\theta_6+g_5\theta_5\wedge\theta_6+g_6\theta_2\wedge\theta_3+g_7\theta_1\wedge\theta_4+\\&+(X_1g_3-X_2g_1)\theta_3\wedge\theta_5+[X_1(X_1g_3-X_2g_1)-X_3g_1]\theta_4\wedge\theta_5+(X_1g_4-X_2g_2)\theta_3\wedge\theta_6+\\&+[X_1(X_1g_4-X_2g_2)-X_3g_2]\theta_4\wedge\theta_6+X_1g_6\theta_2\wedge\theta_4+(X_1^2g_6-X_2g_7)\theta_3\wedge\theta_4\,;
\end{align*}
\begin{itemize}
    \item let us now compute $d\Pi_E\beta$:
\end{itemize}
\begin{align*}
    d\Pi_E\beta=&d\big[g_1\theta_1\wedge\theta_5+g_2\theta_1\wedge\theta_6+g_3\theta_2\wedge\theta_5+g_4\theta_2\wedge\theta_6+g_5\theta_5\wedge\theta_6+g_6\theta_2\wedge\theta_3+g_7\theta_1\wedge\theta_4+\\&+(X_1g_3-X_2g_1)\theta_3\wedge\theta_5+[X_1(X_1g_3-X_2g_1)-X_3g_1]\theta_4\wedge\theta_5+(X_1g_4-X_2g_2)\theta_3\wedge\theta_6+\\&+[X_1(X_1g_4-X_2g_2)-X_3g_2]\theta_4\wedge\theta_6+X_1g_6\theta_2\wedge\theta_4+(X_1^2g_6-X_2g_7)\theta_3\wedge\theta_4\big]\\
    =&-X_2g_1\theta_1\wedge\theta_2\wedge\theta_5-X_3g_1\theta_1\wedge\theta_3\wedge\theta_5-X_4g_1\theta_1\wedge\theta_4\wedge\theta_5+X_6g_1\theta_1\wedge\theta_5\wedge\theta_6+\\&
    -X_2g_2\theta_1\wedge\theta_2\wedge\theta_6-X_3g_2\theta_1\wedge\theta_3\wedge\theta_6-X_4g_2\theta_1\wedge\theta_4\wedge\theta_6-X_5g_2\theta_1\wedge\theta_5\wedge\theta_6+\\&
    +X_1g_3\theta_1\wedge\theta_2\wedge\theta_5-X_3g_3\theta_2\wedge\theta_3\wedge\theta_5-X_4g_3\theta_2\wedge\theta_4\wedge\theta_5+X_6g_3\theta_2\wedge\theta_5\wedge\theta_6+\\&
    +X_1g_4\theta_1\wedge\theta_2\wedge\theta_6-X_3g_4\theta_2\wedge\theta_3\wedge\theta_6-X_4g_4\theta_2\wedge\theta_4\wedge\theta_6-X_5g_4\theta_2\wedge\theta_5\wedge\theta_6+\\&
    +X_1g_5\theta_1\wedge\theta_5\wedge\theta_6+X_2g_5\theta_2\wedge\theta_5\wedge\theta_6+X_3g_5\theta_3\wedge\theta_5\wedge\theta_6+X_4g_5\theta_4\wedge\theta_5\wedge\theta_6+\\&+X_1g_6\theta_1\wedge\theta_2\wedge\theta_3+X_4g_6\theta_2\wedge\theta_3\wedge\theta_4+X_5g_6\theta_2\wedge\theta_3\wedge\theta_5+X_6g_6\theta_2\wedge\theta_3\wedge\theta_6+\\&-X_2g_7\theta_1\wedge\theta_2\wedge\theta_4-X_3g_7\theta_1\wedge\theta_3\wedge\theta_4+X_5g_7\theta_1\wedge\theta_4\wedge\theta_5+X_6g_7\theta_1\wedge\theta_4\wedge\theta_6+\\&+
    X_1(X_1g_3-X_2g_1)\theta_1\wedge\theta_3\wedge\theta_5+X_2(X_1g_3-X_2g_1)\theta_2\wedge\theta_3\wedge\theta_5+\\&
    -X_4(X_1g_3-X_2g_1)\theta_3\wedge\theta_4\wedge\theta_5+X_6(X_1g_3-X_2g_1)\theta_3\wedge\theta_5\wedge\theta_6+\\&
    +[X_1^2(X_1g_3-X_2g_1)-X_1X_3g_1]\theta_1\wedge\theta_4\wedge\theta_5+X_2[X_1(X_1g_3-X_2g_1)-X_3g_1]\theta_2\wedge\theta_4\wedge\theta_5+\\&+
    X_3[X_1(X_1g_3-X_2g_1)-X_3g_1]\theta_3\wedge\theta_4\wedge\theta_5+X_6[X_1(X_1g_3-X_2g_1)-X_3g_1]\theta_4\wedge\theta_5\wedge\theta_6+\\&
    +X_1(X_1g_4-X_2g_2)\theta_1\wedge\theta_3\wedge\theta_6+X_2(X_1g_4-X_2g_2)\theta_2\wedge\theta_3\wedge\theta_6+\\&-X_4(X_1g_4-X_2g_2)\theta_3\wedge\theta_4\wedge\theta_6-X_5(X_1g_4-X_2g_2)\theta_3\wedge\theta_5\wedge\theta_6+\\&+X_1[X_1(X_1g_4-X_2g_2)-X_3g_2]\theta_1\wedge\theta_4\wedge\theta_6+X_2[X_1(X_1g_4-X_2g_2)-X_3g_2]\theta_2\wedge\theta_4\wedge\theta_6+\\&+X_3[X_1(X_1g_4-X_2g_2)-X_3g_2]\theta_3\wedge\theta_4\wedge\theta_6-X_5[X_1(X_1g_4-X_2g_2)-X_3g_2]\theta_4\wedge\theta_5\wedge\theta_6+\\&+X_1^2g_6\theta_1\wedge\theta_2\wedge\theta_4-X_3X_1g_6\theta_2\wedge\theta_3\wedge\theta_4+X_5X_1g_6\theta_2\wedge\theta_4\wedge\theta_5+X_6X_1g_6\theta_2\wedge\theta_4\wedge\theta_6+\\&+X_1(X_1^2g_6-X_2g_7)\theta_1\wedge\theta_3\wedge\theta_4+X_2(X_1^2g_6-X_2g_7)\theta_2\wedge\theta_3\wedge\theta_4+\\&+X_5(X_1^2g_6-X_2g_7)\theta_3\wedge\theta_4\wedge\theta_5+X_6(X_1^2g_6-X_2g_7)\theta_3\wedge\theta_4\wedge\theta_6+\\&+(X_1g_3-X_2g_1)d_\mathfrak{g}(\theta_3\wedge\theta_5)+[X_1(X_1g_3-X_2g_1)-X_3g_1]d_\mathfrak{g}(\theta_4\wedge\theta_5)+(X_1g_4-X_2g_2)d_\mathfrak{g}(\theta_3\wedge\theta_6)+\\&+[X_1(X_1g_4-X_2g_2)-X_3g_2]d_\mathfrak{g}(\theta_4\wedge\theta_6)+X_1g_6d_\mathfrak{g}(\theta_2\wedge\theta_4)+(X_1^2g_6-X_2g_7)d_\mathfrak{g}(\theta_3\wedge\theta_4)\,;
\end{align*}
\begin{itemize}
    \item before finishing the computations, we will study the action of the projection $\Pi_{E_0}=Id-d_\mathfrak{g}^{-1}d_\mathfrak{g}-d_\mathfrak{g}d_\mathfrak{g}^{-1}$ on each left-invariant 3-form:
    \begin{itemize}
        \item $\Pi_{E_0}(\theta_1\wedge\theta_2\wedge\theta_3)=\theta_1\wedge\theta_2\wedge\theta_3-d_\mathfrak{g}^{-1}d_\mathfrak{g}(\theta_1\wedge\theta_2\wedge\theta_3)-d_\mathfrak{g}d_\mathfrak{g}^{-1}(\theta_1\wedge\theta_2\wedge\theta_3)=0$;
        \item $\Pi_{E_0}(\theta_1\wedge\theta_2\wedge\theta_4)=\theta_1\wedge\theta_2\wedge\theta_4-d_\mathfrak{g}^{-1}d_\mathfrak{g}(\theta_1\wedge\theta_2\wedge\theta_4)-d_\mathfrak{g}d_\mathfrak{g}^{-1}(\theta_1\wedge\theta_2\wedge\theta_4)=0$;
        \item $\Pi_{E_0}(\theta_1\wedge\theta_2\wedge\theta_5)=\theta_1\wedge\theta_2\wedge\theta_5-d_\mathfrak{g}^{-1}d_\mathfrak{g}(\theta_1\wedge\theta_2\wedge\theta_5)-d_\mathfrak{g}d_\mathfrak{g}^{-1}(\theta_1\wedge\theta_2\wedge\theta_5)=0$;
        \item $\Pi_{E_0}(\theta_1\wedge\theta_2\wedge\theta_6)=\theta_1\wedge\theta_2\wedge\theta_6-d_\mathfrak{g}^{-1}d_\mathfrak{g}(\theta_1\wedge\theta_2\wedge\theta_6)-d_\mathfrak{g}d_\mathfrak{g}^{-1}(\theta_1\wedge\theta_2\wedge\theta_6)=0$;
        \item $\Pi_{E_0}(\theta_1\wedge\theta_3\wedge\theta_4)=\theta_1\wedge\theta_3\wedge\theta_4-d_\mathfrak{g}^{-1}d_\mathfrak{g}(\theta_1\wedge\theta_3\wedge\theta_4)-d_\mathfrak{g}d_\mathfrak{g}^{-1}(\theta_1\wedge\theta_3\wedge\theta_4)=\theta_1\wedge\theta_3\wedge\theta_4$;
         \item $\Pi_{E_0}(\theta_1\wedge\theta_3\wedge\theta_5)=\theta_1\wedge\theta_3\wedge\theta_5-d_\mathfrak{g}^{-1}d_\mathfrak{g}(\theta_1\wedge\theta_3\wedge\theta_5)-d_\mathfrak{g}d_\mathfrak{g}^{-1}(\theta_1\wedge\theta_3\wedge\theta_5)=0$;
          \item $\Pi_{E_0}(\theta_1\wedge\theta_3\wedge\theta_6)=\theta_1\wedge\theta_3\wedge\theta_6-d_\mathfrak{g}^{-1}d_\mathfrak{g}(\theta_1\wedge\theta_3\wedge\theta_6)-d_\mathfrak{g}d_\mathfrak{g}^{-1}(\theta_1\wedge\theta_3\wedge\theta_6)=0$;
        \item $\Pi_{E_0}(\theta_1\wedge\theta_4\wedge\theta_5)=\theta_1\wedge\theta_4\wedge\theta_5-d_\mathfrak{g}^{-1}d_\mathfrak{g}(\theta_1\wedge\theta_4\wedge\theta_5)-d_\mathfrak{g}d_\mathfrak{g}^{-1}(\theta_1\wedge\theta_4\wedge\theta_5)=\theta_1\wedge\theta_4\wedge\theta_5$;
          \item $\Pi_{E_0}(\theta_1\wedge\theta_4\wedge\theta_6)=\theta_1\wedge\theta_4\wedge\theta_6-d_\mathfrak{g}^{-1}d_\mathfrak{g}(\theta_1\wedge\theta_4\wedge\theta_6)-d_\mathfrak{g}d_\mathfrak{g}^{-1}(\theta_1\wedge\theta_4\wedge\theta_6)=\theta_1\wedge\theta_4\wedge\theta_6$;
         \item $\Pi_{E_0}(\theta_1\wedge\theta_5\wedge\theta_6)=\theta_1\wedge\theta_5\wedge\theta_6-d_\mathfrak{g}^{-1}d_\mathfrak{g}(\theta_1\wedge\theta_5\wedge\theta_6)-d_\mathfrak{g}d_\mathfrak{g}^{-1}(\theta_1\wedge\theta_5\wedge\theta_6)=\theta_1\wedge\theta_5\wedge\theta_6$;
           \item $\Pi_{E_0}(\theta_2\wedge\theta_3\wedge\theta_4)=\theta_2\wedge\theta_3\wedge\theta_4-d_\mathfrak{g}^{-1}d_\mathfrak{g}(\theta_2\wedge\theta_3\wedge\theta_4)-d_\mathfrak{g}d_\mathfrak{g}^{-1}(\theta_2\wedge\theta_3\wedge\theta_4)=\theta_2\wedge\theta_3\wedge\theta_4$;
            \item $\Pi_{E_0}(\theta_2\wedge\theta_3\wedge\theta_5)=\theta_2\wedge\theta_3\wedge\theta_5-d_\mathfrak{g}^{-1}d_\mathfrak{g}(\theta_2\wedge\theta_3\wedge\theta_5)-d_\mathfrak{g}d_\mathfrak{g}^{-1}(\theta_2\wedge\theta_3\wedge\theta_5)=\theta_2\wedge\theta_3\wedge\theta_5$;
             \item $\Pi_{E_0}(\theta_2\wedge\theta_3\wedge\theta_6)=\theta_2\wedge\theta_3\wedge\theta_6-d_\mathfrak{g}^{-1}d_\mathfrak{g}(\theta_2\wedge\theta_3\wedge\theta_6)-d_\mathfrak{g}d_\mathfrak{g}^{-1}(\theta_2\wedge\theta_3\wedge\theta_6)=\theta_2\wedge\theta_3\wedge\theta_6$;
           \item $\Pi_{E_0}(\theta_2\wedge\theta_4\wedge\theta_5)=\theta_2\wedge\theta_4\wedge\theta_5-d_\mathfrak{g}^{-1}d_\mathfrak{g}(\theta_2\wedge\theta_4\wedge\theta_5)-d_\mathfrak{g}d_\mathfrak{g}^{-1}(\theta_2\wedge\theta_4\wedge\theta_5)=0$;
            \item $\Pi_{E_0}(\theta_2\wedge\theta_4\wedge\theta_6)=\theta_2\wedge\theta_4\wedge\theta_6-d_\mathfrak{g}^{-1}d_\mathfrak{g}(\theta_2\wedge\theta_4\wedge\theta_6)-d_\mathfrak{g}d_\mathfrak{g}^{-1}(\theta_2\wedge\theta_4\wedge\theta_6)=0$;
           \item $\Pi_{E_0}(\theta_2\wedge\theta_5\wedge\theta_6)=\theta_2\wedge\theta_5\wedge\theta_6-d_\mathfrak{g}^{-1}d_\mathfrak{g}(\theta_2\wedge\theta_5\wedge\theta_6)-d_\mathfrak{g}d_\mathfrak{g}^{-1}(\theta_2\wedge\theta_5\wedge\theta_6)=\theta_2\wedge\theta_5\wedge\theta_6$;
           \item $\Pi_{E_0}(\theta_3\wedge\theta_4\wedge\theta_5)=\theta_3\wedge\theta_4\wedge\theta_5-d_\mathfrak{g}^{-1}d_\mathfrak{g}(\theta_3\wedge\theta_4\wedge\theta_5)-d_\mathfrak{g}d_\mathfrak{g}^{-1}(\theta_3\wedge\theta_4\wedge\theta_5)=0$;
           \item $\Pi_{E_0}(\theta_3\wedge\theta_4\wedge\theta_6)=\theta_3\wedge\theta_4\wedge\theta_6-d_\mathfrak{g}^{-1}d_\mathfrak{g}(\theta_3\wedge\theta_4\wedge\theta_6)-d_\mathfrak{g}d_\mathfrak{g}^{-1}(\theta_3\wedge\theta_4\wedge\theta_6)=0$;
           \item $\Pi_{E_0}(\theta_3\wedge\theta_5\wedge\theta_6)=\theta_3\wedge\theta_5\wedge\theta_6-d_\mathfrak{g}^{-1}d_\mathfrak{g}(\theta_3\wedge\theta_5\wedge\theta_6)-d_\mathfrak{g}d_\mathfrak{g}^{-1}(\theta_3\wedge\theta_5\wedge\theta_6)=0$;
           \item $\Pi_{E_0}(\theta_4\wedge\theta_5\wedge\theta_6)=\theta_4\wedge\theta_5\wedge\theta_6-d_\mathfrak{g}^{-1}d_\mathfrak{g}(\theta_4\wedge\theta_5\wedge\theta_6)-d_\mathfrak{g}d_\mathfrak{g}^{-1}(\theta_4\wedge\theta_5\wedge\theta_6)=0$.
    \end{itemize}
    \item Finally, using all the previous computations, we can express explicitly the differential operator $d_c\beta=\Pi_{E_0}d\Pi_E\beta$, so that
    \begin{align*}
        d_c\beta=&d_c(g_1\theta_1\wedge\theta_5+g_2\theta_1\wedge\theta_6+g_3\theta_2\wedge\theta_5+g_4\theta_2\wedge\theta_6+g_5\theta_5\wedge\theta_6+g_6\theta_2\wedge\theta_3+g_7\theta_1\wedge\theta_4)\\
        =&-X_4g_1\theta_1\wedge\theta_4\wedge\theta_5+X_6g_1\theta_1\wedge\theta_5\wedge\theta_6
   -X_4g_2\theta_1\wedge\theta_4\wedge\theta_6-X_5g_2\theta_1\wedge\theta_5\wedge\theta_6+\\&
    -X_3g_3\theta_2\wedge\theta_3\wedge\theta_5+X_6g_3\theta_2\wedge\theta_5\wedge\theta_6
    -X_3g_4\theta_2\wedge\theta_3\wedge\theta_6-X_5g_4\theta_2\wedge\theta_5\wedge\theta_6+\\&
    +X_1g_5\theta_1\wedge\theta_5\wedge\theta_6+X_2g_5\theta_2\wedge\theta_5\wedge\theta_6+X_4g_6\theta_2\wedge\theta_3\wedge\theta_4+X_5g_6\theta_2\wedge\theta_3\wedge\theta_5+\\&+X_6g_6\theta_2\wedge\theta_3\wedge\theta_6-X_3g_7\theta_1\wedge\theta_3\wedge\theta_4+X_5g_7\theta_1\wedge\theta_4\wedge\theta_5+X_6g_7\theta_1\wedge\theta_4\wedge\theta_6+\\&+X_2(X_1g_3-X_2g_1)\theta_2\wedge\theta_3\wedge\theta_5    +[X_1^2(X_1g_3-X_2g_1)-X_1X_3g_1]\theta_1\wedge\theta_4\wedge\theta_5+\\&
    +X_2(X_1g_4-X_2g_2)\theta_2\wedge\theta_3\wedge\theta_6+X_1[X_1(X_1g_4-X_2g_2)-X_3g_2]\theta_1\wedge\theta_4\wedge\theta_6+\\&+X_1(X_1^2g_6-X_2g_7)\theta_1\wedge\theta_3\wedge\theta_4+X_2(X_1^2g_6-X_2g_7)\theta_2\wedge\theta_3\wedge\theta_4-X_3X_1g_6\theta_2\wedge\theta_3\wedge\theta_4\\=&
    (X_6g_1-X_5g_2+X_1g_5)\theta_1\wedge\theta_5\wedge\theta_6+(X_6g_3-X_5g_4+X_2g_5)\theta_2\wedge\theta_5\wedge\theta_6+\\&+(X_5g_6+X_2(X_1g_3-X_2g_1)-X_3g_3)\theta_2\wedge\theta_3\wedge\theta_5+\\&+(X_6g_6+X_2(X_1g_4-X_2g_2)-X_3g_4)\theta_2\wedge\theta_3\wedge\theta_6+\\&+(X_5g_7+X_1^2(X_1g_3-X_2g_1)-X_1X_3g_1-X_4g_1)\theta_1\wedge\theta_4\wedge\theta_5+\\&+(X_6g_7+X_1^2(X_1g_4-X_2g_2)-X_1X_3g_2-X_4g_2)\theta_1\wedge\theta_4\wedge\theta_6+\\&+(X_1^3g_6-X_1X_2g_7-X_3g_7)\theta_1\wedge\theta_3\wedge\theta_4+(X_4g_6+X_2X_1^2g_6-X_2^2g_7-X_3X_1g_6)\theta_2\wedge\theta_3\wedge\theta_4\,.
    \end{align*}
\end{itemize}

% We can express the differential $d_c\colon E_0^2\to E_0^3$ in matrix form. If we consider the ordered bases $\lbrace \theta_1\wedge\theta_5, \theta_1\wedge\theta_6,\theta_2\wedge\theta_5, \theta_2\wedge\theta_6,\theta_5\wedge\theta_6, \theta_2\wedge\theta_3,\theta_1\wedge\theta_4\rbrace$ for $E_0^2$ and $\lbrace \theta_1\wedge\theta_5\wedge\theta_6,\theta_2\wedge\theta_5\wedge\theta_6,\theta
% _2\wedge\theta_3\wedge\theta_5,\theta
% _2\wedge\theta_3\wedge\theta_6,\theta_1\wedge\theta_4\wedge\theta_5,\theta_1\wedge\theta_4\wedge\theta_6,\theta_1\wedge\theta_3\wedge\theta_4,\theta_2\wedge\theta_3\wedge\theta_4\rbrace$, we have
% \begin{align*}
%     \begin{bmatrix}
%     X_6 & -X_5 & 0 & 0 & X_1 & 0 & 0\\
%     0 & 0 & X_6 & -X_4 & X_2 & 0 & 0\\
%     -X_2^2 & 0 & X_2X_1-X_3 & 0 & 0 & X_5 & 0\\
%      0 & -X_2^2 & 0 & X_2X_1-X_3 & 0 & X_6 & 0\\
%     -X_1^2X_2-X_1X_3-X_4 & 0 & X_1^3 & 0 & 0 & 0 & X_5\\
%     0 & -X_1^2X_2-X_1X_3-X_4 & 0 & X_1^3 & 0 & 0 & X_6\\
%     0 & 0 & 0 & 0 & 0 & X_1^3 & -X_1X_2-X_3\\
%      0 & 0 & 0 & 0 & 0 & X_4+X_2X_1^2-X_3X_1 & -X_2^2
%     \end{bmatrix}\,.
% \end{align*}

\subsection{$N_{6,3,2}$}\label{Esempio non-Carnot}

Let us take into consideration the nilpotent group $G$ whose Lie algebra is denoted as $N_{6,3,2}$ in \cite{Gong_Thesis}, with the following non-trivial Lie brackets: 
\begin{align*}
    [X_1,X_2]=X_3\;,\;[X_1,X_3]=X_4\;,\;[X_5,X_6]=X_4\,.
\end{align*}

Let us notice that its asymptotic cone has Lie algebra isomorphic to $N_{4,2}\times\mathbb{R}^2$. 

The non-trivial brackets of its Lie algebra are the following:
\subsubsection*{Left-invariant vector fields and 1-forms} $\phantom{=}$

The left-invariant vector fields are:
\begin{itemize}
    \item $X_1=\partial_{x_1}-\frac{x_2}{2}\partial_{x_3}-\big(\frac{x_3}{2}+\frac{x_1x_2}{12}\big)\partial_{x_4}$;
    \item $X_2=\partial_{x_2}+\frac{x_1}{2}\partial_{x_3}+\frac{x_1^2}{12}\partial_{x_4}$;
    \item $X_3=\partial_{x_3}+\frac{x_1}{2}\partial_{x_4}$;
    \item $X_4=\partial_{x_4}$;
    \item $X_5=\partial_{x_5}-\frac{x_6}{2}\partial_{x_4}$;
    \item $X_6=\partial_{x_6}+\frac{x_5}{2}\partial_{x_4}$,
\end{itemize}
and the respective left-invariant 1-forms are:
\begin{itemize}
    \item $\theta_1=dx_1$;
    \item $\theta_2=dx_2$;
    \item $\theta_3=dx_3-\frac{x_1}{2}dx_2+\frac{x_2}{2}dx_1$;
    \item $\theta_4=dx_4-\frac{x_1}{2}dx_3+\frac{x_1^2}{6}dx_2+\big(\frac{x_3}{2}-\frac{x_1x_2}{6}\big)dx_1+\frac{x_6}{2}dx_5-\frac{x_5}{2}dx_6$;
    \item $\theta_5=dx_5$;
    \item $\theta_6=dx_6$.
\end{itemize}

\subsubsection*{Orthonormal basis for left-invariant forms and asymptotic weights } $\phantom{=}$

We will pick $\lbrace\theta_i\rbrace_{i=1}^6$ to be an orthonormal basis of $\Lambda^1\mathfrak{g}^\ast$. We then have the following orthonormal bases for the space of left-invariant forms:
\begin{itemize}
    \item $\lbrace \theta_i\wedge\theta_i\rbrace_{i,j=1,i<j}^6$ orthonormal basis for $\Lambda^2\mathfrak{g}^\ast$;
    \item $\lbrace \theta_i\wedge\theta_j\wedge\theta_k\rbrace_{i,j,k=1,i<j<k}^6$ orthonormal basis for $\Lambda^3\mathfrak{g}^\ast$;
    \item $\lbrace \theta_i\wedge\theta_j\wedge\theta_k\wedge\theta_l\rbrace_{i,j,k,l=1,i<j<k<l}^6$ orthonormal basis for $\Lambda^4\mathfrak{g}^\ast$;
    \item $\lbrace \theta_1\wedge\theta_2\wedge\theta_3\wedge\theta_4\wedge\theta_5,\theta_1\wedge\theta_2\wedge\theta_3\wedge\theta_4\wedge\theta_6,\theta_1\wedge\theta_2\wedge\theta_3\wedge\theta_5\wedge\theta_6,,\theta_1\wedge\theta_3\wedge\theta_4\wedge\theta_5\wedge\theta_6,\theta_2\wedge\theta_3\wedge\theta_4\wedge\theta_5\wedge\theta_6\rbrace$ orthonormal basis for $\Lambda^5\mathfrak{g}^\ast$.
\end{itemize}

Let us study the action of the differential ${d}_\mathfrak{g}$ on the left-invariant 1-forms that form the orthonormal basis of $\Lambda^1\mathfrak{g}^\ast$:
%The space of smooth 1-forms is given by $\Gamma^1\mathfrak{g}^\ast=span_{C^\infty(G)}\lbrace \theta_1,\theta_2,\theta_3,\theta_4,\theta_5,\theta_6\rbrace$ with
\begin{itemize}
    \item $d_\mathfrak{g}\theta_1=d_\mathfrak{g}\theta_2=0$;
    \item $d_\mathfrak{g}\theta_3=-\theta_1\wedge \theta_2$;
    \item $d_\mathfrak{g}\theta_4=-\theta_1\wedge \theta_3-\theta_5\wedge\theta_6$;
    \item $d_\mathfrak{g}\theta_5=d_\mathfrak{g}\theta_6=0$.
\end{itemize}

Let us construct the filtration $F_i$ of left-invariant 1-forms:
\begin{itemize}
    \item $F_0=0$;
    \item $F_1=\lbrace \alpha\in\mathfrak{g}^\ast\mid d_\mathfrak{g}\alpha=0\rbrace=span_{\mathbb{R}}\lbrace \theta_1,\theta_2,\theta_5,\theta_6\rbrace$;
    \item $F_2=\lbrace\alpha\in\mathfrak{g}^\ast\mid d_\mathfrak{g}\alpha\in\Lambda^2F_1\rbrace=span_{\mathbb{R}}\lbrace\theta_1,\theta_2,\theta_3,\theta_5,\theta_6\rbrace$;
    \item $F_3=\lbrace \alpha\in\mathfrak{g}^\ast\mid d_\mathfrak{g}\alpha\in\Lambda^2F_2\rbrace=span_{\mathbb{R}}\lbrace \theta_1,\theta_2,\theta_3,\theta_4,\theta_5,\theta_6\rbrace=\mathfrak{g}^\ast$.
\end{itemize}

Let us now define the asymptotic weights of 1-forms using the subspaces $W_i$ of $\mathfrak{g}^\ast$:
\begin{itemize}
    \item $W_1=F_1=span_{\mathbb{R}}\lbrace \theta_1,\theta_2,\theta_5,\theta_6\rbrace$;
    \item $W_2=F_2\cap(F_1)^\perp=span_{\mathbb{R}}\lbrace \theta_1,\theta_2,\theta_3,\theta_5,\theta_6\rbrace\cap(span_{\mathbb{R}}\lbrace\theta_1,\theta_2,\theta_5,\theta_6\rbrace)^\perp=span_{\mathbb{R}}\lbrace \theta_3\rbrace$;
    \item $W_3=F_3\cap(F_2)^\perp=span_{\mathbb{R}}\lbrace \theta_1,\theta_2,\theta_3,\theta_4,\theta_5,\theta_6\rbrace\cap(span_{\mathbb{R}}\lbrace\theta_1,\theta_2,\theta_3,\theta_5,\theta_6\rbrace)^\perp=span_{\mathbb{R}}\lbrace \theta_4\rbrace$.
\end{itemize}

Therefore the left-invariant 1-forms of the orthonormal basis $\lbrace \theta_i\rbrace$ have the following asymptotic weights:
\begin{itemize}
    \item $w(\theta_1)=w(\theta_2)=w(\theta_5)=w(\theta_6)=1$;
    \item $w(\theta_3)=2$;
    \item $w(\theta_4)=3$.
\end{itemize}

\begin{oss}
% Let us stress that in this case $G$ is a Carnot group with stratification
% \begin{align*}
%     \mathfrak{g}=V_1\oplus V_2\oplus V_3
% \end{align*}
% with $V_1=span_\mathbb{R}\lbrace X_1,X_2,X_5,X_6\rbrace$, $V_2=span_\mathbb{R}\lbrace X_3\rbrace$, and $V_3=span_\mathbb{R}\lbrace X_4\rbrace$.
% The weights given to the left-invariant 1-forms $\theta_i=X_i^\ast$ using the subspaces $W_j$ coincide with the weights that derive from the homogeneity given by the stratification $V_j$.

Let us notice that the differential $d_\mathfrak{g}$ on left-invariant 1-forms does not increase the asymptotic weight:
\begin{itemize}
    \item $d_\mathfrak{g}\theta_3=-\theta_1\wedge\theta_2$, where $w(\theta_3)=2=1+1=w(\theta_1)+w(\theta_2)=w(-\theta_1\wedge\theta_2)$, and
    \item $d_\mathfrak{g}\theta_4=-\theta_1\wedge\theta_3-\theta_5\wedge\theta_6$, where $w(\theta_4)=3=1+2=w(\theta_1)+w(\theta_3)=w(-\theta_1\wedge\theta_3)$ and $w(\theta_4)=3>1+1=w(\theta_5)+w(\theta_6)=w(-\theta_5\wedge\theta_6)$.
\end{itemize}
\end{oss}

\subsubsection*{Rumin forms}

Let us study the action of the differential $d_\mathfrak{g}$ on the space of all other left-invariant forms in order to compute the space of all Rumin forms $E_0^\bullet$.

We have already studied the action of $d_\mathfrak{g}$ on left-invariant 1-forms, hence we get that
\begin{align*}
    Ker\,d_\mathfrak{g}\cap\Omega^1=span_{C^\infty(G)}\lbrace \theta_1,\theta_2,\theta_5,\theta_6\rbrace\,,
\end{align*}
and since $Im\,d_\mathfrak{g}\cap\Omega^1=0$, we get
\begin{align*}
    E_0^1=span_{C^\infty(G)}\lbrace \theta_1,\theta_2,\theta_5,\theta_6\rbrace\,.
\end{align*}

Let us now study the action of $d_\mathfrak{g}$ on left-invariant 2-forms $\Lambda^2\mathfrak{g}^\ast$:
\begin{itemize}
    \item $d_\mathfrak{g}(\theta_1\wedge\theta_2)=d_\mathfrak{g}(\theta_1\wedge\theta_3)=d_\mathfrak{g}(\theta_1\wedge\theta_5)=d_\mathfrak{g}(\theta_1\wedge\theta_6)=d_\mathfrak{g}(\theta_2\wedge\theta_3)=0$;
    \item $d_\mathfrak{g}(\theta_1\wedge \theta_4)=\theta_1\wedge(\theta_1\wedge\theta_3+\theta_5\wedge\theta_6)=\theta_1\wedge\theta_5\wedge\theta_6$;
    \item $d_\mathfrak{g}(\theta_2\wedge\theta_4)=\theta_2\wedge(\theta_1\wedge\theta_3+\theta_5\wedge\theta_6)=-\theta_1\wedge\theta_2\wedge\theta_3+\theta_2\wedge\theta_5\wedge\theta_6$;
    \item $d_\mathfrak{g}(\theta_2\wedge\theta_5)=d_\mathfrak{g}(\theta_2\wedge\theta_6)=0$;
    \item $d_\mathfrak{g}(\theta_3\wedge\theta_4)=-\theta_1\wedge\theta_2\wedge\theta_4+\theta_3\wedge(\theta_1\wedge\theta_3+\theta_5\wedge\theta_6)=-\theta_1\wedge\theta_2\wedge\theta_4+\theta_3\wedge\theta_5\wedge\theta_6$;
    \item $d_\mathfrak{g}(\theta_3\wedge\theta_5)=-\theta_1\wedge\theta_2\wedge\theta_5$;
    \item $d_\mathfrak{g}(\theta_3\wedge\theta_6)=-\theta_1\wedge\theta_2\wedge\theta_6$;
    \item $d_\mathfrak{g}(\theta_4\wedge\theta_5 )=-\theta_1\wedge\theta_3\wedge\theta_5$;
    \item $d_\mathfrak{g}(\theta_4\wedge\theta_6)=-\theta_1\wedge\theta_3\wedge\theta_6$;
    \item $d_\mathfrak{g}(\theta_5\wedge\theta_6)=0$.
\end{itemize}

Therefore 
\begin{align*}
    Ker\,d_\mathfrak{g}\cap\Omega^2=span_{C^\infty(G)}\lbrace &\theta_1\wedge\theta_2,\theta_1\wedge\theta_3,\theta_1\wedge\theta_5,\theta_1\wedge\theta_6,\theta_2\wedge\theta_3,\theta_2\wedge\theta_5,\theta_2\wedge\theta_6,\theta_5\wedge\theta_6\rbrace
\end{align*}
and since
\begin{align*}
    Im\,d_\mathfrak{g}\cap\Omega^2=span_{C^\infty(G)}\lbrace \theta_1\wedge\theta_2,\theta_1\wedge\theta_3+\theta_5\wedge\theta_6\rbrace
\end{align*}
we get
\begin{align*}
    E_0^2=&span_{C^\infty(G)}\lbrace \theta_5\wedge\theta_6-\theta_1\wedge\theta_3, \theta_1\wedge\theta_5,\theta_1\wedge\theta_6,\theta_2\wedge\theta_3,\theta_2\wedge\theta_5,\theta_2\wedge\theta_6\rbrace\\=&span_{C^\infty(G)}\underbrace{\lbrace \theta_1\wedge\theta_5,\theta_1\wedge\theta_6,\theta_2\wedge\theta_5,\theta_2\wedge\theta_6\rbrace}_{\text{weight }2}\oplus\\&\oplus span_{C^\infty(G)}\underbrace{\lbrace \theta_5\wedge\theta_6-\theta_1\wedge\theta_3\rbrace}_{\text{weight }2+\text{weight }3}\oplus span_{C^\infty(G)}\underbrace{\lbrace \theta_2\wedge\theta_3\rbrace}_{\text{weight }3}\,.
\end{align*}

Let us now study the action of $d_\mathfrak{g}$ on left-invariant 3-forms $\Lambda^3\mathfrak{g}^\ast$:
\begin{itemize}
    \item $d_\mathfrak{g}(\theta_1\wedge\theta_2\wedge\theta_3)=d_\mathfrak{g}(\theta_1\wedge\theta_2\wedge\theta_5)=d_\mathfrak{g}(\theta_1\wedge\theta_2\wedge\theta_6)=0$;
    \item $d_\mathfrak{g}(\theta_1\wedge\theta_3\wedge\theta_5)=d_\mathfrak{g}(\theta_1\wedge\theta_3\wedge\theta_6)=d_\mathfrak{g}(\theta_1\wedge\theta_4\wedge\theta_5)=d_\mathfrak{g}(\theta_1\wedge\theta_4\wedge\theta_6)=d_\mathfrak{g}(\theta_1\wedge\theta_5\wedge\theta_6)=0$;
    \item $d_\mathfrak{g}(\theta_2\wedge\theta_3\wedge\theta_5)=d_\mathfrak{g}(\theta_2\wedge\theta_3\wedge\theta_6)=d_\mathfrak{g}(\theta_2\wedge\theta_5\wedge\theta_6)=0$;
    \item $d_\mathfrak{g}(\theta_1\wedge\theta_2\wedge\theta_4)=-\theta_1\wedge\theta_2\wedge(\theta_1\wedge\theta_3+\theta_5\wedge\theta_6)=-\theta_1\wedge\theta_2\wedge\theta_5\wedge\theta_6$; 
    \item $d_\mathfrak{g}(\theta_1\wedge\theta_3\wedge\theta_4)=-\theta_1\wedge\theta_3\wedge(\theta_1\wedge\theta_3+\theta_5\wedge\theta_6)=-\theta_1\wedge\theta_3\wedge\theta_5\wedge\theta_6 $;
    \item $d_\mathfrak{g}(\theta_2\wedge\theta_3\wedge\theta_4)=-\theta_2\wedge\theta_3\wedge(\theta_1\wedge\theta_3+\theta_5\wedge\theta_6)=-\theta_2\wedge\theta_3\wedge\theta_5\wedge\theta_6$;
    \item $d_\mathfrak{g}(\theta_2\wedge\theta_4\wedge\theta_5)=\theta_2\wedge\theta_1\wedge\theta_3\wedge\theta_5=-\theta_1\wedge\theta_2\wedge\theta_3\wedge\theta_5$;
    \item $d_\mathfrak{g}(\theta_2\wedge\theta_4\wedge\theta_6)=\theta_2\wedge\theta_1\wedge\theta_3\wedge\theta_6=-\theta_1\wedge\theta_2\wedge\theta_3\wedge\theta_6$;
    \item $d_\mathfrak{g}(\theta_3\wedge\theta_4\wedge\theta_5)=-\theta_1\wedge\theta_2\wedge\theta_4\wedge\theta_5$;
    \item $d_\mathfrak{g}(\theta_3\wedge\theta_4\wedge\theta_6)=-\theta_1\wedge\theta_2\wedge\theta_4\wedge\theta_6$;
    \item $d_\mathfrak{g}(\theta_3\wedge\theta_5\wedge\theta_6)=-\theta_1\wedge\theta_2\wedge\theta_5\wedge\theta_6$;
    \item $d_\mathfrak{g}(\theta_4\wedge\theta_5\wedge\theta_6)=-\theta_1\wedge\theta_3\wedge\theta_5\wedge\theta_6$.
\end{itemize}

Therefore
\begin{align*}
    Ker\,d_\mathfrak{g}\cap\Omega^3=span_{C^\infty(G)}\lbrace & \theta_1\wedge\theta_2\wedge\theta_3, \theta_1\wedge\theta_2\wedge\theta_5,\theta_1\wedge\theta_2\wedge\theta_6, \theta_1\wedge\theta_3\wedge\theta_5,\theta_1\wedge\theta_3\wedge\theta_6,\\&\theta_1\wedge\theta_4\wedge\theta_5,\theta_1\wedge\theta_4\wedge\theta_6,\theta_1\wedge\theta_5\wedge\theta_6, \theta_2\wedge\theta_3\wedge\theta_5,\theta_2\wedge\theta_3\wedge\theta_6,\\&
    \theta_2\wedge\theta_5\wedge\theta_6,\theta_1\wedge\theta_2\wedge\theta_4-\theta_3\wedge\theta_5\wedge\theta_6,\theta_1\wedge\theta_3\wedge\theta_4-\theta_4\wedge\theta_5\wedge\theta_6\rbrace
\end{align*}
and since
\begin{align*}
    Im\,d_\mathfrak{g}\cap\Omega^3=span_{C^\infty(G)}\lbrace & \theta_1\wedge\theta_2\wedge\theta_3-\theta_2\wedge\theta_5\wedge\theta_6, \theta_1\wedge\theta_2\wedge\theta_4-\theta_3\wedge\theta_5\wedge\theta_6,\theta_1\wedge\theta_2\wedge\theta_5,\\&\theta_1\wedge\theta_2\wedge\theta_6,\theta_1\wedge\theta_3\wedge\theta_5,\theta_1\wedge\theta_3\wedge\theta_6, \theta_1\wedge\theta_5\wedge\theta_6\rbrace
\end{align*}
we get
\begin{align*}
    E_0^3=span_{C^\infty(G)}\lbrace & \theta_1\wedge\theta_2\wedge\theta_3+\theta_2\wedge\theta_5\wedge\theta_6,\theta_1\wedge\theta_4\wedge\theta_5,\theta_1\wedge\theta_4\wedge\theta_6,\\&\theta_2\wedge\theta_3\wedge\theta_5,\theta_2\wedge\theta_3\wedge\theta_6,\theta_1\wedge\theta_3\wedge\theta_4-\theta_4\wedge\theta_5\wedge\theta_6\rbrace\\
    =span_{C^\infty(G)}\lbrace &\underbrace{\theta_2\wedge\theta_5\wedge\theta_6+\theta_1\wedge\theta_2\wedge\theta_3\rbrace}_{\text{weight }3+\text{weight }4}\oplus span_{C^\infty(G)}\underbrace{\lbrace \theta_2\wedge\theta_3\wedge\theta_5,\theta_2\wedge\theta_3\wedge\theta_6\rbrace}_{\text{weight }4}\oplus\\\oplus span_{C^\infty(G)}\lbrace &\underbrace{\theta_1\wedge\theta_4\wedge\theta_5,\theta_1\wedge\theta_4\wedge\theta_6\rbrace}_{\text{weight }5}\oplus span_{C^\infty(G)}\underbrace{\lbrace \theta_1\wedge\theta_3\wedge\theta_4-\theta_4\wedge\theta_5\wedge\theta_6\rbrace}_{\text{weight }6+\text{weight }5}\,.
\end{align*}

Let us study the action of $d_\mathfrak{g}$ on left-invariant 4-forms $\Lambda^4\mathfrak{g}^\ast$:
\begin{itemize}
\item $d_\mathfrak{g}(\theta_1\wedge\theta_2\wedge\theta_3\wedge\theta_4)=\theta_1\wedge\theta_2\wedge\theta_3\wedge\theta_5\wedge\theta_6$;
    \item $d_\mathfrak{g}(\theta_1\wedge\theta_2\wedge\theta_3\wedge\theta_5)=d_\mathfrak{g}(\theta_1\wedge\theta_2\wedge\theta_3\wedge\theta_6)=d_\mathfrak{g}(\theta_1\wedge\theta_2\wedge\theta_4\wedge\theta_5)=d_\mathfrak{g}(\theta_2\wedge\theta_3\wedge\theta_5\wedge\theta_6)=0$;
    \item $d_\mathfrak{g}(\theta_1\wedge\theta_2\wedge\theta_4\wedge\theta_6)=d_\mathfrak{g}(\theta_1\wedge\theta_2\wedge\theta_5\wedge\theta_6)=d_\mathfrak{g}(\theta_1\wedge\theta_3\wedge\theta_4\wedge\theta_5)=d_\mathfrak{g}(\theta_1\wedge\theta_3\wedge\theta_4\wedge\theta_6)=0$;
    \item $d_\mathfrak{g}(\theta_1\wedge\theta_3\wedge\theta_5\wedge\theta_6)=d_\mathfrak{g}(\theta_1\wedge\theta_4\wedge\theta_5\wedge\theta_6)=d_\mathfrak{g}(\theta_2\wedge\theta_3\wedge\theta_4\wedge\theta_5)=d_\mathfrak{g}(\theta_2\wedge\theta_3\wedge\theta_4\wedge\theta_6)=0$;
    \item $d_\mathfrak{g}(\theta_2\wedge\theta_4\wedge\theta_5\wedge\theta_6)=\theta_2\wedge\theta_1\wedge\theta_3\wedge\theta_5\wedge\theta_6=-\theta_1\wedge\theta_2\wedge\theta_3\wedge\theta_5\wedge\theta_6$;
    \item $d_\mathfrak{g}(\theta_3\wedge\theta_4\wedge\theta_5\wedge\theta_6)=-\theta_1\wedge\theta_2\wedge\theta_4\wedge\theta_5\wedge\theta_6$.
\end{itemize}

Therefore
\begin{align*}
    Ker\,d_\mathfrak{g}\cap\Omega^4=span_{C^\infty(G)}\lbrace &\theta_1\wedge\theta_2\wedge\theta_3\wedge\theta_4+\theta_2\wedge\theta_4\wedge\theta_5\wedge\theta_6,\theta_1\wedge\theta_2\wedge\theta_3\wedge\theta_5,\theta_1\wedge\theta_2\wedge\theta_3\wedge\theta_6,\\& \theta_1\wedge\theta_2\wedge\theta_4\wedge\theta_5,\theta_1\wedge\theta_2\wedge\theta_4\wedge\theta_6,\theta_1\wedge\theta_2\wedge\theta_5\wedge\theta_6, \theta_1\wedge\theta_3\wedge\theta_4\wedge\theta_5,\\&\theta_1\wedge\theta_3\wedge\theta_4\wedge\theta_6,\theta_1\wedge\theta_3\wedge\theta_5\wedge\theta_6,\theta_1\wedge\theta_4\wedge\theta_5\wedge\theta_6,\theta_2\wedge\theta_3\wedge\theta_4\wedge\theta_5,\\&\theta_2\wedge\theta_3\wedge\theta_4\wedge\theta_6,\theta_2\wedge\theta_3\wedge\theta_5\wedge\theta_6\rbrace
\end{align*}
and since
\begin{align*}
    Im\,d_\mathfrak{g}\cap\Omega^4=span_{C^\infty(G)}\lbrace &\theta_1\wedge\theta_2\wedge\theta_3\wedge\theta_5,\theta_1\wedge\theta_2\wedge\theta_3\wedge\theta_6,\theta_1\wedge\theta_2\wedge\theta_4\wedge\theta_5,\theta_1\wedge\theta_2\wedge\theta_4\wedge\theta_6,\\&\theta_1\wedge\theta_2\wedge\theta_5\wedge\theta_6,\theta_1\wedge\theta_3\wedge\theta_5\wedge\theta_6,\theta_2\wedge\theta_3\wedge\theta_5\wedge\theta_6\rbrace
\end{align*}
we get
\begin{align*}
    E_0^4=&span_{C^\infty(G)}\lbrace  \theta_1\wedge\theta_2\wedge\theta_3\wedge\theta_4+\theta_2\wedge\theta_4\wedge\theta_5\wedge\theta_6,\theta_1\wedge\theta_3\wedge\theta_4\wedge\theta_5,\theta_1\wedge\theta_3\wedge\theta_4\wedge\theta_6,\\&\phantom{span_{C^\infty(G)}\lbrace }\theta_1\wedge\theta_4\wedge\theta_5\wedge\theta_6,\theta_2\wedge\theta_3\wedge\theta_4\wedge\theta_5,\theta_2\wedge\theta_3\wedge\theta_4\wedge\theta_6\rbrace\\=& span_{C^\infty(G)}\underbrace{\lbrace \theta_1\wedge\theta_4\wedge\theta_5\wedge\theta_6\rbrace}_{\text{weight }6}\oplus span_{C^\infty(G)}\underbrace{\lbrace\theta_1\wedge\theta_2\wedge\theta_3\wedge\theta_4+\theta_2\wedge\theta_4\wedge\theta_5\wedge\theta_6\rbrace}_{\text{weight }7+\text{weight }6}\oplus\\ &\oplus span_{C^\infty(G)}\underbrace{\lbrace  \theta_1\wedge\theta_3\wedge\theta_4\wedge\theta_5, \theta_1\wedge\theta_3\wedge\theta_4\wedge\theta_6,\theta_2\wedge\theta_3\wedge\theta_4\wedge\theta_5,\theta_2\wedge\theta_3\wedge\theta_4\wedge\theta_6\rbrace}_{\text{weight } 7}\,.
\end{align*}

Let us study the action of $d_\mathfrak{g}$ on left-invariant 5-forms $\Lambda^5\mathfrak{g}^\ast$:
\begin{itemize}
    \item $d_{\mathfrak{g}}(\theta_1\wedge\theta_2\wedge\theta_3\wedge\theta_4\wedge\theta_5)=d_\mathfrak{g}(\theta_1\wedge\theta_2\wedge\theta_3\wedge\theta_4\wedge\theta_6)=d_\mathfrak{g}(\theta_1\wedge\theta_2\wedge\theta_3\wedge\theta_5\wedge\theta_6)=0$;
     \item $d_{\mathfrak{g}}(\theta_1\wedge\theta_2\wedge\theta_4\wedge\theta_5\wedge\theta_6)=d_\mathfrak{g}(\theta_1\wedge\theta_3\wedge\theta_4\wedge\theta_5\wedge\theta_6)=d_\mathfrak{g}(\theta_2\wedge\theta_3\wedge\theta_4\wedge\theta_5\wedge\theta_6)=0$.
\end{itemize}

Therefore
\begin{align*}
    Ker\,d_\mathfrak{g}\cap\Omega^5=\Omega^5
\end{align*}
and since
\begin{align*}
    Im\,d_\mathfrak{g}\cap\Omega^5=span_{C^\infty(G)}\lbrace \theta_1\wedge\theta_2\wedge\theta_3\wedge\theta_5\wedge\theta_6,\theta_1\wedge\theta_2\wedge\theta_4\wedge\theta_5\wedge\theta_6\rbrace
\end{align*}
we get
\begin{align*}
    E_0^5=span_{C^\infty(G)}\lbrace & \theta_1\wedge\theta_2\wedge\theta_3\wedge\theta_4\wedge\theta_5,\theta_1\wedge\theta_2\wedge\theta_3\wedge\theta_4\wedge\theta_6,\\ &\theta_1\wedge\theta_3\wedge\theta_4\wedge\theta_5\wedge\theta_6,\theta_2\wedge\theta_3\wedge\theta_4\wedge\theta_5\wedge\theta_6\rbrace\,.
\end{align*}

Finally, for 6-forms we have:
\begin{align*}
    E_0^6=\Omega^6=span_{C^\infty(G)}\lbrace \theta_1\wedge\theta_2\wedge\theta_3\wedge\theta_4\wedge\theta_5\wedge\theta_6\rbrace\,.
\end{align*}

\subsubsection*{The Rumin differential on 0-forms}

Let us study the differential $d_c$ when applied to the space of Rumin 0-forms $E_0^0=\Omega^0=C^\infty(G)$. Let us then take $f\in C^\infty(G)$, we then have
\begin{align*}
    d_cf=&\Pi_{E_0}d\Pi_Ef=\Pi_{E_0}d(f-Qdf-dQf)=\Pi_{E_0}d(f-P\underbrace{d_\mathfrak{g}^{-1}df}_{=0}-dP\underbrace{d_\mathfrak{g}^{-1}f}_{=0})=\Pi_{E_0}df\\=&\Pi_{E_0}(X_1f\theta_1+X_2f\theta_2+X_3f\theta_3+X_4f\theta_4+X_5f\theta_5+X_6f\theta_6)\\=&(Id-d_\mathfrak{g}^{-1}d_\mathfrak{g}-d_\mathfrak{g}d_\mathfrak{g}^{-1})(X_1f\theta_1+X_2f\theta_2+X_3f\theta_3+X_4f\theta_4+X_5f\theta_5+X_6f\theta_6)\\=&X_1f\theta_1+X_2f\theta_2+X_3f\theta_3+X_4f\theta_4+X_5f\theta_5+X_6f\theta_6-d_\mathfrak{g}^{-1}d_\mathfrak{g}(X_3f\theta_3)-d_\mathfrak{g}^{-1}d_\mathfrak{g}(X_4f\theta_4)\\=&X_1f\theta_1+X_2f\theta_2+X_5f\theta_5+X_6f\theta_6
\end{align*}

\subsubsection*{The Rumin differential on 1-forms} Let us study the differential 
\begin{align*}
    d_c\colon E_0^{1}\to E_0^2\,.
\end{align*}
In order to do so, we will break the computations into steps.

Given an arbitrary Rumin 1-form $\alpha=f_1\theta_1+f_2\theta_2+f_3\theta_5+f_4\theta_6$, we have
\begin{itemize}
    \item let us compute $d_\mathfrak{g}^{-1}d\alpha$:
\end{itemize}
\begin{align*}
    d_\mathfrak{g}^{-1}d\alpha=&d_\mathfrak{g}^{-1}d(f_1\theta_1+f_2\theta_2+f_3\theta_5+f_4\theta_6)\\=&d_\mathfrak{g}^{-1}\big[-X_2f_1\theta_1\wedge\theta_2-X_3f_1\theta_1\wedge\theta_3+X_1f_2\theta_1\wedge\theta_2-X_6f_3\theta_5\wedge\theta_6+X_5f_4\theta_5\wedge\theta_6+(Im\,d_\mathfrak{g})^\perp \big]\\=&d_\mathfrak{g}^{-1}\big[(X_1f_2-X_2f_1)\theta_1\wedge\theta_2-X_3f_1\theta_1\wedge\theta_3+(X_5f_4-X_6f_3)\theta_5\wedge\theta_6+(Im\,d_\mathfrak{g})^\perp\big]\\=&-(X_1f_2-X_2f_1)\theta_3+\frac{X_3f_1-(X_5f_4-X_6f_3)}{2}\theta_4\,;
\end{align*}
\begin{itemize}
    \item let us compute $d_\mathfrak{g}^{-1}(d-d_\mathfrak{g})d_\mathfrak{g}^{-1}d\alpha$:
\end{itemize}
\begin{align*}
    d_\mathfrak{g}^{-1}(d-d_\mathfrak{g})d_\mathfrak{g}^{-1}d\alpha=&d_\mathfrak{g}^{-1}(d-d_\mathfrak{g})\big[-(X_1f_2-X_2f_1)\theta_3+\frac{X_3f_1-X_5f_4+X_6f_3}{2}\theta_4\big]\\=&d_\mathfrak{g}^{-1}\big[-X_1(X_1f_2-X_2f_1)\theta_1\wedge\theta_3+(Im\,d_\mathfrak{g})^\perp\big]=\frac{X_1(X_1f_2-X_2f_1)}{2}\theta_4\,;
\end{align*}
\begin{itemize}
    \item therefore, we have the following expression for $\Pi_E\alpha$:
\end{itemize}
\begin{align*}
    \Pi_E\alpha=&\alpha-d_\mathfrak{g}^{-1}d\alpha+d_\mathfrak{g}^{-1}(d-d_\mathfrak{g})d_\mathfrak{g}^{-1}d\alpha\\=&f_1\theta_1+f_2\theta_2+f_3\theta_5+f_4\theta_6+(X_1f_2-X_2f_1)\theta_3-\frac{X_3f_1-X_5f_4+X_6f_3}{2}\theta_4+\\&+\frac{X_1(X_1f_2-X_2f_1)}{2}\theta_4\\=&f_1\theta_1+f_2\theta_2+(X_1f_2-X_2f_1)\theta_3+\frac{X_1(X_1f_2-X_2f_1)-X_3f_1+X_5f_4-X_6f_3}{2}\theta_4+\\&+f_3\theta_5+f_4\theta_6\,;
\end{align*}
\begin{itemize}
    \item let us now compute $d\Pi_E\alpha$:
\end{itemize}
\begin{align*}
    d\Pi_E\alpha=&d\big[f_1\theta_1+f_2\theta_2+(X_1f_2-X_2f_1)\theta_3+\frac{X_1(X_1f_2-X_2f_1)-X_3f_1+X_5f_4-X_6f_3}{2}\theta_4+\\&+f_3\theta_5+f_4\theta_6\big]\\=&-X_2f_1\theta_1\wedge\theta_2-X_3f_1\theta_1\wedge\theta_3-X_4f_1\theta_1\wedge\theta_4-X_5f_1\theta_1\wedge\theta_5-X_6f_1\theta_1\wedge\theta_6+\\
    &+X_1f_2\theta_1\wedge\theta_2-X_3f_2\theta_2\wedge\theta_3-X_4f_2\theta_2\wedge\theta_4-X_5f_2\theta_2\wedge\theta_5-X_6f_2\theta_2\wedge\theta_6+\\
    &+X_1(X_1f_2-X_2f_1)\theta_1\wedge\theta_3+X_2(X_1f_2-X_2f_1)\theta_2\wedge\theta_3-X_4(X_1f_2-X_2f_1)\theta_3\wedge\theta_4+\\&-X_5(X_1f_2-X_2f_1)\theta_3\wedge\theta_5-X_6(X_1f_2-X_2f_1)\theta_3\wedge\theta_6+\\&+\frac{X_1[X_1(X_1f_2-X_2f_1)-X_3f_1+X_5f_4-X_6f_3]}{2}\theta_1\wedge\theta_4+\\&+\frac{X_2[X_1(X_1f_2-X_2f_1)-X_3f_1+X_5f_4-X_6f_3]}{2}\theta_2\wedge\theta_4+\\&+\frac{X_3[X_1(X_1f_2-X_2f_1)-X_3f_1+X_5f_4-X_6f_3]}{2}\theta_3\wedge\theta_4+\\&-\frac{X_5[X_1(X_1f_2-X_2f_1)-X_3f_1+X_5f_4-X_6f_3]}{2}\theta_4\wedge\theta_5+\\&-\frac{X_6[X_1(X_1f_2-X_2f_1)-X_3f_1+X_5f_4-X_6f_3]}{2}\theta_4\wedge\theta_6+X_1f_3\theta_1\wedge\theta_5+\\&+X_2f_3\theta_2\wedge\theta_5+X_3f_3\theta_3\wedge\theta_5+X_4f_3\theta_4\wedge\theta_5-X_6f_3\theta_5\wedge\theta_6+X_1f_4\theta_1\wedge\theta_6+\\&+X_2f_4\theta_2\wedge\theta_6+X_3f_4\theta_3\wedge\theta_6+X_4f_4\theta_4\wedge\theta_6+X_5f_4\theta_5\wedge\theta_6+(X_1f_2-X_2f_1)d_\mathfrak{g}\theta_3+\\&+\frac{X_1(X_1f_2-X_2f_1)-X_3f_1+X_5f_4-X_6f_3}{2}d_\mathfrak{g}\theta_4\\=&(X_1f_2-X_2f_1)\theta_1\wedge\theta_2+[X_1(X_1f_2-X_2f_1)-X_3f_1]\theta_1\wedge\theta_3+\\&+\big[\frac{X_1^2(X_1f_2-X_2f_1)-X_1X_3f_1+X_1X_5f_4-X_1X_6f_3}{2}-X_4f_1\big]\theta_1\wedge\theta_4+\\&+(X_1f_3-X_5f_1)\theta_1\wedge\theta_5+(X_1f_4-X_6f_1)\theta_1\wedge\theta_6+[X_2(X_1f_2-X_2f_1)-X_3f_2]\theta_2\wedge\theta_3+\\&+\big[\frac{X_2X_1(X_1f_2-X_2f_1)-X_2X_3f_1+X_2X_5f_4-X_2X_6f_3}{2}-X_4f_2\big]\theta_2\wedge\theta_4+\\&+(X_2f_3-X_5f_2)\theta_2\wedge\theta_5+(X_2f_4-X_6f_2)\theta_2\wedge\theta_6+\\&+\big[\frac{X_3X_1(X_1f_2-X_2f_1)-X_3^2f_1+X_3X_5f_4-X_3X_6f_3}{2}-X_4(X_1f_2-X_2f_1)\big]\theta_3\wedge\theta_4+\\&+[X_3f_3-X_5(X_1f_2-X_2f_1)]\theta_3\wedge\theta_5+[X_3f_4-X_6(X_1f_2-X_2f_1)]\theta_3\wedge\theta_6+\\&+\big[X_4f_3-\frac{X_5X_1(X_1f_2-X_2f_1)-X_5X_3f_1+X_5^2f_4-X_5X_6f_3}{2}\big]\theta_4\wedge\theta_5+\\&+\big[X_4f_4-\frac{X_6X_1(X_1f_2-X_2f_1)-X_6X_3f_1+X_6X_5f_4-X_6^2f_3}{2}\big]\theta_4\wedge\theta_6+\end{align*}
    \begin{align*}
    \phantom{d\Pi_E\alpha=}&+(X_5f_4-X_6f_3)\theta_5\wedge\theta_6+(X_1f_2-X_2f_1)d_\mathfrak{g}\theta_3+\\&+\frac{X_1(X_1f_2-X_2f_1)-X_3f_1+X_5f_4-X_6f_3}{2}d_\mathfrak{g}\theta_4\,;
\end{align*}
\begin{itemize}
    \item before finishing the computations, we will study the action of the projection $\Pi_{E_0}=Id-d_\mathfrak{g}^{-1}d_\mathfrak{g}-d_\mathfrak{g}d_\mathfrak{g}^{-1}$ on each left-invariant 2-form:
    \begin{itemize}
        \item $\Pi_{E_0}(\theta_1\wedge\theta_2)=\theta_1\wedge\theta_2-d_\mathfrak{g}^{-1}d_\mathfrak{g}(\theta_1\wedge\theta_2)-d_\mathfrak{g}d_\mathfrak{g}^{-1}(\theta_1\wedge\theta_2)=0$;
        \item $\Pi_{E_0}(\theta_1\wedge\theta_3)=\theta_1\wedge\theta_3-d_\mathfrak{g}^{-1}d_\mathfrak{g}(\theta_1\wedge\theta_3)-d_\mathfrak{g}d_\mathfrak{g}^{-1}(\theta_1\wedge\theta_3)=-\frac{\theta_5\wedge\theta_6-\theta_1\wedge\theta_3}{2}$;
        \item $\Pi_{E_0}(\theta_1\wedge\theta_4)=\theta_1\wedge\theta_4-d_\mathfrak{g}^{-1}d_\mathfrak{g}(\theta_1\wedge\theta_4)-d_\mathfrak{g}d_\mathfrak{g}^{-1}(\theta_1\wedge\theta_4)=0$;
         \item $\Pi_{E_0}(\theta_1\wedge\theta_5)=\theta_1\wedge\theta_5-d_\mathfrak{g}^{-1}d_\mathfrak{g}(\theta_1\wedge\theta_5)-d_\mathfrak{g}d_\mathfrak{g}^{-1}(\theta_1\wedge\theta_5)=\theta_1\wedge\theta_5$;
          \item $\Pi_{E_0}(\theta_1\wedge\theta_6)=\theta_1\wedge\theta_6-d_\mathfrak{g}^{-1}d_\mathfrak{g}(\theta_1\wedge\theta_6)-d_\mathfrak{g}d_\mathfrak{g}^{-1}(\theta_1\wedge\theta_6)=\theta_1\wedge\theta_6$;
           \item $\Pi_{E_0}(\theta_2\wedge\theta_3)=\theta_2\wedge\theta_3-d_\mathfrak{g}^{-1}d_\mathfrak{g}(\theta_2\wedge\theta_3)-d_\mathfrak{g}d_\mathfrak{g}^{-1}(\theta_2\wedge\theta_3)=\theta_2\wedge\theta_3$;
           \item $\Pi_{E_0}(\theta_2\wedge\theta_4)=\theta_2\wedge\theta_4-d_\mathfrak{g}^{-1}d_\mathfrak{g}(\theta_2\wedge\theta_4)-d_\mathfrak{g}d_\mathfrak{g}^{-1}(\theta_2\wedge\theta_4)=0$;
           \item $\Pi_{E_0}(\theta_2\wedge\theta_5)=\theta_2\wedge\theta_5-d_\mathfrak{g}^{-1}d_\mathfrak{g}(\theta_2\wedge\theta_5)-d_\mathfrak{g}d_\mathfrak{g}^{-1}(\theta_2\wedge\theta_5)=\theta_2\wedge\theta_5$;
           \item $\Pi_{E_0}(\theta_2\wedge\theta_6)=\theta_2\wedge\theta_6-d_\mathfrak{g}^{-1}d_\mathfrak{g}(\theta_2\wedge\theta_6)-d_\mathfrak{g}d_\mathfrak{g}^{-1}(\theta_2\wedge\theta_6)=\theta_2\wedge\theta_6$;
           \item $\Pi_{E_0}(\theta_3\wedge\theta_4)=\theta_3\wedge\theta_4-d_\mathfrak{g}^{-1}d_\mathfrak{g}(\theta_3\wedge\theta_4)-d_\mathfrak{g}d_\mathfrak{g}^{-1}(\theta_3\wedge\theta_4)=0$;
           \item $\Pi_{E_0}(\theta_3\wedge\theta_5)=\theta_3\wedge\theta_5-d_\mathfrak{g}^{-1}d_\mathfrak{g}(\theta_3\wedge\theta_5)-d_\mathfrak{g}d_\mathfrak{g}^{-1}(\theta_3\wedge\theta_5)=0$;
           \item $\Pi_{E_0}(\theta_3\wedge\theta_6)=\theta_3\wedge\theta_6-d_\mathfrak{g}^{-1}d_\mathfrak{g}(\theta_3\wedge\theta_6)-d_\mathfrak{g}d_\mathfrak{g}^{-1}(\theta_3\wedge\theta_6)=0$;
           \item $\Pi_{E_0}(\theta_4\wedge\theta_5)=\theta_4\wedge\theta_5-d_\mathfrak{g}^{-1}d_\mathfrak{g}(\theta_4\wedge\theta_5)-d_\mathfrak{g}d_\mathfrak{g}^{-1}(\theta_4\wedge\theta_5)=0$;
           \item $\Pi_{E_0}(\theta_4\wedge\theta_6)=\theta_4\wedge\theta_6-d_\mathfrak{g}^{-1}d_\mathfrak{g}(\theta_4\wedge\theta_6)-d_\mathfrak{g}d_\mathfrak{g}^{-1}(\theta_4\wedge\theta_6)=0$;
            \item $\Pi_{E_0}(\theta_5\wedge\theta_6)=\theta_5\wedge\theta_6-d_\mathfrak{g}^{-1}d_\mathfrak{g}(\theta_5\wedge\theta_6)-d_\mathfrak{g}d_\mathfrak{g}^{-1}(\theta_5\wedge\theta_6)=\frac{\theta_5\wedge\theta_6-\theta_1\wedge\theta_3}{2}$.
    \end{itemize}
    \item Finally, using all the previous computations, we can express explicitly the differential operator $d_c\alpha=\Pi_{E_0}d\Pi_E$, so that
    \begin{align*}
        d_c\alpha=&d_c(f_1\theta_1+f_2\theta_2+f_3\theta_5+f_4\theta_6)\\
        =&-[X_1(X_1f_2-X_2f_1)-X_3f_1]\frac{\theta_5\wedge\theta_6-\theta_1\wedge\theta_3}{2}+\\&+(X_1f_3-X_5f_1)\theta_1\wedge\theta_5+(X_1f_4-X_6f_1)\theta_1\wedge\theta_6+[X_2(X_1f_2-X_2f_1)-X_3f_2]\theta_2\wedge\theta_3+\\&+(X_2f_3-X_5f_2)\theta_2\wedge\theta_5+(X_2f_4-X_6f_2)\theta_2\wedge\theta_6+\\&+(X_5f_4-X_6f_3)\frac{\theta_5\wedge\theta_6-\theta_1\wedge\theta_3}{2}\\
        =&(X_1f_3-X_5f_1)\theta_1\wedge\theta_5+(X_1f_4-X_6f_1)\theta_1\wedge\theta_6+[X_2(X_1f_2-X_2f_1)-X_3f_2]\theta_2\wedge\theta_3+\\&+(X_2f_3-X_5f_2)\theta_2\wedge\theta_5+(X_2f_4-X_6f_2)\theta_2\wedge\theta_6+\\&+\frac{X_5f_4-X_6f_3-X_1(X_1f_2-X_2f_1)+X_3f_1}{2}(\theta_5\wedge\theta_6-\theta_1\wedge\theta_3)\,;
    \end{align*}
\end{itemize}

% We can express the differential $d_c\colon E_0^1\to E_0^2$ in matrix form. If we consider the ordered bases $\lbrace \theta_1,\theta_2,\theta_5,\theta_6\rbrace$ for $E_0^1$ and $\lbrace \theta_1\wedge\theta_5, \theta_1\wedge\theta_6,\theta_2\wedge\theta_5, \theta_2\wedge\theta_6,\frac{\theta_5\wedge\theta_6-\theta_1\wedge\theta_3}{2}, \theta_2\wedge\theta_3\rbrace$ for $E_0^2$, we have
% \begin{align*}
%     \begin{bmatrix}
%     -X_5 & 0 & X_1 & 0 \\
%     -X_6 & 0 & 0 & X_1 \\
%     0 & -X_5 & X_2 & 0 \\
%     0 & -X_6 & 0 & X_2\\
%     X_3-X_1X_2 & -X_1^2 & -X_6 & X_5 \\
%     -X_2^2 & X_2X_1 -X_3 & 0 & 0& 
%     \end{bmatrix}
% \end{align*}

\subsubsection*{The Rumin differential on 2-forms} Let us study the differential 
\begin{align*}
    d_c\colon E_0^{2}\to E_0^3\,.
\end{align*}
In order to do so, we will break the computations into steps.

Given an arbitrary Rumin 2-form $\beta=g_1\theta_1\wedge\theta_5+g_2\theta_1\wedge\theta_6+g_3\theta_2\wedge\theta_5+g_4\theta_2\wedge\theta_6+g_5(\theta_5\wedge\theta_6-\theta_1\wedge\theta_3)+g_6\theta_2\wedge\theta_3$, we have
\begin{itemize}
    \item let us compute $d_\mathfrak{g}^{-1}d\beta$:
\end{itemize}
\begin{align*}
    d_\mathfrak{g}^{-1}d\beta=&d_\mathfrak{g}^{-1}d\big[g_1\theta_1\wedge\theta_5+g_2\theta_1\wedge\theta_6+g_3\theta_2\wedge\theta_5+g_4\theta_2\wedge\theta_6+g_5(\theta_5\wedge\theta_6-\theta_1\wedge\theta_3)+g_6\theta_2\wedge\theta_3\big]\\=&d_\mathfrak{g}^{-1}\big[-X_2g_1\theta_1\wedge\theta_2\wedge\theta_5-X_3g_1\theta_1\wedge\theta_3\wedge\theta_5+X_6g_1\theta_1\wedge\theta_5\wedge\theta_6-X_2g_2\theta_1\wedge\theta_2\wedge\theta_6+\\&-X_3g_2\theta_1\wedge\theta_3\wedge\theta_6-X_5g_2\theta_1\wedge\theta_5\wedge\theta_6+X_1g_3\theta_1\wedge\theta_2\wedge\theta_5+X_6g_3\theta_2\wedge\theta_5\wedge\theta_6+\\&+X_1g_4\theta_1\wedge\theta_2\wedge\theta_6-X_5g_4\theta_2\wedge\theta_5\wedge\theta_6+X_1g_5\theta_1\wedge\theta_5\wedge\theta_6+X_2g_5\theta_2\wedge\theta_5\wedge\theta_6+\\&+X_3g_5\theta_3\wedge\theta_5\wedge\theta_6+X_2g_5\theta_1\wedge\theta_2\wedge\theta_3-X_5g_5\theta_1\wedge\theta_3\wedge\theta_5-X_6g_5\theta_1\wedge\theta_3\wedge\theta_6+\\&+X_1g_6\theta_1\wedge\theta_2\wedge\theta_3+(Im\,d_\mathfrak{g})^\perp\big]\\
    =&d_\mathfrak{g}^{-1}\big[(X_1g_6+X_2g_5)\theta_1\wedge\theta_2\wedge\theta_3+(X_1g_3-X_2g_1)\theta_1\wedge\theta_2\wedge\theta_5+\\&+(X_1g_4-X_2g_2)\theta_1\wedge\theta_2\wedge\theta_6-(X_5g_5+X_3g_1)\theta_1\wedge\theta_3\wedge\theta_5-(X_3g_2+X_6g_5)\theta_1\wedge\theta_3\wedge\theta_6+\\&+(X_6g_1-X_5g_2+X_1g_5)\theta_1\wedge\theta_5\wedge\theta_6+(X_6g_3+X_2g_5-X_5g_4)\theta_2\wedge\theta_5\wedge\theta_6+\\&+X_3g_5\theta_3\wedge\theta_5\wedge\theta_6+(Im\,d_\mathfrak{g})^\perp\big]
     \end{align*}
    \begin{align*}
    \phantom{d_\mathfrak{g}^{-1}d\beta}
    =&-\frac{X_1g_6+X_2g_5}{2}\theta_2\wedge\theta_4-(X_1g_3-X_2g_1)\theta_3\wedge\theta_5-(X_1g_4-X_2g_2)\theta_3\wedge\theta_6+\\&+(X_5g_5+X_3g_1)\theta_4\wedge\theta_5+(X_3g_2+X_6g_5)\theta_4\wedge\theta_6+(X_6g_1-X_5g_2+X_1g_5)\theta_1\wedge\theta_4+\\& \frac{X_6g_3+X_2g_5-X_5g_4}{2}\theta_2\wedge\theta_4+\frac{X_3g_5}{2}\theta_3\wedge\theta_4\\
    =&-(X_1g_3-X_2g_1)\theta_3\wedge\theta_5-(X_1g_4-X_2g_2)\theta_3\wedge\theta_6+\\&+(X_5g_5+X_3g_1)\theta_4\wedge\theta_5+(X_3g_2+X_6g_5)\theta_4\wedge\theta_6+(X_6g_1-X_5g_2+X_1g_5)\theta_1\wedge\theta_4+\\& +\frac{X_6g_3+X_2g_5-X_5g_4-X_1g_6-X_2g_5}{2}\theta_2\wedge\theta_4+\frac{X_3g_5}{2}\theta_3\wedge\theta_4
    \,;
\end{align*}
\begin{itemize}
    \item let us compute $d_\mathfrak{g}^{-1}(d-d_\mathfrak{g})d_\mathfrak{g}^{-1}d\beta$:
\end{itemize}
\begin{align*}
    d_\mathfrak{g}^{-1}(d-d_\mathfrak{g})d_\mathfrak{g}^{-1}d\beta=&d_\mathfrak{g}^{-1}(d-d_\mathfrak{g})\big[-(X_1g_3-X_2g_1)\theta_3\wedge\theta_5-(X_1g_4-X_2g_2)\theta_3\wedge\theta_6+\\&+(X_5g_5+X_3g_1)\theta_4\wedge\theta_5+(X_3g_2+X_6g_5)\theta_4\wedge\theta_6+\\&+(X_6g_1-X_5g_2+X_1g_5)\theta_1\wedge\theta_4+\\&+ \frac{X_6g_3+X_2g_5-X_5g_4-X_1g_6-X_2g_5}{2}\theta_2\wedge\theta_4+\frac{X_3g_5}{2}\theta_3\wedge\theta_4\big]\\=&d_\mathfrak{g}^{-1}\big[
    -X_1(X_1g_3-X_2g_1)\theta_1\wedge\theta_3\wedge\theta_5-X_6(X_1g_3-X_2g_1)\theta_3\wedge\theta_5\wedge\theta_6+\\&-X_1(X_1g_4-X_2g_2)\theta_1\wedge\theta_3\wedge\theta_6+X_5(X_1g_4-X_2g_2)\theta_3\wedge\theta_5\wedge\theta_6+\\&-X_2(X_6g_1-X_5g_2+X_1g_5)\theta_1\wedge\theta_2\wedge\theta_4+\\&+\frac{X_1(X_6g_3+X_2g_5-X_5g_4-X_1g_6-X_2g_5)}{2}\theta_1\wedge\theta_2\wedge\theta_4+(Im\,d_\mathfrak{g})^\perp\big]\\=&X_1(X_1g_3-X_2g_1)\theta_4\wedge\theta_5-\frac{X_6(X_1g_3-X_2g_1)}{2}\theta_3\wedge\theta_4+\\&+X_1(X_1g_4-X_2g_2)\theta_4\wedge\theta_6+\frac{X_5(X_1g_4-X_2g_2)}{2}\theta_3\wedge\theta_4+\\&+\frac{X_2(X_6g_1-X_5g_2+X_1g_5)}{2}\theta_3\wedge\theta_4+\\&-\frac{X_1(X_6g_3+X_2g_5-X_5g_4-X_1g_6-X_2g_5)}{4}\theta_3\wedge\theta_4\\=&X_1(X_1g_3-X_2g_1)\theta_4\wedge\theta_5+X_1(X_1g_4-X_2g_2)\theta_4\wedge\theta_6+A\theta_3\wedge\theta_4\,;
\end{align*}
where
\begin{align*}
    A=&-\frac{X_6(X_1g_3-X_2g_1)}{2}+\frac{X_5(X_1g_4-X_2g_2)}{2}-\frac{X_1(X_6g_3+X_2g_5-X_5g_4-X_1g_6-X_2g_5)}{4}+\\&+\frac{X_2(X_6g_1-X_5g_2+X_1g_5)}{2}
\end{align*}
\begin{itemize}
    \item therefore, we have the following expression for $\Pi_E\beta$:
\end{itemize}
\begin{align*}
    \Pi_E\beta=&\beta-d_\mathfrak{g}^{-1}d\beta+d_\mathfrak{g}^{-1}(d-d_\mathfrak{g})d_\mathfrak{g}^{-1}d\beta\\=&g_1\theta_1\wedge\theta_5+g_2\theta_1\wedge\theta_6+g_3\theta_2\wedge\theta_5+g_4\theta_2\wedge\theta_6+g_5(\theta_5\wedge\theta_6-\theta_1\wedge\theta_3)+g_6\theta_2\wedge\theta_3+\\&+(X_1g_3-X_2g_1)\theta_3\wedge\theta_5+(X_1g_4-X_2g_2)\theta_3\wedge\theta_6+\\&-(X_5g_5+X_3g_1)\theta_4\wedge\theta_5-(X_3g_2+X_6g_5)\theta_4\wedge\theta_6+(X_6g_1-X_5g_2+X_1g_5)\theta_1\wedge\theta_4+\\& -\frac{X_6g_3+X_2g_5-X_5g_4-X_1g_6-X_2g_5}{2}\theta_2\wedge\theta_4-\frac{X_3g_5}{2}\theta_3\wedge\theta_4+\\&+X_1(X_1g_3-X_2g_1)\theta_4\wedge\theta_5+X_1(X_1g_4-X_2g_2)\theta_4\wedge\theta_6+A\theta_3\wedge\theta_4\\=&g_1\theta_1\wedge\theta_5+g_2\theta_1\wedge\theta_6+g_3\theta_2\wedge\theta_5+g_4\theta_2\wedge\theta_6+g_5(\theta_5\wedge\theta_6-\theta_1\wedge\theta_3)+g_6\theta_2\wedge\theta_3+\\&+(X_1g_3-X_2g_1)\theta_3\wedge\theta_5+(X_1g_4-X_2g_2)\theta_3\wedge\theta_6+(A-\frac{X_3g_5}{2})\theta_3\wedge\theta_4+\\&+[X_1(X_1g_3-X_2g_1)-X_5g_5-X_3g_1]\theta_4\wedge\theta_5+\\&+[X_1(X_1g_4-X_2g_2)-X_3g_2-X_6g_5]\theta_4\wedge\theta_6+\\&+(X_6g_1-X_5g_2+X_1g_5)\theta_1\wedge\theta_4 -\frac{X_6g_3+X_2g_5-X_5g_4-X_1g_6-X_2g_5}{2}\theta_2\wedge\theta_4;
\end{align*}
\begin{itemize}
    \item let us now compute $d\Pi_E\beta$:
\end{itemize}
\begin{align*}
    d\Pi_E\beta=&d\big[g_1\theta_1\wedge\theta_5+g_2\theta_1\wedge\theta_6+g_3\theta_2\wedge\theta_5+g_4\theta_2\wedge\theta_6+g_5(\theta_5\wedge\theta_6-\theta_1\wedge\theta_3)+g_6\theta_2\wedge\theta_3+\\&+(X_1g_3-X_2g_1)\theta_3\wedge\theta_5+(X_1g_4-X_2g_2)\theta_3\wedge\theta_6+(A-\frac{X_3g_5}{2})\theta_3\wedge\theta_4+\\&+[X_1(X_1g_3-X_2g_1)-X_5g_5-X_3g_1]\theta_4\wedge\theta_5+\\&+[X_1(X_1g_4-X_2g_2)-X_3g_2-X_6g_5]\theta_4\wedge\theta_6+\\&+(X_6g_1-X_5g_2+X_1g_5)\theta_1\wedge\theta_4 -\frac{X_6g_3+X_2g_5-X_5g_4-X_1g_6-X_2g_5}{2}\theta_2\wedge\theta_4\big]\\
    &-X_2g_1\theta_1\wedge\theta_2\wedge\theta_5-X_3g_1\theta_1\wedge\theta_3\wedge\theta_5-X_4g_1\theta_1\wedge\theta_4\wedge\theta_5+X_6g_1\theta_1\wedge\theta_5\wedge\theta_6+\\&-X_2g_2\theta_1\wedge\theta_2\wedge\theta_6-X_3g_2\theta_1\wedge\theta_3\wedge\theta_6-X_4g_2\theta_1\wedge\theta_4\wedge\theta_6-X_5g_2\theta_1\wedge\theta_5\wedge\theta_6+\\&+X_1g_3\theta_1\wedge\theta_2\wedge\theta_5-X_3g_3\theta_2\wedge\theta_3\wedge\theta_5-X_4g_3\theta_2\wedge\theta_4\wedge\theta_5+X_6g_3\theta_2\wedge\theta_5\wedge\theta_6+\\&+X_1g_4\theta_1\wedge\theta_2\wedge\theta_6-X_3g_4\theta_2\wedge\theta_3\wedge\theta_6-X_4g_4\theta_2\wedge\theta_4\wedge\theta_6-X_5g_4\theta_2\wedge\theta_5\wedge\theta_6+\\&+X_1g_5\theta_1\wedge\theta_5\wedge\theta_6+X_2g_5\theta_2\wedge\theta_5\wedge\theta_6+X_3g_5\theta_3\wedge\theta_5\wedge\theta_6+X_4g_5\theta_4\wedge\theta_5\wedge\theta_6+\\&+X_2g_5\theta_1\wedge\theta_2\wedge\theta_3-X_4g_5\theta_1\wedge\theta_3\wedge\theta_4-X_5g_5\theta_1\wedge\theta_3\wedge\theta_5-X_6g_5\theta_1\wedge\theta_3\wedge\theta_6+\\&+X_1g_6\theta_1\wedge\theta_2\wedge\theta_3+X_4g_6\theta_2\wedge\theta_3\wedge\theta_4+X_5g_6\theta_2\wedge\theta_3\wedge\theta_5+X_6g_6\theta_2\wedge\theta_3\wedge\theta_6+\\&+X_1(X_1g_3-X_2g_1)\theta_1\wedge\theta_3\wedge\theta_5+X_2(X_1g_3-X_2g_1)\theta_2\wedge\theta_3\wedge\theta_5+\\&-X_4(X_1g_3-X_2g_1)\theta_3\wedge\theta_4\wedge\theta_5+X_6(X_1g_3-X_2g_1)\theta_3\wedge\theta_5\wedge\theta_6+\\&+X_1(X_1g_4-X_2g_2)\theta_1\wedge\theta_3\wedge\theta_6+X_2(X_1g_4-X_2g_2)\theta_2\wedge\theta_3\wedge\theta_6+\\&-X_4(X_1g_4-X_2g_2)\theta_3\wedge\theta_4\wedge\theta_6-X_5(X_1g_4-X_2g_2)\theta_3\wedge\theta_5\wedge\theta_6\\&+X_1(A-\frac{X_3g_5}{2})\theta_1\wedge\theta_3\wedge\theta_4+X_2(A-\frac{X_3g_5}{2})\theta_2\wedge\theta_3\wedge\theta_4+X_5(A-\frac{X_3g_5}{2})\theta_3\wedge\theta_4\wedge\theta_5+\\&+X_6(A-\frac{X_3g_5}{2})\theta_3\wedge\theta_4\wedge\theta_6+X_1[X_1(X_1g_3-X_2g_1)-X_5g_5-X_3g_1]\theta_1\wedge\theta_4\wedge\theta_5+\\&+X_2[X_1(X_1g_3-X_2g_1)-X_5g_5-X_3g_1]\theta_2\wedge\theta_4\wedge\theta_5+\\&+X_3[X_1(X_1g_3-X_2g_1)-X_5g_5-X_3g_1]\theta_3\wedge\theta_4\wedge\theta_5+\\&+X_6[X_1(X_1g_3-X_2g_1)-X_5g_5-X_3g_1]\theta_4\wedge\theta_5\wedge\theta_6+\\&+X_1[X_1(X_1g_4-X_2g_2)-X_3g_2-X_6g_5]\theta_1\wedge\theta_4\wedge\theta_6+\\&+X_2[X_1(X_1g_4-X_2g_2)-X_3g_2-X_6g_5]\theta_2\wedge\theta_4\wedge\theta_6+\\&
    +X_3[X_1(X_1g_4-X_2g_2)-X_3g_2-X_6g_5]\theta_3\wedge\theta_4\wedge\theta_6+\\&-X_5[X_1(X_1g_4-X_2g_2)-X_3g_2-X_6g_5]\theta_4\wedge\theta_5\wedge\theta_6+\\&-X_2(X_6g_1-X_5g_2+X_1g_5)\theta_1\wedge\theta_2\wedge\theta_4-X_3(X_6g_1-X_5g_2+X_1g_5)\theta_1\wedge\theta_3\wedge\theta_4+\\&+X_5(X_6g_1-X_5g_2+X_1g_5)\theta_1\wedge\theta_4\wedge\theta_5+X_6(X_6g_1-X_5g_2+X_1g_5)\theta_1\wedge\theta_4\wedge\theta_6+\\&-\frac{X_1(X_6g_3+X_2g_5-X_5g_4-X_1g_6-X_2g_5)}{2}\theta_1\wedge\theta_2\wedge\theta_4+\end{align*}
    \begin{align*}
        \phantom{ d\Pi_E\beta=}&
    +\frac{X_3(X_6g_3+X_2g_5-X_5g_4-X_1g_6-X_2g_5)}{2}\theta_2\wedge\theta_3\wedge\theta_4+\phantom{blablablalallalablblblblbblbl}\\&-\frac{X_5(X_6g_3+X_2g_5-X_5g_4-X_1g_6-X_2g_5)}{2}\theta_2\wedge\theta_4\wedge\theta_5+\\&-\frac{X_6(X_6g_3+X_2g_5-X_5g_4-X_1g_6-X_2g_5)}{2}\theta_2\wedge\theta_4\wedge\theta_6
    \end{align*}

\begin{itemize}
    \item before finishing the computations, we will study the action of the projection $\Pi_{E_0}=Id-d_\mathfrak{g}^{-1}d_\mathfrak{g}-d_\mathfrak{g}d_\mathfrak{g}^{-1}$ on each left-invariant 3-form:
    \begin{itemize}
        \item $\Pi_{E_0}(\theta_1\wedge\theta_2\wedge\theta_3)=\theta_1\wedge\theta_2\wedge\theta_3-d_\mathfrak{g}^{-1}d_\mathfrak{g}(\theta_1\wedge\theta_2\wedge\theta_3)-d_\mathfrak{g}d_\mathfrak{g}^{-1}(\theta_1\wedge\theta_2\wedge\theta_3)=\frac{\theta_1\wedge\theta_2\wedge\theta_3+\theta_2\wedge\theta_5\wedge\theta_6}{2}$;
        \item $\Pi_{E_0}(\theta_1\wedge\theta_2\wedge\theta_4)=\theta_1\wedge\theta_2\wedge\theta_4-d_\mathfrak{g}^{-1}d_\mathfrak{g}(\theta_1\wedge\theta_2\wedge\theta_4)-d_\mathfrak{g}d_\mathfrak{g}^{-1}(\theta_1\wedge\theta_2\wedge\theta_4)=0$;
        \item $\Pi_{E_0}(\theta_1\wedge\theta_2\wedge\theta_5)=\theta_1\wedge\theta_2\wedge\theta_5-d_\mathfrak{g}^{-1}d_\mathfrak{g}(\theta_1\wedge\theta_2\wedge\theta_5)-d_\mathfrak{g}d_\mathfrak{g}^{-1}(\theta_1\wedge\theta_2\wedge\theta_5)=0$;
        \item $\Pi_{E_0}(\theta_1\wedge\theta_2\wedge\theta_6)=\theta_1\wedge\theta_2\wedge\theta_6-d_\mathfrak{g}^{-1}d_\mathfrak{g}(\theta_1\wedge\theta_2\wedge\theta_6)-d_\mathfrak{g}d_\mathfrak{g}^{-1}(\theta_1\wedge\theta_2\wedge\theta_6)=0$;
        \item $\Pi_{E_0}(\theta_1\wedge\theta_3\wedge\theta_4)=\theta_1\wedge\theta_3\wedge\theta_4-d_\mathfrak{g}^{-1}d_\mathfrak{g}(\theta_1\wedge\theta_3\wedge\theta_4)-d_\mathfrak{g}d_\mathfrak{g}^{-1}(\theta_1\wedge\theta_3\wedge\theta_4)=\frac{\theta_1\wedge\theta_3\wedge\theta_4-\theta_4\wedge\theta_5\wedge\theta_6}{2}$;
         \item $\Pi_{E_0}(\theta_1\wedge\theta_3\wedge\theta_5)=\theta_1\wedge\theta_3\wedge\theta_5-d_\mathfrak{g}^{-1}d_\mathfrak{g}(\theta_1\wedge\theta_3\wedge\theta_5)-d_\mathfrak{g}d_\mathfrak{g}^{-1}(\theta_1\wedge\theta_3\wedge\theta_5)=0$;
          \item $\Pi_{E_0}(\theta_1\wedge\theta_3\wedge\theta_6)=\theta_1\wedge\theta_3\wedge\theta_6-d_\mathfrak{g}^{-1}d_\mathfrak{g}(\theta_1\wedge\theta_3\wedge\theta_6)-d_\mathfrak{g}d_\mathfrak{g}^{-1}(\theta_1\wedge\theta_3\wedge\theta_6)=0$;
        \item $\Pi_{E_0}(\theta_1\wedge\theta_4\wedge\theta_5)=\theta_1\wedge\theta_4\wedge\theta_5-d_\mathfrak{g}^{-1}d_\mathfrak{g}(\theta_1\wedge\theta_4\wedge\theta_5)-d_\mathfrak{g}d_\mathfrak{g}^{-1}(\theta_1\wedge\theta_4\wedge\theta_5)=\theta_1\wedge\theta_4\wedge\theta_5$;
          \item $\Pi_{E_0}(\theta_1\wedge\theta_4\wedge\theta_6)=\theta_1\wedge\theta_4\wedge\theta_6-d_\mathfrak{g}^{-1}d_\mathfrak{g}(\theta_1\wedge\theta_4\wedge\theta_6)-d_\mathfrak{g}d_\mathfrak{g}^{-1}(\theta_1\wedge\theta_4\wedge\theta_6)=\theta_1\wedge\theta_4\wedge\theta_6$;
         \item $\Pi_{E_0}(\theta_1\wedge\theta_5\wedge\theta_6)=\theta_1\wedge\theta_5\wedge\theta_6-d_\mathfrak{g}^{-1}d_\mathfrak{g}(\theta_1\wedge\theta_5\wedge\theta_6)-d_\mathfrak{g}d_\mathfrak{g}^{-1}(\theta_1\wedge\theta_5\wedge\theta_6)=0$;
           \item $\Pi_{E_0}(\theta_2\wedge\theta_3\wedge\theta_4)=\theta_2\wedge\theta_3\wedge\theta_4-d_\mathfrak{g}^{-1}d_\mathfrak{g}(\theta_2\wedge\theta_3\wedge\theta_4)-d_\mathfrak{g}d_\mathfrak{g}^{-1}(\theta_2\wedge\theta_3\wedge\theta_4)=0$;
            \item $\Pi_{E_0}(\theta_2\wedge\theta_3\wedge\theta_5)=\theta_2\wedge\theta_3\wedge\theta_5-d_\mathfrak{g}^{-1}d_\mathfrak{g}(\theta_2\wedge\theta_3\wedge\theta_5)-d_\mathfrak{g}d_\mathfrak{g}^{-1}(\theta_2\wedge\theta_3\wedge\theta_5)=\theta_2\wedge\theta_3\wedge\theta_5$;
             \item $\Pi_{E_0}(\theta_2\wedge\theta_3\wedge\theta_6)=\theta_2\wedge\theta_3\wedge\theta_6-d_\mathfrak{g}^{-1}d_\mathfrak{g}(\theta_2\wedge\theta_3\wedge\theta_6)-d_\mathfrak{g}d_\mathfrak{g}^{-1}(\theta_2\wedge\theta_3\wedge\theta_6)=\theta_2\wedge\theta_3\wedge\theta_6$;
           \item $\Pi_{E_0}(\theta_2\wedge\theta_4\wedge\theta_5)=\theta_2\wedge\theta_4\wedge\theta_5-d_\mathfrak{g}^{-1}d_\mathfrak{g}(\theta_2\wedge\theta_4\wedge\theta_5)-d_\mathfrak{g}d_\mathfrak{g}^{-1}(\theta_2\wedge\theta_4\wedge\theta_5)=0$;
            \item $\Pi_{E_0}(\theta_2\wedge\theta_4\wedge\theta_6)=\theta_2\wedge\theta_4\wedge\theta_6-d_\mathfrak{g}^{-1}d_\mathfrak{g}(\theta_2\wedge\theta_4\wedge\theta_6)-d_\mathfrak{g}d_\mathfrak{g}^{-1}(\theta_2\wedge\theta_4\wedge\theta_6)=0$;
           \item $\Pi_{E_0}(\theta_2\wedge\theta_5\wedge\theta_6)=\theta_2\wedge\theta_5\wedge\theta_6-d_\mathfrak{g}^{-1}d_\mathfrak{g}(\theta_2\wedge\theta_5\wedge\theta_6)-d_\mathfrak{g}d_\mathfrak{g}^{-1}(\theta_2\wedge\theta_5\wedge\theta_6)=\frac{\theta_1\wedge\theta_2\wedge\theta_3+\theta_2\wedge\theta_5\wedge\theta_6}{2}$;
           \item $\Pi_{E_0}(\theta_3\wedge\theta_4\wedge\theta_5)=\theta_3\wedge\theta_4\wedge\theta_5-d_\mathfrak{g}^{-1}d_\mathfrak{g}(\theta_3\wedge\theta_4\wedge\theta_5)-d_\mathfrak{g}d_\mathfrak{g}^{-1}(\theta_3\wedge\theta_4\wedge\theta_5)=0$;
           \item $\Pi_{E_0}(\theta_3\wedge\theta_4\wedge\theta_6)=\theta_3\wedge\theta_4\wedge\theta_6-d_\mathfrak{g}^{-1}d_\mathfrak{g}(\theta_3\wedge\theta_4\wedge\theta_6)-d_\mathfrak{g}d_\mathfrak{g}^{-1}(\theta_3\wedge\theta_4\wedge\theta_6)=0$;
           \item $\Pi_{E_0}(\theta_3\wedge\theta_5\wedge\theta_6)=\theta_3\wedge\theta_5\wedge\theta_6-d_\mathfrak{g}^{-1}d_\mathfrak{g}(\theta_3\wedge\theta_5\wedge\theta_6)-d_\mathfrak{g}d_\mathfrak{g}^{-1}(\theta_3\wedge\theta_5\wedge\theta_6)=0$;
           \item $\Pi_{E_0}(\theta_4\wedge\theta_5\wedge\theta_6)=\theta_4\wedge\theta_5\wedge\theta_6-d_\mathfrak{g}^{-1}d_\mathfrak{g}(\theta_4\wedge\theta_5\wedge\theta_6)-d_\mathfrak{g}d_\mathfrak{g}^{-1}(\theta_4\wedge\theta_5\wedge\theta_6)=-\frac{\theta_1\wedge\theta_3\wedge\theta_4-\theta_4\wedge\theta_5\wedge\theta_6}{2}$.
    \end{itemize}
    \item Finally, using all the previous computations, we can express explicitly the differential operator $d_c\beta=\Pi_{E_0}d\Pi_E\beta$, so that
    \begin{align*}
        d_c\beta=&d_c(g_1\theta_1\wedge\theta_5+g_2\theta_1\wedge\theta_6+g_3\theta_2\wedge\theta_5+g_4\theta_2\wedge\theta_6+g_5(\theta_5\wedge\theta_6-\theta_1\wedge\theta_3)+g_6\theta_2\wedge\theta_3)\\
        =&-X_4g_1\theta_1\wedge\theta_4\wedge\theta_5-X_4g_2\theta_1\wedge\theta_4\wedge\theta_6-X_3g_3\theta_2\wedge\theta_3\wedge\theta_5-X_3g_4\theta_2\wedge\theta_3\wedge\theta_6+\\&+(X_6g_3-X_5g_4+X_2g_5)\frac{\theta_1\wedge\theta_2\wedge\theta_3+\theta_2\wedge\theta_5\wedge\theta_6}{2}-X_4g_5\frac{\theta_1\wedge\theta_3\wedge\theta_4-\theta_4\wedge\theta_5\wedge\theta_6}{2}+\\&+(X_2g_5+X_1g_6)\frac{\theta_1\wedge\theta_2\wedge\theta_3+\theta_2\wedge\theta_5\wedge\theta_6}{2}-X_4g_5\frac{\theta_1\wedge\theta_3\wedge\theta_4-\theta_4\wedge\theta_5\wedge\theta_6}{2}+\\&+X_5g_6\theta_2\wedge\theta_3\wedge\theta_5+X_6g_6\theta_2\wedge\theta_3\wedge\theta_6+X_2(X_1g_3-X_2g_1)\theta_2\wedge\theta_3\wedge\theta_5+\\&+X_2(X_1g_4-X_2g_2)\theta_2\wedge\theta_3\wedge\theta_6+X_1(A-\frac{X_3g_5}{2})\frac{\theta_1\wedge\theta_3\wedge\theta_4-\theta_4\wedge\theta_5\wedge\theta_6}{2}+\\&+X_1[X_1(X_1g_3-X_2g_1)-X_5g_5-X_3g_1]\theta_1\wedge\theta_4\wedge\theta_5+\\&-X_6[X_1(X_1g_3-X_2g_1)-X_5g_5-X_3g_1]\frac{\theta_1\wedge\theta_3\wedge\theta_4-\theta_4\wedge\theta_5\wedge\theta_6}{2}+\\&+X_1[X_1(X_1g_4-X_2g_2)-X_3g_2-X_6g_5]\theta_1\wedge\theta_4\wedge\theta_6+\\&+X_5[X_1(X_1g_4-X_2g_2)-X_3g_2-X_6g_5]\frac{\theta_1\wedge\theta_3\wedge\theta_4-\theta_4\wedge\theta_5\wedge\theta_6}{2}+\\&-X_3(X_6g_1-X_5g_2+X_1g_5)\frac{\theta_1\wedge\theta_3\wedge\theta_4-\theta_4\wedge\theta_5\wedge\theta_6}{2}+\\&+X_5(X_6g_1-X_5g_2+X_1g_5)\theta_1\wedge\theta_4\wedge\theta_5+X_6(X_6g_1-X_5g_2+X_1g_5)\theta_1\wedge\theta_4\wedge\theta_6\end{align*}
    \begin{align*}
        \phantom{ =}
        =&(X_6g_3-X_5g_4+X_2g_5+X_2g_5+X_1g_6)\frac{\theta_1\wedge\theta_2\wedge\theta_3+\theta_2\wedge\theta_5\wedge\theta_6}{2}+\\&+[X_5g_6+X_1(X_1g_3-X_2g_1)-X_3g_3]\theta_2\wedge\theta_3\wedge\theta_5+\\&+[X_6g_6+X_2(X_1g_4-X_2g_2)-X_3g_4]\theta_2\wedge\theta_3\wedge\theta_6+\\&+[X_1^2(X_1g_3-X_2g_1)-X_1X_5g_5-X_1X_3g_1-X_4g_1+X_5(X_6g_1-X_5g_2+X_1g_5)]\theta_1\wedge\theta_4\wedge\theta_5+\\&+[X_1^2(X_1g_4-X_2g_2)-X_1X_3g_2-X_1X_6g_5-X_4g_2+X_6(X_6g_1-X_5g_2+X_1g_5)]\theta_1\wedge\theta_4\wedge\theta_6+\\&+[X_1(A-\frac{X_3g_5}{2})-X_4g_5-X_4g_5-X_6X_1(X_1g_3-X_2g_1)-X_6X_5g_5-X_6X_3g_1+\\&+X_5X_1(X_1g_4-X_2g_2)-X_5X_3g_2-X_5X_6g_5-X_3(X_6g_1-X_5g_2+X_1g_5)]\\&\frac{\theta_1\wedge\theta_3\wedge\theta_4-\theta_4\wedge\theta_5\wedge\theta_6}{2}.
    \end{align*}
\end{itemize}

% We can express the differential $d_c\colon E_0^2\to E_0^3$ in matrix form. If we consider the ordered bases $\lbrace \theta_1\wedge\theta_5, \theta_1\wedge\theta_6,\theta_2\wedge\theta_5, \theta_2\wedge\theta_6,\theta_5\wedge\theta_6-\theta_1\wedge\theta_3, \theta_2\wedge\theta_3\rbrace$ for $E_0^2$ and $\lbrace \theta_2\wedge\theta_5\wedge\theta_6+\theta_1\wedge\theta_2\wedge\theta_3,\theta_2\wedge\theta_3\wedge\theta_5,\theta_2\wedge\theta_3\wedge\theta_6,\theta_1\wedge\theta_4\wedge\theta_5,\theta_1\wedge\theta_4\wedge\theta_6,\theta_1\wedge\theta_3\wedge\theta_4-\theta_4\wedge\theta_5\wedge\theta_6\rbrace$ for $E_0^3$, we have
% \begin{align*}
%     \begin{bmatrix}
%     X_6 & -X_5 & 0 & 0 & X_1 & 0 & 0\\
%     0 & 0 & X_6 & -X_4 & X_2 & 0 & 0\\
%     -X_2^2 & 0 & X_2X_1-X_3 & 0 & 0 & X_5 & 0\\
%      0 & -X_2^2 & 0 & X_2X_1-X_3 & 0 & X_6 & 0\\
%     -X_1^2X_2-X_1X_3-X_4 & 0 & X_1^3 & 0 & 0 & 0 & X_5\\
%     0 & -X_1^2X_2-X_1X_3-X_4 & 0 & X_1^3 & 0 & 0 & X_6\\
%     0 & 0 & 0 & 0 & 0 & X_1^3 & -X_1X_2-X_3\\
%      0 & 0 & 0 & 0 & 0 & X_4+X_2X_1^2 & -X_2^2
%     \end{bmatrix}\,.
% \end{align*}

\section{Applications to $\ell^{q,p}$ cohomology}

By definition, the $\ell^{q,p}$ cohomology of a Riemannian manifold with bounded geometry is the $\ell^{q,p}$ cohomology of every bounded geometry simplicial complex quasi-isometric to it. A crucial result of \cite{pansu-rumin} shows that in the case of manifolds with bounded geometry, and in particular contractible Lie groups, the $\ell^{q,p}$ cohomology of the manifold is isomorphic to its $L^{q,p}$ cohomology.

\begin{defin}
Let $G$ be a nilpotent Lie group, then the $L^{q,p}$ cohomology is defined as
\begin{align*}
    L^{q,p}H^\bullet(G)=\frac{\lbrace \text{closed forms in }L^p\rbrace}{d\big(\lbrace \text{forms in }L^q\rbrace\big)\cap L^p}\,.
\end{align*}
\end{defin}

In \cite{pansu-rumin} it is shown that the Rumin complex constructed on a Carnot group enables sharper estimates for vanishing and non-vanishing results of $L^{q,p}H^\bullet(G)$, and hence of $\ell^{q,p}H^\bullet(G)$. In particular the following theorem is presented in \cite{pansu-rumin}:

\vspace{0.3cm}
\textbf{Theorem 1.1} \textit{Let $G$ be a Carnot group of dimension $n$ and homogeneous dimension $Q$ and take $k=1,\ldots,n$. Let us define the numbers $\delta N_{max}(k)$ and $\delta N_{min}(k)$ as follows:}
\begin{align*}
    \delta N_{max}(k)=&\textit{maximal weight of Rumin k-forms}-\textit{minimal weight of Rumin ($k-1$)-forms}\,;\\
    \delta N_{min}(k)=&\textit{minimal weight of Rumin k-forms}-\textit{maximal weight of Rumin ($k-1$)-forms}\,;
\end{align*}
\textit{Then the following holds:}
\begin{itemize}
    \item [\textit{i.}] \textit{if}
    \begin{align*}
        1<p,q<\infty\textit{ and } \frac{1}{p}-\frac{1}{q}\ge\frac{\delta N_{max}(k)}{Q}
    \end{align*}
    \textit{then the $L^{q,p}$ cohomology of $G$ in degree $k$ vanishes.}
     \item [\textit{ii.}] \textit{if}
    \begin{align*}
        1\le p,q\le\infty\textit{ and } \frac{1}{p}-\frac{1}{q}<\frac{\delta N_{min}(k)}{Q}
    \end{align*}
    \textit{then the $L^{q,p}$ cohomology of $G$ in degree $k$ does not vanish.}
\end{itemize}
\vspace{0.3cm}

It is also shown in Theorem 9.2 of \cite{pansu-rumin} how this final non-vanishing statement has a wider scope, as it holds in particular on Carnot groups equipped with a homogeneous structure that gives only a positive grading. 

Since it is now possible to construct the Rumin complex on arbitrary nilpotent Lie groups, we are able to directly apply Theorem 9.2 in \cite{pansu-rumin} to arbitrary homogeneous nilpotent Lie groups that only admit positive gradings (and are not stratifiable).
\begin{prop}\label{teoteo}\textbf{Non-vanishing result for positively gradable nilpotent Lie groups}\\
Let $G$ be a homogeneous Lie group with homogeneous dimension $T$, then $\ell^{q,p}H^k(G)\neq 0$ provided
\begin{align}\label{proposizione}
    1\le p,q\le\infty\text{ and  }\,\frac{1}{p}-\frac{1}{q}<\frac{\delta N_{min}(k)}{T}\,,
\end{align}
where this time $\delta N_{min}(k)$ is computed using the weights given by the positive grading considered.
\end{prop}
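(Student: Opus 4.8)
The plan is to present this as a direct application of the analytic machinery of \cite{pansu-rumin} (their Theorem 9.2), with the one genuinely new ingredient being the existence of the Rumin complex on $G$, which the construction carried out in the previous sections has now established for \emph{every} nilpotent Lie group. I would first record the structural input: a homogeneous Lie group $G$ carries, by definition, a positive grading $\mathfrak{g}=\bigoplus_{t>0}V_t$ together with the associated one-parameter family of dilations $\delta_\lambda$, and its homogeneous dimension is $T=\sum_t t\,\dim V_t$. Using this grading (rather than the lower central series) I would assign weights to covectors by $w(\theta)=t$ for $\theta\in V_t^\ast$ and extend multiplicatively to $\Lambda^\bullet\mathfrak{g}^\ast$, exactly as in \cite{rumin2000around}; these are the weights with which $\delta N_{min}(k)$ is to be computed.

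Next I would observe that the construction of $(E_0^\bullet,d_c)$ is insensitive to the choice of weights: the space $E_0^\bullet=Ker\,d_\mathfrak{g}\cap(Im\,d_\mathfrak{g})^\perp$ is purely algebraic, and $d_c=\Pi_{E_0}d\Pi_E$ depends only on $d$, on the algebraic operator $d_\mathfrak{g}$, and on the chosen metric. The weights enter only to guarantee that $D=d_\mathfrak{g}^{-1}(d-d_\mathfrak{g})$ is nilpotent, so that $P=\sum_{k=0}^N(-D)^k$ terminates. Repeating the argument of Corollary \ref{operator D} with the positive-grading weights in place of the asymptotic ones shows that $D$ strictly increases the positive-grading weight as well, so the same operator $d_c$ is obtained and is compatible with the grading.

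The key step is then to verify the homogeneity that Theorem 9.2 of \cite{pansu-rumin} requires: choosing the metric so that the grading layers $V_t$ are orthogonal and $\delta_\lambda$ acts as $\lambda^t$ on $V_t^\ast$, one checks that $d_\mathfrak{g}$, $d_\mathfrak{g}^{-1}$, and hence each weight-graded piece of $d_c$, are homogeneous of the expected degree under $\delta_\lambda$. Granting this, Theorem 9.2 applies verbatim to $(E_0^\bullet,d_c)$ and yields $L^{q,p}H^k(G)\neq 0$ whenever $1\le p,q\le\infty$ and $\frac{1}{p}-\frac{1}{q}<\frac{\delta N_{min}(k)}{T}$, with $\delta N_{min}(k)$ read off from the positive-grading weights of Rumin forms. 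Finally, invoking the identification $L^{q,p}H^\bullet(G)\cong\ell^{q,p}H^\bullet(G)$ of \cite{pansu-rumin}, valid for contractible Lie groups of bounded geometry, transfers the non-vanishing to $\ell^{q,p}H^k(G)$.

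I expect the main obstacle to be precisely the homogeneity check in the third step: one must ensure that the metric-dependent operator $d_c$, which in this paper is organized via the lower central series, genuinely decomposes into pieces homogeneous under the dilations of the \emph{chosen} positive grading, and that the minimal and maximal weights of Rumin $k$-forms entering $\delta N_{min}(k)$ are computed consistently with that grading. Once a grading-adapted metric is fixed this becomes scaling bookkeeping rather than a new analytic difficulty, which is why the statement can fairly be presented as a direct application of \cite{pansu-rumin}.
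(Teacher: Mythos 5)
Your proposal is correct and takes essentially the same route as the paper: the paper offers no formal proof of Proposition \ref{teoteo} at all, merely observing that once the Rumin complex $(E_0^\bullet,d_c)$ exists on an arbitrary nilpotent Lie group, Theorem 9.2 of \cite{pansu-rumin} applies directly and the identification $L^{q,p}H^\bullet(G)\cong\ell^{q,p}H^\bullet(G)$ for contractible Lie groups of bounded geometry transfers the non-vanishing to $\ell^{q,p}$ cohomology. The checks you flag as the real content --- that with a grading-adapted metric the algebraic differential $d_\mathfrak{g}$ preserves the grading weight, that $D=d_\mathfrak{g}^{-1}(d-d_\mathfrak{g})$ is still nilpotent with respect to those weights, and that $d_c$ therefore splits into pieces homogeneous under the dilations $\delta_\lambda$ --- are exactly the details the paper leaves implicit, and you carry them out correctly.
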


\begin{ese}
Let us take into consideration the nilpotent Lie group $G$ whose Lie algebra is denoted as $N_{6,3,2}$ in \cite{Gong_Thesis} (see Section \ref{Esempio non-Carnot}), whose non-trivial brackets are the following:
\begin{align*}
    [X_1,X_2]=X_3\;,\;[X_1,X_3]=[X_5,X_6]=X_4\;.
\end{align*}

This time we are not interested in the asymptotic weights of forms. On the contrary, since this Lie group is positively gradable, we are interested in those weights that stem from the homogeneous structure given by the positive grading.

We will now consider the following two positive gradings:
\begin{align*}
    \mathcal{V}^1&=\lbrace V^1_i\rbrace_{i=1}^4\text{ with } V_1^1=\span\lbrace X_1\rbrace\,,\,V_2^1=\span\lbrace X_2,X_5,X_6\rbrace\,,\,V^1_3=\span\lbrace X_3\rbrace\,,\,V_4^1=\span\lbrace X_4\rbrace\,;\\
    \mathcal{V}^2&=\lbrace V^2_i\rbrace_{i=1}^3\text{ with } V_1^2=\span\lbrace X_1,X_2,X_5\rbrace\,,\,V_2^2=\span\lbrace X_3,X_6\rbrace\,,\,V^2_3=\span\lbrace X_4\rbrace\,.
\end{align*}
One can check that these are indeed positive gradings for $N_{6,3,2}$.

Let us study all the different options of weights for 1-forms on $G$:
\begin{center}
 \begin{tabular}{c| c| c| c} 
  & Aymptotic weight& Weight from $\mathcal{V}_1$ & Weight from $\mathcal{V}_2$ \\ [0.5ex] 
 \hline\hline
 $\theta_1$ & 1 & 1 & 1 \\ 
 \hline
 $\theta_2$ & 1 & 2 & 1 \\
 \hline
 $\theta_3$ & 2 & 3 & 2 \\
 \hline
 $\theta_4$ & 3 & 4 & 3 \\
 \hline
 $\theta_5$ & 1 & 2 & 1 \\  
 \hline
 $\theta_6$ & 1 & 2 & 2\\
\end{tabular}
\end{center}

Let us apply Proposition \ref{teoteo} to study the $\ell^{q,p}$ cohomology in degree 1 and 2.

The space of Rumin 1-forms is given by the following 1-forms
\begin{align*}
    E_0^1=span_{C^{\infty}(G)}\lbrace \theta_1,\theta_2,\theta_5,\theta_6\rbrace\,;
\end{align*}
whereas the space of Rumin 2-forms is given by
\begin{align*}
    E_0^2=span_{C^\infty(G}\lbrace \theta_5\wedge\theta_6-\theta_1\wedge\theta_3, \theta_1\wedge\theta_5,\theta_1\wedge\theta_6,\theta_2\wedge\theta_3,\theta_2\wedge\theta_5,\theta_2\wedge\theta_6\rbrace\,.
\end{align*}

Therefore, we get
\begin{center}
 \begin{tabular}{c|  c| c} 
  &  Weight from $\mathcal{V}_1$ & Weight from $\mathcal{V}_2$ \\ [0.5ex] 
 \hline\hline
 $\theta_1\wedge\theta_5$  & 3 & 2 \\ 
 \hline
 $\theta_1\wedge\theta_6$  & 3 & 3 \\
 \hline
 $\theta_2\wedge\theta_3$  & 5 & 3 \\
 \hline
 $\theta_2\wedge\theta_5$  & 4 & 2 \\
 \hline
 $\theta_2\wedge\theta_6$  & 4 & 3 \\  
 \hline
 $\theta_1\wedge\theta_3-\theta_5\wedge\theta_6$  & 4 & 3\\
\end{tabular}
\end{center}

Finally, considering the homogeneous dimensions $T_1=14$ and $T_2=10$, we have that using the first grading $\mathcal{V}_1$ we get
\begin{align*}
    \ell^{q,p}H^1(G)\neq 0&\text{ for }\frac{1}{p}-\frac{1}{q}<\frac{1}{14}\,,\\
    \ell^{q,p}H^2(G)\neq 0&\text{ for }\frac{1}{p}-\frac{1}{q}<\frac{1}{14}\,;
\end{align*}
whereas if we use the second grading $\mathcal{V}_2$ we get
\begin{align*}
    \ell^{q,p}H^1(G)\neq 0 &\text{ for }\frac{1}{p}-\frac{1}{q}<\frac{1}{10}\,,\\
    \ell^{q,p}H^2(G)\neq 0&\text{ for }\frac{1}{p}-\frac{1}{q}<0\,.
\end{align*}

Therefore, in degree 1 the second choice of grading $\mathcal{V}_2$ yields a better interval for non-vanishing cohomology, whereas in degree 2 the first grading $\mathcal{V}_1$ gives a better interval instead.
\end{ese}

\newpage

\appendix

\section{An alternative to asymptotic weights}\label{Alternative proof}

The scalar product on $\Lambda^1\mathfrak{g}^\ast$ extends canonically to smooth 1-forms $\Gamma(\Lambda^1\mathfrak{g}^\ast)$, so we can express the space of smooth 1-forms as a direct sum of subspaces using the filtration $\lbrace\Gamma(F_i)\rbrace$:
\begin{align*}
    \Gamma(\mathfrak{g}^\ast)=\Gamma(W_1)\oplus\Gamma(W_2)\oplus\cdots\oplus\Gamma(W_s)\,,
\end{align*}
where $\Gamma(W_i)=\Gamma(F_i)\cap(\Gamma(F_{i-1}))^\perp \;,i\ge 1$.

Let us stress that if $s$ is the nilpotency step, then $\Gamma(F_j)=\Gamma(F_{j+1})=\Gamma(\Lambda^1\mathfrak{g}^\ast)$ for any $j\ge s$, so that $\Gamma(W_j)=0$ for any $j>s$. Moreover, for $1\le j\le s$
\begin{align*}
    \Gamma(F_j)=\Gamma(W_1)\oplus \cdots\oplus\Gamma(W_j)\,.
\end{align*}

Let us now study the filtration $\lbrace \Gamma(\Lambda^2F_i)\rbrace$ on smooth 2-forms:
\begin{itemize}
    \item $\Gamma(\Lambda^2F_0)=0$;
    \item $\Gamma(\Lambda^2F_1)=\Gamma(\Lambda^2W_1)$;
    \item $\Gamma(\Lambda^2F_2)=\Gamma(\Lambda^2W_1)\oplus \Gamma(\Lambda^1W_1)\otimes\Gamma(\Lambda^1W_2)\oplus \Gamma(\Lambda^2W_2)=\Gamma(\Lambda^2F_1)\oplus \Gamma(\Lambda^2F_2)\cap(\Gamma(\Lambda^2F_1))^\perp$;
    \item $\Gamma(\Lambda^2F_3)=\Gamma(\Lambda^2W_1)\oplus \Gamma(\Lambda^1W_1)\otimes \Gamma(\Lambda^1W_2)\oplus\Gamma(\Lambda^2W_2)\oplus \Gamma(\Lambda^1W_1)\otimes \Gamma(\Lambda^1W_3)\oplus \Gamma(\Lambda^1W_2)\otimes \Gamma(\Lambda^1W_3)\oplus\Gamma(\Lambda^2W_3)=\Gamma(\Lambda^2F_1)\oplus\Gamma(\Lambda^2F_2)\cap(\Gamma(\Lambda^2F_1))^\perp\oplus\Gamma(\Lambda^2F_3\cap(\Gamma(\Lambda^2F_2))^\perp$;
    \item $\ldots$
    \item $\Gamma(\Lambda^2F_j)=\Gamma(\Lambda^2W_1)\oplus\Gamma(\Lambda^1W_1)\otimes\Gamma(\Lambda^1W_2)\oplus\Gamma(\Lambda^2W_2)\oplus\cdots\oplus\Gamma(\Lambda^1W_{j-2})\otimes\Gamma(\Lambda^1W_j)\oplus\Gamma(\Lambda^1W_{j-1})\otimes\Gamma(\Lambda^1W_j)\oplus\Gamma(\Lambda^2W_j)=\Gamma(\Lambda^2F_1)\oplus\Gamma(\Lambda^2F_2)\cap(\Gamma(\Lambda^2F_1))^\perp\oplus\cdots\oplus\Gamma(\Lambda^2F_j)\cap(\Gamma(\Lambda^2F_{j-1}))^\perp$;
    \item $\ldots$
    \item $\Gamma(\Lambda^2F_s)=\Gamma(\Lambda^2F_1)\oplus\Gamma(\Lambda^2F_2)\cap(\Gamma(\Lambda^2F_1))^\perp\oplus\cdots\Gamma(\Lambda^2F_s)\cap(\Gamma(\Lambda^2F_{s-1}))^\perp$.
\end{itemize}

One should notice that for $j>1$, we have
\begin{align*}
    \Gamma(\Lambda^2F_j)\cap(\Gamma(\Lambda^2F_{j-1}))^\perp=\bigoplus_{i=1}^{j-1}\Gamma(\Lambda^1W_i)\otimes\Gamma(\Lambda^1W_j)\oplus\Gamma(\Lambda^2W_j)\,,
\end{align*}
and for $k<j-1$
\begin{align*}
    \Gamma(\Lambda^2F_j)\cap(\Gamma(\Lambda^2F_{k}))^\perp=&\bigoplus_{i=0}^{j-k-1}\Gamma(\Lambda^2F_{j-i})\cap(\Gamma(\Lambda^2F_{j-i-1}))^\perp\\=&\bigoplus_{i=0}^{j-k-1}\bigoplus_{l=1}^{j-i-1}\Gamma(\Lambda^1W_l)\wedge\Gamma(\Lambda^1W_{j-i})\oplus\Gamma(\Lambda^2W_{j-i})\,.
\end{align*}

By repeating the same steps as in Definition \ref{defin inverse}, we can again construct the algebraic operator $d_\mathfrak{g}^{-1}$. Moreover, by definition of the filtration $\lbrace F_i\rbrace$, we have indeed that $d_\mathfrak{g}\Gamma(F_i)\subset\Gamma(\Lambda^2F_{i-1})$, hence
\begin{align*}
    d_\mathfrak{g}^{-1}(\Gamma(\Lambda^2F_i)\cap(\Gamma(\Lambda^2F_{i-1}))^\perp)\subset \Gamma(F_s)\cap(\Gamma(F_{i}))^\perp\,.
\end{align*}

Therefore we have the following inclusions:
\begin{itemize}
    \item $(d-d_\mathfrak{g})(\Gamma(W_k))\subset\Gamma(\Lambda^2F_s)\cap(\Gamma(\Lambda^2F_{k-1}))^\perp$;
    \item $d_\mathfrak{g}^{-1}(d-d_\mathfrak{g})(\Gamma(W_k))\subset d_\mathfrak{g}^{-1}\big(\Gamma(\Lambda^2F_s)\cap(\Gamma(\Lambda^2F_{k-1}))^\perp\big)\subset\Gamma(F_s)\cap(\Gamma(F_k))^\perp$;
     \item $(d-d_\mathfrak{g})d_\mathfrak{g}^{-1}(d-d_\mathfrak{g})(\Gamma(W_k)) \subset(d-d_\mathfrak{g})\big(\Gamma(F_s)\cap(\Gamma(F_k))^\perp\big)\subset\Gamma(\Lambda^2F_s)\cap(\Gamma(\Lambda^2F_{k}))^\perp$;
     \item $[d_\mathfrak{g}^{-1}(d-d_\mathfrak{g})]^2(\Gamma(W_k))\subset d_\mathfrak{g}^{-1}\big(\Gamma(\Lambda^2F_s)\cap(\Gamma(\Lambda^2F_{k}))^\perp\big)\subset\Gamma(F_s)\cap(\Gamma(F_{k+1}))^\perp$
\end{itemize}
and so on.

The same reasoning can be applied to arbitrary $k$-forms, so that the nilpotency of the differential operator $D$ defined in Corollary \ref{operator D} follows from the finiteness of the lower central series of $\mathfrak{g}$.

\begin{ese}\textbf{6-dimensional filiform group of the second kind}

Let us consider the 6-dimensional filiform nilpotent Lie group of the second kind whose Lie algebra is given by the following non-trivial Lie brackets:
\begin{align*}
    [X_1, X_2] = X_{3}\;,\;[X_1,X_3]=X_4\;,\;[X_1,X_4]=X_5\;,\;[X_2,X_5]=X_6\;,\;[X_3,X_4]=-X_6\,.
\end{align*}

The nilpotency step is $s=5$.

The action of $d_\mathfrak{g}$ on the left-invariant orthonormal basis $\lbrace \theta_i=X^\ast_i\rbrace_{1\le i\le 6}$ is the following:
\begin{itemize}
    \item $d_\mathfrak{g}\theta_1=d_\mathfrak{g}\theta_2=0$;
    \item $d_\mathfrak{g}\theta_3=-\theta_1\wedge\theta_2$;
    \item $d_\mathfrak{g}\theta_4=-\theta_1\wedge\theta_3$;
    \item $d_\mathfrak{g}\theta_5=-\theta_1\wedge\theta_4$;
    \item $d_\mathfrak{g}\theta_6=-\theta_2\wedge\theta_5+\theta_3\wedge\theta_4$.
\end{itemize}

Let us construct the filtration $\lbrace F_i\rbrace_{i=1}^5$ of 1-forms.
\begin{itemize}
    \item $F_0=0$;
    \item $F_1=span_{\mathbb{R}}\lbrace \theta_1,\theta_2 \rbrace$;
    \item  $F_2=span_{\mathbb{R}}\lbrace \theta_1,\theta_2,\theta_3\rbrace$;
    \item  $F_3=span_{\mathbb{R}}\lbrace \theta_1,\theta_2,\theta_3,\theta_4\rbrace$;
     \item  $F_4=span_{\mathbb{R}}\lbrace \theta_1,\theta_2,\theta_3,\theta_4,\theta_5\rbrace$;
     \item  $F_5=span_{\mathbb{R}}\lbrace \theta_1,\theta_2,\theta_3,\theta_4,\theta_5,\theta_6\rbrace$.
\end{itemize}

Then by definition we have
\begin{itemize}
    \item $W_1=F_1=span_{\mathbb{R}}\lbrace \theta_1,\theta_2 \rbrace$;
    \item $W_2=F_2\cap(F_1)^\perp=span_{\mathbb{R}}\lbrace \theta_3 \rbrace$;
    \item $W_3=F_3\cap(F_2)^\perp=span_{\mathbb{R}}\lbrace \theta_4 \rbrace$;
    \item $W_4=F_4\cap(F_3)^\perp=span_{\mathbb{R}}\lbrace \theta_5 \rbrace$;
    \item $W_5=F_5\cap(F_4)^\perp=span_{\mathbb{R}}\lbrace \theta_6 \rbrace$,
\end{itemize}
and
\begin{itemize}
    \item $\Lambda^2F_1=\Lambda^2W_1=span_{\mathbb{R}}\lbrace \theta_1\wedge \theta_2 \rbrace$;
    \item $\Lambda^2F_2=\Lambda^2W_1\oplus\Lambda^1W_1\otimes \Lambda^1W_2\oplus\Lambda^2W_2=span_{\mathbb{R}}\lbrace \theta_1\wedge \theta_2 ,\theta_1\wedge \theta_3, \theta_2\wedge \theta_3\rbrace$;
    \item $\Lambda^2F_3=\Lambda^2W_1\oplus\Lambda^1W_1\otimes \Lambda^1W_2\oplus\Lambda^2W_2\oplus\Lambda^1W_1\otimes \Lambda^1W_3\oplus\Lambda^1W_2\otimes \Lambda^1W_3\oplus\Lambda^2W_3=span_{\mathbb{R}}\lbrace \theta_1\wedge \theta_2 ,\theta_1\wedge \theta_3, \theta_2\wedge \theta_3, \theta_1\wedge \theta_4,\theta_2\wedge \theta_4, \theta_3\wedge \theta_4\rbrace$;
    \item  $\Lambda^2F_4=\Lambda^2W_1\oplus\Lambda^1W_1\otimes \Lambda^1W_2\oplus\Lambda^2W_2\oplus\Lambda^1W_1\otimes \Lambda^1W_3\oplus\Lambda^1W_2\otimes \Lambda^1W_3\oplus\Lambda^2W_3\oplus\Lambda^1W_1\otimes \Lambda^1W_4\oplus\Lambda^1W_2\otimes\Lambda^1W_4\oplus\Lambda^1W_3\otimes \Lambda^1W_4\oplus\Lambda^2W_4=span_{\mathbb{R}}\lbrace \theta_1\wedge \theta_2 ,\theta_1\wedge \theta_3, \theta_2\wedge \theta_3, \theta_1\wedge \theta_4,\theta_2\wedge \theta_4, \theta_3\wedge \theta_4,\theta_1\wedge \theta_5,\theta_2\wedge \theta_5,\theta_3\wedge \theta_5, \theta_4\wedge \theta_5\rbrace$;
    \item  $\Lambda^2F_5=\Lambda^2W_1\oplus\Lambda^1W_1\otimes \Lambda^1W_2\oplus\Lambda^2W_2\oplus\Lambda^1W_1\otimes \Lambda^1W_3\oplus\Lambda^1W_2\otimes \Lambda^1W_3\oplus\Lambda^2W_3\oplus\Lambda^1W_1\otimes \Lambda^1W_4\oplus\Lambda^1W_2\otimes\Lambda^1W_4\oplus\Lambda^1W_3\otimes \Lambda^1W_4\oplus\Lambda^2W_4\oplus\Lambda^1W_1\otimes \Lambda^1W_5\oplus\Lambda^1W_2\otimes\Lambda^1W_5\oplus\Lambda^1W_3\wedge \Lambda^1W_5\oplus\Lambda^1W_4\otimes\Lambda^1W_5\oplus\Lambda^2W_5=span_{\mathbb{R}}\lbrace \theta_1\wedge \theta_2 ,\theta_1\wedge \theta_3, \theta_2\wedge \theta_3, \theta_1\wedge \theta_4,\theta_2\wedge \theta_4, \theta_3\wedge \theta_4,\theta_1\wedge \theta_5,\theta_2\wedge \theta_5,\theta_3\wedge \theta_5, \theta_4\wedge \theta_5,\theta_1\wedge \theta_6,\theta_2\wedge \theta_6,\theta_3\wedge \theta_6,\theta_4\wedge \theta_6,\theta_5\wedge \theta_6\rbrace=\Lambda^2\mathfrak{g}^\ast$,
\end{itemize}
so that
\begin{itemize}
    \item $\Lambda^2F_1=\Lambda^2W_1=span_{\mathbb{R}}\lbrace \theta_1\wedge \theta_2 \rbrace$;
    \item $\Lambda^2F_2\cap(\Lambda^2F_1)^\perp=\Lambda^1W_1\otimes \Lambda^1W_2\oplus\Lambda^2W_2=span_{\mathbb{R}}\lbrace \theta_1\wedge \theta_3, \theta_2\wedge \theta_3\rbrace$ ($\Lambda^2W_2=0$);
    \item $\Lambda^2F_3\cap(\Lambda^2F_2)^\perp=\Lambda^1W_1\otimes \Lambda^1W_3\oplus\Lambda^1W_2\otimes \Lambda^1W_3\oplus\Lambda^2W_3=span_{\mathbb{R}}\lbrace  \theta_1\wedge \theta_4,\theta_2\wedge \theta_4\rbrace\oplus span_{\mathbb{R}}\lbrace \theta_3\wedge \theta_4\rbrace$ ($\Lambda^2W_3=0$);
    \item  $\Lambda^2F_4\cap(\Lambda^2F_3)^\perp=\Lambda^1W_1\otimes \Lambda^1W_4\oplus\Lambda^1W_2\otimes\Lambda^1W_4\oplus\Lambda^1W_3\otimes \Lambda^1W_4\oplus\Lambda^2W_4=span_{\mathbb{R}}\lbrace \theta_1\wedge \theta_5,\theta_2\wedge \theta_5\rbrace\oplus span_{\mathbb{R}}\lbrace \theta_3\wedge \theta_5\rbrace\oplus span_{\mathbb{R}}\lbrace \theta_4\wedge \theta_5\rbrace$ ($\Lambda^2W_4=0$);
    \item  $\Lambda^2F_5\cap(\Lambda^2F_4)^\perp=\Lambda^1W_1\otimes \Lambda^1W_5\oplus\Lambda^1W_2\otimes\Lambda^1W_5\oplus\Lambda^1W_3\otimes \Lambda^1W_5\oplus\Lambda^1W_4\otimes\Lambda^1W_5\oplus\Lambda^2W_5=span_{\mathbb{R}}\lbrace \theta_1\wedge \theta_6,\theta_2\wedge \theta_6\rbrace\oplus span_{\mathbb{R}}\lbrace \theta_3\wedge \theta_6 \rbrace\oplus span_{\mathbb{R}}\lbrace \theta_4\wedge \theta_6\rbrace\oplus span_{\mathbb{R}}\lbrace \theta_5\wedge \theta_6\rbrace$ ($\Lambda^2W_5=0$).
\end{itemize}

In particular, we have that:
\begin{itemize}
    \item $\theta_3\in W_2=F_2\cap(F_1)^\perp$ and $d_\mathfrak{g}\theta_3=-\theta_1\wedge\theta_2\in\Lambda^2W_1=\Lambda^2F_1$;
    \item $\theta_4\in W_3=F_3\cap(F_2)^\perp$ and $d_\mathfrak{g}\theta_4=-\theta_1\wedge\theta_3\in\Lambda^2 F_2\cap(\Lambda^2F_1)^\perp\subset\Lambda^2F_2$;
    \item $\theta_5\in W_4=F_4\cap(F_3)^\perp$ and $d_\mathfrak{g}\theta_5=-\theta_1\wedge\theta_4\in\Lambda^2F_3\cap(\Lambda^2F_2)^\perp\subset\Lambda^2F_3$;
    \item $\theta_6\in W_5=F_5\cap(F_4)^\perp$ and $d_\mathfrak{g}\theta_6=-\theta_2\wedge\theta_5+\theta_3\wedge\theta_4\subset\Lambda^2F_4\cap(\Lambda^2F_3)^\perp\oplus\Lambda^2F_3\cap(\Lambda^2F_2)^\perp$\,.
\end{itemize}

Given a Rumin 1-form $\alpha\in E_0^1=Ker\,d_\mathfrak{g}\cap(Im\,d_\mathfrak{g})^\perp$, $\alpha=f_1\theta_1+f_2\theta_2\in\Gamma(W_1)$, we have:
\begin{align*}
    d\alpha=&(d-d_\mathfrak{g})\alpha=-X_2f_1\theta_1\wedge\theta_2-X_3f_1\theta_1\wedge\theta_3-X_4f_1\theta_1\wedge\theta_4-X_5f_1\theta_1\wedge\theta_5-X_6f_1\theta_1\wedge\theta_6+\\&+X_1f_2\theta_1\wedge\theta_2-X_3f_2\theta_2\wedge\theta_3-X_4f_2\theta_2\wedge\theta_4-X_5f_2\theta_2\wedge\theta_5-X_6f_2\theta_2\wedge\theta_6\\=&\underbrace{(X_1f_2-X_2f_1)\theta_1\wedge\theta_2}_{\in\Gamma(\Lambda^2F_1)}-\underbrace{X_3f_1\theta_1\wedge\theta_3-X_3f_2\theta_2\wedge\theta_3}_{\in\Gamma(\Lambda^2F_2)\cap(\Gamma(\Lambda^2F_1))^\perp}-\underbrace{X_4f_1\theta_1\wedge\theta_4-X_4f_2\theta_2\wedge\theta_4}_{\in\Gamma(\Lambda^2F_3)\cap(\Gamma(\Lambda^2F_2))^\perp}+\\&-\underbrace{X_5f_1\theta_1\wedge\theta_5-X_5f_2\theta_2\wedge\theta_5}_{\in\Gamma(\Lambda^2F_4)\cap(\Gamma(\Lambda^2F_3))^\perp}-\underbrace{X_6f_1\theta_1\wedge\theta_6-X_6f_2\theta_2\wedge\theta_6}_{\in\Gamma(\Lambda^2F_5)\cap(\Gamma(\Lambda^2F_4))^\perp}\in\Gamma(\Lambda^2F_5)\cap(\Gamma(\Lambda^2F_0))^\perp\,,
\end{align*}
so that
\begin{align*}
    d_\mathfrak{g}^{-1}d\alpha=&-\underbrace{(X_1f_2-X_2f_1)\theta_3}_{\in\Gamma(F_2)\cap(\Gamma(F_1))^\perp}+\underbrace{X_3f_1\theta_4}_{\in\Gamma(F_3)\cap(\Gamma(F_2))^\perp}+\underbrace{X_4f_1\theta_5}_{\in\Gamma(F_4)\cap(\Gamma(F_3))^\perp}+\underbrace{\frac{X_5f_2}{2}\theta_6}_{\in\Gamma(F_5)\cap(\Gamma(F_4))^\perp}\in\Gamma(F_5)\cap(\Gamma(F_1))^\perp\,,
\end{align*}
Furthermore
\begin{align*}
    (d-&d_\mathfrak{g})d_\mathfrak{g}^{-1}d\alpha=-X_1(X_1f_2-X_2f_1)\theta_1\wedge\theta_3-X_2(X_1f_2-X_2f_1)\theta_2\wedge\theta_3+X_4(X_1f_2-X_2f_1)\theta_3\wedge\theta_4+\\&+X_5(X_1f_2-X_2f_1)\theta_3\wedge\theta_5+X_6(X_1f_2-X_2f_1)\theta_3\wedge\theta_6+X_1X_3f_1\theta_1\wedge\theta_4+X_2X_3f_1\theta_2\wedge\theta_4+\\&+X_3^2f_1\theta_3\wedge\theta_4-X_5X_3f_1\theta_4\wedge\theta_5-X_6X_3f_1\theta_4\wedge\theta_6+X_1X_4f_1\theta_1\wedge\theta_5+X_2X_4f_1\theta_2\wedge\theta_5+\\&+X_3X_4f_1\theta_3\wedge\theta_5+X_4^2f_1\theta_4\wedge\theta_5-X_6X_4f_1\theta_5\wedge\theta_6+\frac{X_1X_5f_2}{2}\theta_1\wedge\theta_6+\frac{X_2X_5f_2}{2}\theta_2\wedge\theta_6+\\&+\frac{X_3X_5f_2}{2}\theta_3\wedge\theta_6+\frac{X_4X_5f_2}{2}\theta_4\wedge\theta_6+\frac{X_5^2f_2}{2}\theta_5\wedge\theta_6\\=&\underbrace{-X_1(X_1f_2-X_2f_1)\theta_1\wedge\theta_3-X_2(X_1f_2-X_2f_1)\theta_2\wedge\theta_3}_{\in\Gamma(\Lambda^2F_2)\cap(\Gamma(\Lambda^2F_1))^\perp}+\\
    &+\underbrace{X_1X_3f_1\theta_1\wedge\theta_4+X_2X_3f_1\theta_2\wedge\theta_4+[X_4(X_1f_2-X_2f_1)+X_3^2f_1]\theta_3\wedge\theta_4}_{\in\Gamma(\Lambda^2F_3)\cap(\Gamma(\Lambda^2F_2))^\perp}+\\
    &\underbrace{+X_1X_4f_1\theta_1\wedge\theta_5+X_2X_4f_1\theta_2\wedge\theta_5+[X_5(X_1f_2-X_2f_1)+X_3X_4f_1]\theta_3\wedge\theta_5+(X_4^2f_1-X_5X_3f_1)\theta_4\wedge\theta_5}_{\in\Gamma(\Lambda^2F_4)\cap(\Gamma(\Lambda^2F_3))^\perp}+\\
    &+\frac{X_1X_5f_2}{2}\theta_1\wedge\theta_6+\frac{X_2X_5f_2}{2}\theta_2\wedge\theta_6+[\frac{X_3X_5f_2}{2}+X_6(X_1f_2-X_2f_1)]\theta_3\wedge\theta_6+\\&+[\frac{X_4X_5f_2}{2}-X_6X_3f_1]\theta_4\wedge\theta_6+[\frac{X_5^2f_2}{2}-X_6X_4f_1]\theta_5\wedge\theta_6\in\Gamma(\Lambda^2F_5)\cap(\Gamma(\Lambda^2F_1))^\perp
\end{align*}
and
\begin{align*}
    d_\mathfrak{g}^{-1}(d-d_\mathfrak{g})d_\mathfrak{g}^{-1}d\alpha=&\underbrace{X_1(X_1f_2-X_2f_1)\theta_4}_{\in\Gamma(F_3)\cap(\Gamma(F_2))^\perp}-\underbrace{X_1X_3f_1\theta_5}_{\in\Gamma(F_4)\cap(\Gamma(F_3))^\perp}+\underbrace{\frac{X_4(X_1f_2-X_2f_1)+X_3^2f_1}{2}\theta_6}_{\in\Gamma(F_5)\cap(\Gamma(F_4))^\perp}+\\&-\underbrace{\frac{X_2X_4f_1}{2}\theta_6}_{\in\Gamma(F_5)\cap(\Gamma(F_4))^\perp}\in\Gamma(F_5)\cap(\Gamma(F_2))^\perp
\end{align*}
Then
\begin{align*}
    [d_\mathfrak{g}^{-1}(d-d_\mathfrak{g})]^2d_\mathfrak{g}^{-1}d\alpha=&d_\mathfrak{g}^{-1}\big[X_1^2(X_1f_2-X_2f_1)\theta_1\wedge\theta_4+X_3X_1(X_1f_2-X_2f_1)\theta_3\wedge\theta_4+\\&-X_2X_1X_3f_1\theta_2\wedge\theta_5+(Im\,d_\mathfrak{g})^\perp\big]\\=-X_1^2(X_1&f_2-X_2f_1)\theta_5+\frac{X_3X_1(X_1f_2-X_2f_1)+X_2X_1X_3f_1}{2}\theta_6\in\Gamma(F_5)\cap(\Gamma(F_3))^\perp\,,\\
    [d_\mathfrak{g}^{-1}(d-d_\mathfrak{g})]^3d_\mathfrak{g}^{-1}d\alpha=&d_\mathfrak{g}^{-1}\big[-X_2X_1^2(X_1f_2-X_2f_1)\theta_2\wedge\theta_5+(Im\,d_\mathfrak{g})^\perp\big]\\=&\frac{X_2X_1^2(X_1f_2-X_2f_1)}{2}\theta_6\in\Gamma(F_5)\cap(\Gamma(F_4))^\perp\,,
\end{align*}
and finally
\begin{align*}
    [d_\mathfrak{g}^{-1}(d-d_\mathfrak{g})]^4d_\mathfrak{g}^{-1}d\alpha\in\Gamma(F_5)\cap(\Gamma(F_5))^\perp=0\;\Rightarrow \;[d_\mathfrak{g}^{-1}(d-d_\mathfrak{g})]^4d_\mathfrak{g}^{-1}d\alpha=0\,.
\end{align*}

\end{ese}

\section*{Acknowledgements}

The author is supported by the University of Bologna, funds for selected research topics, and by the European Union's Horizon 2020 research and innovation programme under the Marie Sk\l{}odowska-Curie grant agreement No 777822 GHAIA (`\emph{Geometric and Harmonic Analysis with Interdisciplinary Applications}').

The author would like to thank Prof. Pierre Pansu and Prof. Bruno Franchi for their support during the preparation
of the paper.

\bibliographystyle{plain}
\bibliography{biblio.bib}

\bigskip
\vspace{0.5cm}
% \tiny{
% \noindent
% Pierre Pansu 
% \par\noindent Laboratoire de Math\'ematiques d'Orsay,
% \par\noindent Universit\'e Paris-Sud, CNRS,
% \par\noindent Universit\'e
% Paris-Saclay, 91405 Orsay, France.
% \par\noindent
% e-mail: pierre.pansu@universite-paris-saclay.fr\newline
% }

\tiny{
\par\noindent
Francesca Tripaldi 
\par\noindent Department of Mathematics,
\par\noindent Bologna University,
\par\noindent 40126, Bologna, Italy.
\par\noindent email: francesca.tripaldi2@unibo.it
}

\end{document}